\providecommand{\tabularnewline}{\\}
\numberwithin{equation}{section}
\numberwithin{figure}{section}
\theoremstyle{plain}
\newtheorem{thm}{\protect\theoremname}
  \theoremstyle{plain}
  \newtheorem{lem}[thm]{\protect\lemmaname}
  \theoremstyle{plain}
  \newtheorem{prop}[thm]{\protect\propositionname}
  \theoremstyle{remark}
  \newtheorem{rem}[thm]{\protect\remarkname}
  \theoremstyle{plain}
  \newtheorem{cor}[thm]{\protect\corollaryname}
  \theoremstyle{plain}
  \newtheorem{conjecture}[thm]{\protect\conjecturename}
  \providecommand{\conjecturename}{Conjecture}
  \providecommand{\corollaryname}{Corollary}
  \providecommand{\lemmaname}{Lemma}
  \providecommand{\propositionname}{Proposition}
  \providecommand{\remarkname}{Remark}
\providecommand{\theoremname}{Theorem}
\begin{document}

\title[Construction of Nikulin configurations]{Construction of Nikulin configurations on some Kummer surfaces and applications}

\addtolength{\textwidth}{0mm}
\addtolength{\hoffset}{-0mm} 
\addtolength{\textheight}{0mm}
\addtolength{\voffset}{-0mm} 

\global\long\global\long\def\Alb{{\rm Alb}}
 \global\long\global\long\def\Jac{{\rm Jac}}
\global\long\global\long\def\Disc{{\rm Disc}}

\global\long\global\long\def\Tr{{\rm Tr}}
 \global\long\global\long\def\NS{{\rm NS}}
\global\long\global\long\def\PicVar{{\rm PicVar}}
\global\long\global\long\def\Pic{{\rm Pic}}
\global\long\global\long\def\Br{{\rm Br}}
 \global\long\global\long\def\Pr{{\rm Pr}}
\global\long\global\long\def\Km{{\rm Km}}
\global\long\global\long\def\rk{{\rm rk}}

\global\long\global\long\def\Hom{{\rm Hom}}
 \global\long\global\long\def\End{{\rm End}}
 \global\long\global\long\def\aut{{\rm Aut}}
 \global\long\global\long\def\NS{{\rm NS}}
 \global\long\global\long\def\SSm{{\rm S}}
 \global\long\global\long\def\psl{{\rm PSL}}
 \global\long\global\long\def\CC{\mathbb{C}}
 \global\long\global\long\def\BB{\mathbb{B}}
 \global\long\global\long\def\PP{\mathbb{P}}
 \global\long\global\long\def\QQ{\mathbb{Q}}
 \global\long\global\long\def\RR{\mathbb{R}}
 \global\long\global\long\def\FF{\mathbb{F}}
 \global\long\global\long\def\DD{\mathbb{D}}
 \global\long\global\long\def\NN{\mathbb{N}}
 \global\long\global\long\def\ZZ{\mathbb{Z}}
 \global\long\global\long\def\HH{\mathbb{H}}
 \global\long\global\long\def\Gal{{\rm Gal}}
 \global\long\global\long\def\OO{\mathcal{O}}
 \global\long\global\long\def\pP{\mathfrak{p}}
 \global\long\global\long\def\pPP{\mathfrak{P}}
 \global\long\global\long\def\qQ{\mathfrak{q}}

\global\long\global\long\def\mm{\mathcal{M}}
 \global\long\global\long\def\aaa{\mathfrak{a}}
\global\long\def\a{\alpha}
\global\long\def\b{\beta}
 \global\long\def\d{\delta}
 \global\long\def\D{\Delta}
\global\long\def\L{\Lambda}
 \global\long\def\g{\gamma}
 \global\long\def\G{\Gamma}
 \global\long\def\d{\delta}
 \global\long\def\D{\Delta}
 \global\long\def\e{\varepsilon}
 \global\long\def\k{\kappa}
 \global\long\def\l{\lambda}
 \global\long\def\m{\mu}
 \global\long\def\o{\omega}
 \global\long\def\p{\pi}
 \global\long\def\P{\Pi}
 \global\long\def\s{\sigma}
 \global\long\def\S{\Sigma}
 \global\long\def\t{\theta}
 \global\long\def\T{\Theta}
 \global\long\def\f{\varphi}
 \global\long\def\deg{{\rm deg}}
 \global\long\def\det{{\rm det}}
 \global\long\def\dps{{\displaystyle }}
 \global\long\def\Dem{D\acute{e}monstration: }
 \global\long\def\ker{{\rm Ker\,}}
 \global\long\def\im{{\rm Im\,}}
 \global\long\def\rg{{\rm rg\,}}
 \global\long\def\car{{\rm car}}
\global\long\def\fix{{\rm Fix( }}
 \global\long\def\card{{\rm Card\  }}
 \global\long\def\codim{{\rm codim\,}}
 \global\long\def\coker{{\rm Coker\,}}
 \global\long\def\mod{{\rm mod }}
 \global\long\def\pgcd{{\rm pgcd}}
 \global\long\global\long\def\qa{\mathfrak{a}}

\global\long\def\ppcm{{\rm ppcm}}
 \global\long\def\la{\langle}
 \global\long\def\ra{\rangle}

\subjclass[2000]{Primary: 14J28 ; Secondary: 14J50, 14J29, 14J10}

\keywords{Kummer surfaces, Nikulin configurations, Hyperelliptic curves on
Abelian surfaces}

\author{Xavier Roulleau, Alessandra Sarti}
\begin{abstract}
A Nikulin configuration is the data of $16$ disjoint smooth rational
curves on a K3 surface. According to a well known result of Nikulin,
if a K3 surface contains a Nikulin configuration $\mathcal{C}$, then
$X$ is a Kummer surface $X=\Km(B)$ where $B$ is an Abelian surface
determined by $\mathcal{C}$. Let $B$ be a generic Abelian surface
having a polarization $M$ with $M^{2}=k(k+1)$ (for $k>0$ an integer)
and let $X=\Km(B)$ be the associated Kummer surface. To the natural
Nikulin configuration $\mathcal{C}$ on $X=\Km(B)$, we associate
another Nikulin configuration $\mathcal{C}'$; we denote by $B'$
the Abelian surface associated to $\mathcal{C}'$, so that we have
also $X=\Km(B')$. For $k\geq2$ we prove that $B$ and $B'$ are
not isomorphic. We then construct an infinite order automorphism of
the Kummer surface $X$ that occurs naturally from our situation.
Associated to the two Nikulin configurations $\mathcal{C},$ $\mathcal{C}'$,
there exists a natural bi-double cover $S\to X$, which is a surface
of general type. We study this surface which is a Lagrangian surface
in the sense of Bogomolov-Tschinkel, and for $k=2$ is a Schoen surface.
\end{abstract}

\maketitle

\section{Introduction}

To a set $\mathcal{C}$ of $16$ disjoint smooth rational curves $A_{1},\dots,A_{16}$
on a K3 surface $X$, Nikulin proved that one can associate a double
cover $\tilde{B}\to X$ branched over the curve $\sum A_{i}$, such
that the minimal model $B$ of $\tilde{B}$ is an Abelian surface
and the $16$ exceptional divisors of $\tilde{B}\to B$ are the curves
above $A_{1},\dots,A_{16}$. The K3 surface $X$ is thus a Kummer
surface.

We call a set of $16$ disjoint $(-2)$-curves on a K3 surface a \textit{Nikulin
configuration}. Let us recall a classical construction of Nikulin
configurations. The Kummer surface $X=\Km(B)$ of a Jacobian surface
$B$ can be embedded birationally onto a quartic $Y$ of $\PP^{3}$
with $16$ nodes. Projecting from one node  one gets
another projective model for $X$, this is a double cover $Y'\to \mathbb P ^2$ of the plane
branched over $6$ lines tangent to a conic. The strict
transform (in $X$) of that conic is the union of two $(-2)$-curves
$A_{1},A_{1}'$, with $A_{1}A_{1}'=6$. One of these two curves, $A_{1}$
say, corresponds to the node from which we project. Above the $15$
intersection points of the $6$ lines there are $15$ disjoint $(-2)$-curves
$A_{2},\dots,A_{16}$ on $X$, which corresponds to the $15$ other
nodes of the quartic $Y$.\\
The divisors $\mathcal{C}=\sum_{i=1}^{16}A_{i},\,\mathcal{C}'=A_{1}'+\sum_{i=2}^{16}A_{i}$
are two Nikulin configurations. The Abelian surface $B$ is then the
Jacobian of the double cover of $A_{1}$ branched over $A_{1}\cap A_{1}'$. 

Let now $k>0$ be an integer and let $(B,M)$ be a polarized Abelian
surface with $M^{2}=k(k+1)$, such that $B$ is generic, i.e. $\NS(B)=\ZZ M$.
Let $X=\Km(B)$ be the associated Kummer surface, let $L\in\NS(X)$
be the class corresponding to $M$ (so that $L^{2}=2M^{2}$), and
let $\mathcal{C}=A_{1}+\dots+A_{16}$ be the natural Nikulin configuration
on $\Km(B)$ (the class $L$ is orthogonal to the $A_{i}$'s). We
obtain the following results, which for $k=1$ are the results we
recalled for Jacobian Kummer surfaces:
\begin{thm}
\label{thm:main1}Let be $t\in\{1,\dots,16\}$. There exists a $(-2)$-curve
$A_{t}'$ on $\Km(B)$ such that $A_{t}A_{t}'=4k+2$ and $\mathcal{C}_{t}=A_{t}'+\sum_{j\neq t}A_{j}$
is another Nikulin configuration. \\
The numerical class of $A_{t}'$ is $2L-(2k+1)A_{t}$; the class
\[
L_{t}'=(2k+1)L-2k(k+1)A_{t}
\]
generates the orthogonal complement of the $16$ curves $A'_{t}$
and $\{A_{j}\,|\,j\neq t\}$; moreover $L_{t}'^{2}=L^{2}$.
\end{thm}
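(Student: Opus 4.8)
\emph{The plan.} The numerical statements are a routine computation in $\NS(X)$ from $L^{2}=2k(k+1)$, $A_{i}^{2}=-2$, $A_{i}\cdot A_{j}=0$ for $i\neq j$ and $L\cdot A_{i}=0$: with $A_{t}'=2L-(2k+1)A_{t}$ one gets $(A_{t}')^{2}=8k(k+1)-2(2k+1)^{2}=-2$, $A_{t}'\cdot A_{j}=0$ for $j\neq t$, $A_{t}'\cdot A_{t}=2(2k+1)=4k+2$ and $A_{t}'\cdot L=4k(k+1)>0$, while with $L_{t}'=(2k+1)L-2k(k+1)A_{t}$ one gets $(L_{t}')^{2}=2k(k+1)\big((2k+1)^{2}-4k(k+1)\big)=2k(k+1)=L^{2}$ together with $L_{t}'\cdot A_{t}'=L_{t}'\cdot A_{j}=0$ and $L_{t}'\cdot L=2k(k+1)(2k+1)>0$. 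So the whole theorem reduces to showing that the class $2L-(2k+1)A_{t}$ is represented by an irreducible $(-2)$-curve $A_{t}'$: then $\{A_{t}'\}\cup\{A_{j}\mid j\neq t\}$ is automatically a set of $16$ disjoint $(-2)$-curves, its orthogonal complement in the rank-$17$ lattice $\NS(X)$ has rank $1$, and, since $\gcd(2k+1,2k(k+1))=1$ (indeed $\gcd(2k+1,2k)=1$ and $2k+1=2(k+1)-1$) makes $L_{t}'$ primitive in $\ZZ L\oplus\ZZ A_{t}$, while the structure of $\NS(X)$ for generic $B$ recalled above makes it primitive in $\NS(X)$, that complement is generated by $L_{t}'$, with $L_{t}'^{2}=L^{2}$.

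\emph{Effectivity.} I would first show $A_{t}'$ is effective. Riemann--Roch on the K3 surface gives $h^{0}(\OO_{X}(A_{t}'))+h^{0}(\OO_{X}(-A_{t}'))\geq\chi(\OO_{X}(A_{t}'))=2+\tfrac{1}{2}(A_{t}')^{2}=1$, so $A_{t}'$ or $-A_{t}'$ is effective; since $L$ is nef (it is the pull-back to $X$ of the polarization on $B/\{\pm 1\}$ determined by $M$) and $L\cdot A_{t}'=4k(k+1)>0$, the class $-A_{t}'$ is not effective, hence $A_{t}'$ is effective.

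\emph{Irreducibility --- the hard part.} Writing the effective class as $A_{t}'=\sum_{i}m_{i}\Gamma_{i}$ with the $\Gamma_{i}$ distinct irreducible curves and $m_{i}>0$, the relation $(A_{t}')^{2}=-2<0$ forces some component $\Gamma_{0}$ with $A_{t}'\cdot\Gamma_{0}<0$, hence $\Gamma_{0}^{2}=-2$; the goal is to prove the decomposition is trivial, $A_{t}'=\Gamma_{0}$. The cheap constraints are that no $A_{j}$ with $j\neq t$ is a component (else $A_{t}'\cdot A_{j}\leq-2m_{A_{j}}<0$) and $A_{t}$ is not a component either ($A_{t}'\cdot A_{t}=4k+2>0$), together with $\sum_{i}m_{i}(L\cdot\Gamma_{i})=4k(k+1)$ and $\sum_{i}m_{i}(\Gamma_{i}\cdot A_{j})=0$ for each $j\neq t$, all summands $\geq 0$. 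To finish I would use the explicit description of $\NS(X)$ for generic $B$ (the saturation of $\langle A_{1},\dots,A_{16}\rangle$ is the Kummer lattice, and $\NS(X)=\langle L\rangle\oplus(\text{Kummer lattice})$ up to the known finite gluing): intersecting the decomposition with $L_{t}'$, which is orthogonal to all $16$ curves of $\mathcal{C}_{t}$, confines each $\Gamma_{i}$ to the negative-definite sublattice $(L_{t}')^{\perp}\subset\NS(X)$ once one knows $L_{t}'$ is nef, and within the rank-two piece $\langle L,A_{t}\rangle$ the $(-2)$-classes $aL+bA_{t}$ are precisely the solutions of the Pell equation $b^{2}-k(k+1)a^{2}=1$, whose only effective representative of $L$-degree smaller than $L\cdot A_{t}'=4k(k+1)$ is $A_{t}$ itself --- already excluded as a component --- so no proper splitting of $A_{t}'$ compatible with orthogonality to the $A_{j}$ exists and $A_{t}'=\Gamma_{0}$. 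The step demanding genuine work is exactly the nefness of $L_{t}'$ (equivalently, controlling which $(-2)$-curves can occur on $X$), which rests on the genericity hypothesis $\NS(B)=\ZZ M$; everything else is formal.
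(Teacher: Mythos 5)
You reduce the theorem, exactly as the paper does, to showing that the class $2L-(2k+1)A_{t}$ is represented by an irreducible $(-2)$-curve; the numerical identities, the effectivity argument via Riemann--Roch and $L\cdot A_{t}'>0$, and the rank-one/primitivity remark for $L_{t}'$ all match the paper. The genuine gap is precisely the step you flag and then skip: the nefness of $L_{t}'$ (together with the determination of \emph{all} $(-2)$-classes $\Gamma$ with $L_{t}'\Gamma\leq 0$) is not a formality that merely ``rests on the genericity hypothesis'' --- it is the technical heart of the proof. In the paper (Proposition \ref{prop:Suppose-.-Thena)}) one writes a hypothetical $(-2)$-curve as $\Gamma=\alpha L-\sum\beta_{i}A_{i}$ with $\alpha,\beta_{i}\in\frac{1}{2}\ZZ$, invokes the structure of $\NS(X)$ recalled in Lemma \ref{lem:At most 4 beta} (a class with one half-integral coefficient has at least four half-integral $\beta_{j}$'s, and at least eight if moreover $\alpha\in\ZZ$), and runs through the cases $\alpha\in\{0,\tfrac12,1,\tfrac32\}$, using crucially that $k^{2}+k+1$ is never a perfect square. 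Nothing in your proposal substitutes for this analysis, and without it your confinement of the components of $A_{t}'$ to the negative definite lattice $(L_{t}')^{\perp}$ has no starting point.

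Two further steps are not correctly justified as stated. If $A_{j}$ ($j\neq t$) were a component of multiplicity $m$ of an effective representative of $A_{t}'$, the residual part meets $A_{j}$ nonnegatively, so you only obtain $A_{t}'\cdot A_{j}\geq-2m$, not $\leq-2m$: no contradiction. Similarly $A_{t}'\cdot A_{t}=4k+2>0$ does not exclude $A_{t}$ as a component, since a divisor can meet one of its own components positively (writing $A_{t}'=A_{t}+R$ merely forces $R\cdot A_{t}=4k+4$); and confining the remaining components to $\ZZ L\oplus\ZZ A_{t}$ with integral coefficients again requires Lemma \ref{lem:At most 4 beta}, which you invoke only loosely. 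Note finally that even once nefness and the classification of the contracted $(-2)$-classes are available, the paper concludes irreducibility by a different route: it proves $|L_{t}'|$ is base-point free and birational (parts b and c of Proposition \ref{prop:Suppose-.-Thena)}) and applies the Grauert contraction theorem in the proof of Theorem \ref{thm:The-class-is -2}. Your Pell-equation observation (fundamental solution $(2k+1,2)$) is essentially the unlabeled lemma the paper uses later, in the proof of Theorem \ref{thm:no automorphisms}, and it could indeed be used to repair your sketch --- but only after the lattice-theoretic input above has been carried out.
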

A \textit{Kummer structure} on a Kummer surface $X$ is an isomorphism class
of Abelian surfaces $B$ such that $X\simeq\Km(B)$. It is known that
Kummer structures on $X$ are in one-to-one correspondence with the
orbits of Nikulin configurations by the action of the automorphism
group of $X$ (see Proposition \ref{prop:The-Kummer-structures}).
In \cite[Question 5]{Sh1}, Shioda raised the question whether if there could be more than one
Kummer structure on a Kummer surface. In \cite{GH}, Gritsenko and
Hulek noticed that $\Km(B)\simeq\Km(B^{*})$, where $B^{*}$ is the
dual of $B$, a $(1,t)$-polarized Abelian surface (thus $B\not\simeq B^{*}$
if $t>1$). In \cite{HLOY} Hosono, Lian, Oguiso and Yau proved that
the number of Kummer structures is always finite and they construct
for any $N\in\NN^{*}$ a Kummer surface of Picard number $18$ with
at least $N$ Kummer structures. When the Picard number is $17$ (which
is the case of our paper), by results of Orlov \cite{Orlov} on derived
categories, the number of Kummer structures on $X$ equals $2^s$ where $s$ is the number
of prime divisors of $\frac{1}{2}M^{2}$. In Section \ref{subsec:Nikulin-structures-and Fermat},
we obtain the following result
\begin{thm}
\label{thm:Main 3}Suppose $k\geq2$. There is no automorphism of
$X$ sending the Nikulin configuration $\mathcal{C}=\sum_{j=1}^{16}A_{j}$
to the configuration $\mathcal{C}_{t}=A_{t}'+\sum_{j\neq t}A_{j}$. 
\end{thm}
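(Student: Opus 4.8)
I would prove this by reducing, via Proposition~\ref{prop:The-Kummer-structures} and the Torelli theorem for K3 surfaces, to a lattice-theoretic incompatibility, and then reading off a numerical obstruction from the discriminant group of $\NS(X)$. Suppose for contradiction that $\f\in\aut(X)$ satisfies $\f^*\mathcal C=\mathcal C_t$ as sets of $16$ curves. Then $\f^*$ is a Hodge isometry of $H^2(X,\ZZ)$ preserving the ample cone; it carries the saturation $K$ of $\sum_{j}\ZZ A_j$ (the Kummer lattice) onto the saturation $K_t$ of $\ZZ A_t'+\sum_{j\neq t}\ZZ A_j$, hence $K^{\perp}=\ZZ L$ onto $K_t^{\perp}=\ZZ L_t'$, and since $L$ and $L_t'$ are both nef this forces $\f^*L=L_t'=(2k+1)L-2k(k+1)A_t$. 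Since $B$ is generic, the transcendental lattice $T_X\cong T_B(2)$ has only $\pm\mathrm{id}$ as Hodge isometries, so by Nikulin's theory of discriminant forms $\f^*$ must act on $\Disc(\NS(X))$ — which, up to sign of the form, equals $\Disc(T_X)\cong(\ZZ/2)^4\oplus\ZZ/2k(k+1)$ — as $+\mathrm{id}$ or $-\mathrm{id}$, in particular as the identity on the $2$-torsion subgroup.

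Now compute the action on the cyclic part. Using the gluing of $K$ and $\ZZ L$ inside $\NS(X)$ (an index-$2$ overlattice of $K\oplus\ZZ L$ glued along a $2$-torsion vector whose $\ZZ L$-component is $\tfrac12 L$), one checks that $\tfrac{1}{k(k+1)}L\in\NS(X)^{\ast}$, so $\xi:=[\tfrac{1}{k(k+1)}L]\in\Disc(\NS(X))$ is well defined of order $k(k+1)$; and since $2k(k+1)A_t\in\NS(X)$, the relation $\f^*L=L_t'$ gives $\f^*\xi=(2k+1)\xi$, an element again generating $\langle\xi\rangle$ because $\gcd(2k+1,k(k+1))=1$. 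Thus $\f^*$ restricts to multiplication by $2k+1$ on $\langle\xi\rangle\cong\ZZ/k(k+1)$, and being $\pm\mathrm{id}$ there forces $2k+1\equiv\pm1\pmod{k(k+1)}$, i.e. $k(k+1)\mid 2k$ or $k(k+1)\mid 2(k+1)$, that is $k+1\mid 2$ or $k\mid 2$. This is impossible for $k\geq 3$, which settles the theorem in that range.

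The case $k=2$ is more delicate: there $2k+1=5\equiv-1\pmod{6}$, so the cyclic part only excludes $\f^*|_{\Disc}=+\mathrm{id}$ and is compatible with $\f^*|_{\Disc}=-\mathrm{id}$. To rule this out I would use the permutation structure: $\f^*$ maps $\{A_1,\dots,A_{16}\}$ onto $\{A_t'\}\cup\{A_j\mid j\neq t\}$, so there is a unique $i_0$ with $\f^*A_{i_0}=A_t'=2L-(2k+1)A_t$; following through $\f^*$ the $2$-torsion class of $\Disc(\NS(X))$ attached to $A_{i_0}$ (governed by $\Disc(K)$ and the half-sums of the $A_i$) one finds it is not preserved, contradicting the previous step. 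Alternatively, for $k=2$ one can invoke Orlov's theorem (exactly $2^{1}=2$ Kummer structures on $X$) and identify $B'$ with the dual $(1,3)$-polarized surface $\widehat B$, which is not isomorphic to $B$ for $B$ generic. I expect the genuinely delicate point to be precisely this $k=2$ analysis — and, underlying it, the careful bookkeeping of the gluing of the Kummer lattice with $\ZZ L$ in $\NS(X)$ and of the action of $\f^*$ on the $2$-torsion of the discriminant.
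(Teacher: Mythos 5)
Your discriminant-form argument disposes of $k\geq 3$ correctly, and by a route genuinely different from the paper's: the paper first shows (via a Pell--Fermat analysis of the $(-2)$-classes in $\ZZ L\oplus\ZZ A_t$) that $f$ must exchange $A_t$ and $A_t'$, then that $f$ preserves an ample class and hence has finite $2$-power order, and finally kills the resulting involutions by trace and $2$-elementary-lattice arguments. You instead note that $\rk T_X=5$ forces any Hodge isometry of $T_X$ to be $\pm\mathrm{id}$, hence, by the gluing of $\NS(X)$ and $T_X$ inside the unimodular $H^{2}(X,\ZZ)$, $f^{*}$ acts as $\pm\mathrm{id}$ on $\Disc(\NS(X))$; on the other hand $f^{*}L=L_t'$ (forced by Theorem~\ref{thm:main1} and nefness) gives multiplication by $2k+1$ on the class $\xi=[\tfrac{1}{k(k+1)}L]$, which is legitimately in $\NS(X)^{\vee}$ (it pairs integrally with $v_{17}$) and has exact order $k(k+1)$, and $2k+1\equiv\pm1\pmod{k(k+1)}$ fails for $k\geq3$. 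This is a real simplification in that range.

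The gap is the case $k=2$, which is part of the statement. Your first route (``following through the $2$-torsion class attached to $A_{i_0}$ one finds it is not preserved'') is asserted, not proved, and it is doubtful it can succeed: for $k=2$ one has $A_t'=2L-5A_t$ and $L-3A_t\in\NS(X)$, so $\tfrac12A_t'\equiv\tfrac12A_t$ in $\Disc(\NS(X))$, and hence the natural candidate isometries act on the $2$-torsion of the discriminant exactly as a permutation of the sixteen $A_j$ would; the paper's own computation in the proof of Lemma~\ref{lem:The-map-extends} ($\theta_j w_i=w_i$ for $i\leq4$) already shows the $2$-torsion sees nothing in the closely analogous situation. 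This is precisely why the paper's treatment of $k=2$ is not lattice-theoretic bookkeeping but a geometric analysis: reduction to an involution of the $15$-nodal quartic model in $\PP^{3}$, the Artebani--Sarti--Taki classification of fixed loci of non-symplectic involutions, and the trace of a symplectic involution. Your second route is circular: by Proposition~\ref{prop:The-Kummer-structures} and Orlov's count there are exactly two Kummer structures on $X$, namely $B$ and $B^{*}$, and the claim that the abelian surface attached to $\mathcal{C}_t$ is $B^{*}$ rather than $B$ is exactly equivalent to the nonexistence of the automorphism you are trying to rule out; the paper obtains the identification with $B^{*}$ only as a corollary of the theorem, not as an input. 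So for $k=2$ you must either actually carry out (and very likely strengthen beyond the discriminant) the lattice analysis, or supply an independent geometric argument along the lines of the paper's.
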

Therefore the two configurations $\mathcal{C},\,\mathcal{C}_{t}$
belong in two distinct orbits of Nikulin configurations under the
action of $\aut(X)$. As far as we know, Theorem \ref{thm:Main 3}
gives the first explicit construction of two distinct Kummer structures
on a Kummer surface: the constructions in \cite{HLOY} and \cite{GH}
use lattice theory and do not give a geometric description of the
Nikulin configurations.

We already recalled that when $X$ is a Jacobian Kummer surface, there
exists a non-symplectic involution $\iota$ on $X$ such that the
double cover $\pi:X\to\PP^{2}$ is the quotient of $X$ by $\iota$
(after contraction of the $16$ $(-2)$-curves). That involution
exchanges the $(-2)$-curves $A_{1}$ and $A_{1}'$ and fixes the
$15$ other curves $\{A_{j}\,|\,j\neq1\}$. For $X$ a K3 surface
with a polarization $L$ such that $L^{2}=2k(k+1)$ and $t\in\{1,\dots,16\}$,
let $\theta_{t}$ be the involution of $\NS(X)\otimes\QQ$ defined
by $L\to L_{t}'$, $A_{t}\to A_{t}'$ (as defined in Theorem \ref{thm:main1}),
and $\t_{t}(A_{j})=A_{j}$ for $j\neq t$. When $k=1$, $\theta_{1}$
is in fact the action of the involution $\iota$ on $\NS(X)$ : $\iota^{*}=\t_{1}$.
We do not have such an interpretation when $k>1$ (this is in fact
the content of Theorem \ref{thm:Main 3}), but we obtain the following
result on the product $\t_{i}\t_{j}$:
\begin{thm}
\label{thm:Main-2}For $1\leq i\neq j\leq16$ there exists an infinite
order automorphism $\mu_{ij}$ of $X$ such that the action of $\mu_{ij}$
on $\NS(X)$ is $\mu_{ij}^{*}=\t_{i}\t_{j}$ . 
\end{thm}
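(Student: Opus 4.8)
The plan is to use the Torelli theorem for K3 surfaces: an isometry of $\NS(X)$ is induced by an automorphism of $X$ provided it acts as the identity on the transcendental lattice $T(X)$, preserves the period, and maps the ample cone to itself (equivalently, after composing with a suitable element of the Weyl group generated by $(-2)$-reflections, it fixes an ample class; see Pjateckii-Shapiro--Shafarevich). So the first step is to check that $\mu_{ij}^{*}:=\t_i\t_j$ is a well-defined isometry of the lattice $\NS(X)$ (not merely of $\NS(X)\otimes\QQ$). Since $\NS(X)$ is generated by $L$ and the $A_1,\dots,A_{16}$ together with the classes $\tfrac12(\sum_{m\in \nu}A_m)$ forced by the even-ness of the Kummer lattice, I would compute $\t_i\t_j$ on these generators using the explicit formulas of Theorem \ref{thm:main1}: $\t_i$ sends $L\mapsto L_i'=(2k+1)L-2k(k+1)A_i$ and $A_i\mapsto A_i'=2L-(2k+1)A_i$ and fixes the other $A_m$. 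Composing, one gets $\t_i\t_j(L)=\t_i((2k+1)L-2k(k+1)A_j)=(2k+1)L_i'-2k(k+1)A_j=(2k+1)^2L-2k(k+1)(2k+1)A_i-2k(k+1)A_j$, and similarly explicit integral expressions for the images of $A_i,A_j$, while $A_m$ with $m\neq i,j$ is fixed. One checks these land in $\NS(X)$ (the coefficients are integers, and the half-sum classes are permuted among themselves or sent to integral combinations — this needs a short verification) and that the map preserves the intersection form, which is automatic since each $\t_t$ is an involutive isometry of $\NS(X)\otimes\QQ$ fixing the orthogonal complement of $\{L,A_t\}$ pointwise and acting on the rank-2 piece $\langle L,A_t\rangle$ by an isometry (this is how $A_t',L_t'$ were constructed).

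The second step is to extend $\mu_{ij}^*$ to an isometry of the full K3 lattice $H^2(X,\ZZ)$ acting as the identity on $T(X)=\NS(X)^\perp$. Because $X$ is generic in our family, $T(X)$ has rank $5$ and the discriminant groups satisfy $A_{\NS(X)}\cong A_{T(X)}$; by Nikulin's criterion an isometry of $\NS(X)$ extends to $H^2(X,\ZZ)$ acting trivially on $T(X)$ if and only if it acts trivially on the discriminant group $A_{\NS(X)}$. So I would check that $\mu_{ij}^*$ induces the identity on $A_{\NS(X)}$. This is plausible because $\t_i\t_j$ differs from the identity only on the rank-$3$ sublattice $\langle L,A_i,A_j\rangle$ and, modulo $\NS(X)$, the correction terms involve $2k(k+1)A$-type classes which should vanish in the discriminant group (every $A_m$ has self-intersection $-2$ and the relevant coefficients $2k+1$, $2k(k+1)$ are chosen precisely so that $L_t'$ is again primitive of the same square). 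Once the extension exists and acts trivially on $T(X)$, it automatically preserves the period line $\CC\omega\subset T(X)\otimes\CC$.

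The third step is the ample-cone condition. The isometry $\mu_{ij}^*$ need not preserve the ample cone on the nose, but by the strong Torelli theorem it suffices that $\mu_{ij}^*$, composed with some element $w$ of the Weyl group $W(X)$ generated by reflections in $(-2)$-classes, sends the ample cone to itself; then $w\circ\mu_{ij}^*$ is induced by an automorphism $g$, and $g^*=w\circ\mu_{ij}^*$. But we actually want an automorphism realizing $\t_i\t_j$ exactly, not up to $W(X)$. The cleanest route: show directly that $\mu_{ij}^*$ itself is "effective" in the sense that it maps the set of classes of irreducible $(-2)$-curves to itself — indeed it fixes $A_m$ ($m\neq i,j$), and by Theorem \ref{thm:main1} sends $A_i\mapsto A_i'$ and (applying $\t_i$ to the statement with $t=j$, together with the analogue of Theorem \ref{thm:main1} inside the configuration $\mathcal C_i$) $A_j$ to another effective $(-2)$-class; iterating shows $\mu_{ij}^*$ permutes a "coherent" set of $(-2)$-curves, hence preserves a chamber, hence the ample cone. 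Then $\mu_{ij}^*=g^*$ for a genuine automorphism $g=:\mu_{ij}$, with no Weyl-group correction needed.

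Finally, to see $\mu_{ij}$ has infinite order it is enough to exhibit one class on which $\t_i\t_j$ acts with infinite order. Restricting to the rank-$2$ (or rank-$3$) sublattice where both $\t_i$ and $\t_j$ act nontrivially, $\t_i\t_j$ is a product of two distinct involutions of an indefinite lattice; such a product has infinite order unless the two involutions commute, i.e. unless their $(-1)$-eigenspaces are orthogonal. Here the $(-1)$-eigenline of $\t_t$ is spanned by a vector in $\langle L,A_t\rangle$ with negative square, and for $i\neq j$ these two lines are not orthogonal (a direct computation with the Gram matrix $\begin{pmatrix}2k(k+1)&0\\0&-2\end{pmatrix}$ on $\langle L,A_i\rangle$ and the known value $A_iA_j=0$, $L\cdot A_t=0$ shows the pairing is nonzero), so $\t_i\t_j$ has infinite order on $\NS(X)$, hence $\mu_{ij}$ has infinite order in $\aut(X)$.

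The main obstacle I anticipate is the discriminant-group computation in step two: one has to be careful about the exact structure of $\NS(X)$ including the Kummer-lattice glue vectors and to verify that the explicit isometry $\t_i\t_j$, which visibly preserves $\ZZ L\oplus(\text{span of }A_m)\otimes\QQ$, genuinely preserves the finer integral lattice and acts trivially on its discriminant form — this is where the numerology $M^2=k(k+1)$, $L^2=2k(k+1)$, and the coefficients $2k+1$, $2k(k+1)$ from Theorem \ref{thm:main1} must conspire correctly. Everything else is a bookkeeping application of Torelli.
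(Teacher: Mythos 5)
Your outline follows the paper's own route almost step for step: check $\t_i\t_j$ is an integral isometry of $\NS(X)$, glue it with the identity on $T_X$ to an isometry of $H^2(X,\ZZ)$ via the discriminant form, verify effectivity, apply Torelli, and get infinite order from the rank-two piece (the paper records the Salem factor $T^2+(2-4k^2)T+1$, which also handles the parabolic case $k=1$; your non-commuting-involutions argument is equivalent). The one place where your sketch is genuinely thin is exactly the step you flag: the action on the discriminant group. Your heuristic (``the correction terms should vanish in the discriminant group because the coefficients are chosen so that $L_t'$ is primitive of the same square'') would, as stated, apply to a single $\t_t$ just as well as to the product -- but a single $\t_t$ does \emph{not} act trivially on the discriminant group for $k>1$: the paper computes $\t_t(w_5)=(1-2k^2)w_5$ on the $\ZZ/2k(k+1)$ summand, which is why $\t_t$ alone does not extend (Remark \ref{rem15:Because-of-the}) and why Theorem \ref{thm:Main 3} is not vacuous. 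The extension exists only for the composite because $(1-2k^2)^2\equiv 1 \pmod{2k(k+1)}$; this congruence is the actual content of the step and must be computed, not inferred from primitivity. Also, for the ample-cone condition the paper argues more directly than your chamber argument: it exhibits the explicit ample class $D=mL-\tfrac12\sum A_i$ and shows $\t_1(D)$, $\t_2(D)$ are ample by viewing them as classes of the same shape for the new Kummer structure, so that $\Phi(\t_1(D))=\t_2(D)$ is ample; your proposal to show that $\mu_{ij}^*$ ``permutes a coherent set of $(-2)$-curves, hence preserves the ample cone'' would need to account for \emph{all} $(-2)$-curves (or a nef class in the interior), not just the seventeen you name.
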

The classification of the automorphism group of a generic Jacobian
Kummer surface has been has been completed by Keum \cite{Keum} (who constructed
the last unknown automorphisms) and by Kondo \cite{Kondo} (who proved
that there was indeed no more automorphisms). 
We are far from such a knowledge for non Jacobian Kummer surfaces, thus it is interesting
to have a construction of such automorphisms $\mu_{ij}$. 
Let $A$ be an Abelian variety. In \cite{NarNori}, Narasimhan and Nori prove that the orbits by $\aut(A)$ of the principal polarisations in the N\'eron-Severi group $\NS(A)$ are finite. Similarly, one could think to prove that the number of Kummer structures on a K3 is finite by associating to each Nikulin configuration  $\mathcal{C}$ the pseudo-ample divisor $L_{\mathcal{C}}$ orthogonal to $\mathcal{C}$ and by proving that the number of orbits  of such $L_{\mathcal{C}}$ under the action of $\aut(X)$  in $\NS(X)$ is finite. 
Our approach is closer to that idea than to the  solutions previous 
proposed e.g. in \cite{HLOY} or \cite{GH}, and it gives us more informations on $\aut(X)$.\\
Observe that one can repeat the construction in Theorem \ref{thm:main1},
starting with configuration $\mathcal{C}_{i}$ instead of $\mathcal{C}$,
but Theorem \ref{thm:Main-2} tells us that the Nikulin configurations
so obtained will be in the orbit of the Nikulin configuration $\mathcal{C}$
under the automorphism group $X$, thus we do not obtain new Nikulin
structures in that way (observe also that $\mathcal{C}_{t}$ and $\mathcal{C}_{t'}$
($t\neq t'$) are in the same orbit).

The paper is organized as follows: In Section \ref{sec1:Two-Nikulin-configurations}
we construct the curve $A'_{i}$ such that $A_{i}A_{i}'=4k+2$ and
we prove Theorem \ref{thm:main1}. This is done by geometric considerations
on the properties of the divisor $L_{i}'$, which we prove is big
and nef.

In Section \ref{sec:Nikulin-configurations-and}, we construct the
automorphisms mentioned in Theorem \ref{thm:Main-2}. This is done
by using the Torelli Theorem for K3 surfaces. We then prove Theorem
\ref{thm:Main 3}, which is obtained by considerations on the lattice
$H^{2}(X,\ZZ)$.

In Section \ref{sec:bi-double-covers-associated}, we study the bi-double
cover $Z\to X$ associated to the two Nikulin configurations $\mathcal{C}=\sum_{i=1}^{16}A_{i},\,\mathcal{C}'=A_{1}'+\sum_{i=2}^{16}A_{i}$.
When $k=2$, $Y$ is a so-called Schoen surface, a fact that has been
already observed in \cite{RRS}. Schoen surfaces carry many remarkable
properties (see e.g. \cite{CMR, RRS}). For example the kernel of
the natural map
\[
\wedge^{2}H^{0}(Z,\Omega_{Z})\to H^{0}(Z,K_{Z})
\]
is one dimensional, and is not of the form $w_{1}\wedge w_{2}$, i.e.
by the Castelnuovo De Franchis Theorem, it does not come from a fibration
of $Z$ onto a curve of genus $\geq2$. Surfaces with this property
are called Lagrangian. We will see that for the other $k>1$, the surfaces 
are also Lagrangian. \\
In Subsection \ref{subsec:An-hyperelliptic-curve}, we discuss 
the singularities of the curve $A_{i}+A_{i}'$. The transversality of the intersection of
two rational curves on a K3 surface is an interesting
but open problem in general (see e.g. \cite{Huyb}). We also
study the curve $\Gamma_{i}$ on the Abelian surface $B$ coming from
the pull-back of the curve $A'_{i}$. That curve $\Gamma_{i}$ is
hyperelliptic and has a unique singularity, which is a point of multiplicity
$4k+2$, and therefore $\Gamma_{i}$ has geometric genus $\leq2g$.
In the case of a Jacobian surface, $\Gamma_{i}$ has been used as
the branch locus of covers of $B$ by Penegini \cite{Pene} and Polizzi
\cite{Polizzi}, for creating new surfaces of general type. We end
this paper by remarking that $\G_{i}$ is a curve with the lowest
known H-constant (see \cite{RoulleauIMRN} for definitions and motivations)
on an Abelian surface.

{\bf Acknowledgements} The authors thank the anonymous referee for useful remarks improving the exposition of the paper.

\section{Two Nikulin configurations on Kummer surfaces\label{sec1:Two-Nikulin-configurations}}

\subsection{Two rational curves $A_{1},\,A_{1}'$ such that $A_{1}A_{1}'=2(2k+1)$}

Let $k>0$ be an integer and let $B$ be an abelian surface with a
polarization $M$ such that $M^{2}=k(k+1)$. We suppose that $B$
is generic so that $M$ generates the Néron-Severi group of $B$.
Let $X=\Km(B)$ be the associated Kummer surface and $A_{1},\dots,A_{16}$
be its $16$ disjoint $(-2)$-curves coming from the desingularization
of $B/[-1]$. \\
By \cite[Proposition 3.2]{Morrison}, \cite[Proposition 2.6]{GS},
corresponding to the polarization $M$ on $B$, there is a polarization
$L$ on $\Km(B)$ such that 
\[
L^{2}=2k(k+1)
\]
and $LA_{i}=0,\,i\in\{1,\dots,16\}$. The Néron-Severi group of $X=\Km(B)$
satisfies: 
\[
\ZZ L\oplus K\subset\NS(X),
\]
where $K$ denotes the Kummer lattice (the saturated sub-lattice of $NS(X)$ containing the
$16$ classes $A_{i}$). For $B$ generic among polarized
Abelian surfaces $\rk(\NS(X))=17$ and $\NS(X)$ is an overlattice
of finite index of $\ZZ L\oplus K$ which is described precisely in
\cite{GS}, in particular we will use the following result:
\begin{lem}
\label{lem:At most 4 beta}(\cite[Remarks 2.3 \& 2.10]{GS}) An element
$\Gamma\in\NS(X)$ has the form $\Gamma=\alpha L-\sum\beta_{i}A_{i}$
with $\alpha,\beta_{i}\in\frac{1}{2}\ZZ$. If $\alpha$ or $\beta_{i}$
for some $i$ is in $\frac{1}{2}\ZZ\setminus\ZZ,$ then at least $4$
of the $\beta_{j}$'s are in $\frac{1}{2}\ZZ\setminus\ZZ$, if moreover
$\alpha\in\ZZ$, at least $8$ of the $\beta_{j}$'s are in $\frac{1}{2}\ZZ\setminus\ZZ$.
\end{lem}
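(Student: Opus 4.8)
The plan is to translate the statement into one about two binary codes attached to $\NS(X)$ and to read off the weight bounds from the classical description of the Kummer lattice. Write $\Gamma=\alpha L-\sum_i\beta_iA_i$, the classes $L,A_1,\dots,A_{16}$ being a $\QQ$-basis of $\NS(X)\otimes\QQ$. Since $\Gamma\cdot A_i=2\beta_i\in\ZZ$, the relations $A_i^2=-2$, $A_iA_j=0$ give $\beta_i\in\frac12\ZZ$ at once. For $\alpha$ I would observe that $2\alpha L=2\Gamma+\sum_i2\beta_iA_i\in\NS(X)\cap\QQ L$; as $L$ is the primitive polarization class coming from $M$ (so $\NS(X)\cap\QQ L=\ZZ L$), this forces $2\alpha\in\ZZ$, i.e. $\alpha\in\frac12\ZZ$. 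Thus reduction modulo $\ZZ L\oplus\bigoplus_i\ZZ A_i$ is $2$-torsion, and recording the pattern of half-integer coordinates gives a well-defined vector in $\FF_2^{17}$, with an ``$L$-coordinate'' $a$ and the indicator $\mathbf 1_S$ of $S=\{j:\beta_j\in\frac12\ZZ\setminus\ZZ\}$. The lemma becomes: every nonzero such vector has $|S|\geq4$, and those with $a=0$ have $|S|\geq8$.

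For $\alpha\in\ZZ$ (so $a=0$) with some $\beta_i$ a half-integer, the class $\sum_i\beta_iA_i=\alpha L-\Gamma$ lies in $\NS(X)\cap\bigoplus_i\QQ A_i=K$, the Kummer lattice. Here I would invoke the classical description (Nikulin, Barth--Hulek--Peters--Van de Ven): the code $\mathcal C=K/\bigoplus_i\ZZ A_i\subset\FF_2^{16}$ is the first-order Reed--Muller code $\mathrm{RM}(1,4)$, with the $16$ coordinates identified with the points of $B[2]\cong\FF_2^4$ and the $30$ weight-$8$ words with the affine hyperplanes; its weight enumerator is $1+30z^8+z^{16}$. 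Since the half-integer pattern $\mathbf 1_S$ is then a nonzero word of $\mathcal C$, we get $|S|\in\{8,16\}$, hence $|S|\geq8$, settling the ``$\alpha\in\ZZ$'' assertion.

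For $\alpha\in\frac12\ZZ\setminus\ZZ$ I would show that $\mathbf 1_S$ lies in the dual code $\mathcal C^\perp$. Indeed, for every weight-$8$ Kummer vector $\kappa=\frac12\sum_{j\in T}A_j\in K\subset\NS(X)$ (with $T$ the support of a codeword) integrality of the intersection form gives $\Gamma\cdot\kappa=\sum_{j\in T}\beta_j\in\ZZ$, whose fractional part is $\frac12|S\cap T|$; so $|S\cap T|$ is even for every codeword, i.e. $\mathbf 1_S\in\mathcal C^\perp=\mathrm{RM}(2,4)$. Moreover $S\neq\emptyset$, since otherwise $\frac12L=\Gamma+\sum_i\beta_iA_i\in\NS(X)$ with integral $\beta_i$, contradicting primitivity of $L$. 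Finally $\mathcal C^\perp$ has minimum weight $4$: pairing with the all-ones word (which lies in $\mathcal C$) kills odd weights, and the $30$ affine hyperplanes separate the points of $B[2]$, so no weight-$2$ vector is orthogonal to $\mathcal C$. Hence $|S|\geq4$, completing the ``$\alpha$ half-integer'' case.

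The step requiring the most care is the identification $\mathcal C=\mathrm{RM}(1,4)$ and the resulting minimum weights $8$ for $\mathcal C$ and $4$ for $\mathcal C^\perp$; the rest is formal. The geometric inputs are only that $L$ is primitive in $\NS(X)$ and that $K=\NS(X)\cap\bigoplus_i\QQ A_i$ is the Kummer lattice with its standard even-set structure, both part of the setup. I expect the delicate point to be precisely pinning down that the relevant coset is nonzero and orthogonal to $\mathcal C$, so that the sharp bound $4$ is attained rather than the weaker bound $2$ that mere evenness of $\NS(X)$ would give; this is exactly what the dual-code argument supplies.
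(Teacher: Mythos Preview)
The paper does not supply its own proof of this lemma: it is quoted verbatim from \cite[Remarks 2.3 \& 2.10]{GS} and used as a black box throughout. So there is nothing to compare against in the strict sense.

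That said, your argument is correct and is essentially the coding-theoretic content behind the cited remarks. Passing from $\NS(X)$ to the binary image in $\FF_2^{17}$ is exactly the right move: the case $\alpha\in\ZZ$ lands in $K/\bigoplus_i\ZZ A_i$, which is the affine Reed--Muller code $\mathrm{RM}(1,4)$ of minimum weight $8$; for $\alpha$ a genuine half-integer, pairing $\Gamma$ with the Kummer half-sets $\frac12\sum_{j\in T}A_j$ forces $\mathbf 1_S\in\mathcal C^\perp=\mathrm{RM}(2,4)$, whose minimum nonzero weight is $4$. Your separation argument for excluding weight $2$ is fine, and the nonemptiness of $S$ follows from primitivity of $L$ as you say. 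One cosmetic slip: when $\alpha$ is half-integral and all $\beta_i\in\ZZ$, what lies in $\NS(X)$ is $\alpha L=\Gamma+\sum_i\beta_iA_i$, and then subtracting $\lfloor\alpha\rfloor L$ gives $\frac12L\in\NS(X)$, contradicting primitivity; your sentence collapsed these two steps into one. Otherwise the proof is sound, and it is a clean, self-contained justification of a fact the paper simply imports.
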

The divisor 
\[
A_{1}'=2L-(2k+1)A_{1}
\]
is a $(-2)$-class, indeed:
\[
(2L-(2k+1)A_{1})^{2}=8k(k+1)-2(2k+1)^{2}=-2,
\]
and one has $A_{1}'A_{i}=0$ for $i=2,\cdots,16$. By the Riemann-Roch Theorem and since $LA_1 '>0$, the class $A_1 '$ is represented by an effective divisor. Let us prove the
following result
\begin{thm}
\label{thm:The-class-is -2}The class $A_{1}'$ can be represented
by a $(-2)$-curve and $A_{1}A'_{1}=2(2k+1)$. The set of $(-2)$-curves
\[
A_{1}',A_{2},\dots, A_{16}
\]
 is another Nikulin configuration on $X$.
\end{thm}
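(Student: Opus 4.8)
The plan is to follow the strategy announced in the introduction: first prove that the class $L_1'=(2k+1)L-2k(k+1)A_1$ is big and nef, and then deduce all the assertions from this together with the description of $\NS(X)$ in Lemma~\ref{lem:At most 4 beta}. As bookkeeping I would record the identities $L_1'^2=L^2=2k(k+1)$, $L_1'\cdot A_1'=0$, $L_1'\cdot A_j=0$ for $2\le j\le16$, and the relation $2L_1'=A_1+(2k+1)A_1'$ in $\NS(X)$. Bigness is easy: $L_1'^2>0$ and $L_1'\cdot L=(2k+1)L^2>0$ with $L$ nef force $L_1'$ (rather than $-L_1'$) to be effective, and then $h^0(nL_1')$ grows quadratically in $n$.

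For nefness, let $C$ be an irreducible curve; I must show $L_1'\cdot C\ge0$. If $C^2\ge0$ then $C$ is nef and pairs non-negatively with the effective class $L_1'$, and if $C=A_i$ the inequality is clear. Otherwise $C$ is a $(-2)$-curve, so by Lemma~\ref{lem:At most 4 beta} it has the form $C=\alpha L-\sum_i\beta_iA_i$ with $\alpha,\beta_i\in\tfrac12\ZZ$; intersecting with the nef class $L$ and with the $(-2)$-curves $A_i$ gives $\alpha\ge0$ and all $\beta_i\ge0$, and $\alpha=0$ is impossible, since then $C^2=-2$ gives $\sum_i\beta_i^2=1$, forcing (by $\beta_i\ge0$ and Lemma~\ref{lem:At most 4 beta}) $C=-A_j$ for some $j$, which is not effective. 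A direct computation gives $L_1'\cdot C=2k(k+1)\big((2k+1)\alpha-2\beta_1\big)$, so everything comes down to proving $2\beta_1\le(2k+1)\alpha$. Since $C^2=-2$ yields $\sum_i\beta_i^2=k(k+1)\alpha^2+1$, whence $\beta_1^2\le k(k+1)\alpha^2+1$, and since $\tfrac{(2k+1)^2}{4}-k(k+1)=\tfrac14$, this already settles the cases $\alpha\ge2$ as well as the case where $\alpha$ and all the $\beta_i$ are integers (a one-line estimate). \textbf{The crux is the finitely many remaining possibilities $\alpha\in\{\tfrac12,1,\tfrac32\}$ with some half-integral coefficient.} There I would feed back Lemma~\ref{lem:At most 4 beta}: at least $4$ (resp.\ $8$) of the $\beta_j$ are half-integers when $\alpha\notin\ZZ$ (resp.\ $\alpha\in\ZZ$), so at least $3$ (resp.\ $7$) indices $j\ne1$ contribute $\beta_j^2\ge\tfrac14$, which sharpens the bound on $\beta_1^2$ enough to finish $\alpha=1$ and $\alpha=\tfrac32$; the case $\alpha=\tfrac12$ is the tightest and needs in addition that $\beta_1$ runs over the discrete set $\tfrac12\ZZ_{\ge0}$, with a short split on the parity of $k$. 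I expect this norm-and-parity bookkeeping to be the only genuinely delicate point of the argument.

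Once $L_1'$ is big and nef the theorem follows formally. If $D$ is an irreducible curve with $L_1'\cdot D=0$, the Hodge index theorem (using $L_1'^2>0$) forces $D^2=-2$; writing $D=\alpha L-\sum\beta_iA_i$, the equation $(2k+1)\alpha=2\beta_1$ together with $\gcd(2k+1,2)=1$ forces $\alpha\in\ZZ_{\ge0}$, and then $\sum\beta_i^2=k(k+1)\alpha^2+1$ and Lemma~\ref{lem:At most 4 beta} leave precisely $D\in\{A_1',A_2,\dots,A_{16}\}$ ($\alpha=0$ gives the $A_j$, $\alpha=2$ gives $A_1'$, and $\alpha=1$ and $\alpha\ge3$ are excluded by the norm equation). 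Since $L_1'\cdot A_1'=0$ and $L_1'$ is nef, every irreducible component of the effective divisor $A_1'$ is such a curve $D$; as the classes $A_1',A_2,\dots,A_{16}$ are linearly independent in $\NS(X)$ (otherwise $L$ would be a $\QQ$-combination of the $A_i$, contradicting $L^2\ne0=L\cdot A_i$), the decomposition of $A_1'$ into these curves can only be $A_1'=A_1'$, so $A_1'$ is a single irreducible $(-2)$-curve. Finally $A_1A_1'=2(A_1\cdot L)-(2k+1)A_1^2=2(2k+1)$, and $A_1'A_j=2(L\cdot A_j)-(2k+1)(A_1\cdot A_j)=0$ for $2\le j\le16$, so $A_1',A_2,\dots,A_{16}$ are $16$ pairwise disjoint $(-2)$-curves, i.e.\ a Nikulin configuration.
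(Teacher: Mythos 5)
Your proof is correct, and while the first half (bigness and nefness of $L_1'$, plus the classification of the $(-2)$-classes orthogonal to it) follows the same strategy as the paper's Proposition \ref{prop:Suppose-.-Thena)}.a, the endgame is genuinely different and somewhat leaner. The paper continues by showing that $|L'|$ has no base components (via the Saint-Donat analysis of fixed parts), that the associated morphism is birational (ruling out the hyperelliptic cases), and then invokes the Grauert contraction theorem to identify the support of $A_1'$; you bypass all of parts b) and c) by decomposing the effective class $A_1'$ into irreducible components, using Hodge index to see each component is a $(-2)$-curve orthogonal to $L_1'$, and then using the linear independence of $A_1',A_2,\dots,A_{16}$ together with positivity of multiplicities to force $A_1'$ to be a single irreducible curve. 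This is a cleaner route to the theorem itself (the paper still needs b) and c) later for the projective models, so nothing is lost there either). A second, smaller divergence: in the nefness analysis your norm-plus-discreteness bookkeeping (using $2\beta_1\in\ZZ_{\ge0}$ and the half-integrality counts from Lemma \ref{lem:At most 4 beta}) replaces the paper's observation that $k^2+k+1$ is never a perfect square; both work, and your version handles the cases $\alpha=\tfrac12$ and $\alpha=1$ uniformly (no parity split on $k$ is actually needed in the $\alpha=\tfrac12$ case: $2\beta_1\ge k+1$ already contradicts $4\beta_1^2\le k^2+k+1$). The only point worth making explicit in a write-up is the effectivity of the class $A_1'$ (Riemann--Roch plus $LA_1'>0$), which you use implicitly when speaking of "the effective divisor $A_1'$"; the paper records this just before the theorem statement.
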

In order to prove Theorem \ref{thm:The-class-is -2}, let us define
\[
L'=(2k+1)L-2k(k+1)A_{1}.
\]
One has $L'A_{1}'=0$ and 
\[
L'^{2}=(2k+1)^{2}2k(k+1)-8k^{2}(k+1)^{2}=2k(k+1)=L^{2}.
\]
First let us prove:
\begin{prop}
\label{prop:Suppose-.-Thena)}One has:\\
a) The divisor $L'$ is nef and big. Moreover a $(-2)$-class $\Gamma$ satisfies $\Gamma L'=0$ if and only if $\Gamma=A_1 '$ or $\Gamma=A_j$ for $j$ in $\{2,..., 16\}$.\\
b) The linear system $|L'|$ has no base components.\\
c) The linear system $|L'|$ defines a morphism from $X=\Km(B)$ to
$\PP^{k^{2}+k+1}$ which is birational onto its image and contracts
the divisor $A_{1}'$ and the $15$ $(-2)$-curves $A_{i},\,i\geq2$.

\end{prop}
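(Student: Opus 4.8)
The plan is to treat part~a) as the real content and to deduce b) and c) from it together with the standard theory of linear systems on $K3$ surfaces. First I would dispose of the easy reductions: since $L$ is nef and $L'\cdot L=(2k+1)L^{2}>0$, the class $-L'$ is not effective, so by Riemann--Roch and $L'^{2}=2k(k+1)>0$ the class $L'$ is effective and big. To prove $L'$ nef it suffices to check $L'\cdot C\geq 0$ for every $(-2)$-curve $C$, because $L'$ is big and any irreducible curve $C$ with $C^{2}\geq 0$ is nef and hence meets the effective class $L'$ non-negatively.

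So let $C$ be a $(-2)$-curve; by Lemma~\ref{lem:At most 4 beta} write $C=\alpha L-\sum_{i}\beta_{i}A_{i}$ with $\alpha,\beta_{i}\in\frac12\ZZ$. Using $L^{2}=2k(k+1)$, $L\cdot A_{i}=0$, $A_{i}^{2}=-2$, $A_{i}\cdot A_{j}=0$ one computes
\[
L'\cdot C=2k(k+1)\bigl((2k+1)\alpha-2\beta_{1}\bigr),\qquad C^{2}=-2\ \Longleftrightarrow\ \sum_{i}\beta_{i}^{2}=k(k+1)\alpha^{2}+1 .
\]
If $C\in\{A_{1},\dots,A_{16}\}$ the inequality is clear ($L'\cdot A_{1}=4k(k+1)>0$ and $L'\cdot A_{j}=0$ for $j\geq 2$). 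Otherwise $C$ meets $L$ and each $A_{i}$ non-negatively, forcing $\alpha\geq 0$ and $\beta_{i}\geq 0$ for all $i$, and $\alpha=0$ is impossible since then $C$ would be (minus) a non-zero effective divisor, so $\alpha\geq\frac12$. Everything now turns on the identity $(2k+1)^{2}=4k(k+1)+1$: if $\alpha\geq 2$ then $4\beta_{1}^{2}\leq 4\sum_{i}\beta_{i}^{2}=(2k+1)^{2}\alpha^{2}+(4-\alpha^{2})\leq(2k+1)^{2}\alpha^{2}$, whence $2\beta_{1}\leq(2k+1)\alpha$ and $L'\cdot C\geq 0$; for the three remaining values $\alpha\in\{\frac12,1,\frac32\}$, since then $\alpha\notin\ZZ$, Lemma~\ref{lem:At most 4 beta} forces at least four $\beta_{j}\in\frac12\ZZ\setminus\ZZ$, and feeding the resulting lower bound for $\sum_{i}\beta_{i}^{2}$ into the self-intersection relation again yields $2\beta_{1}\leq(2k+1)\alpha$. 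Hence $L'$ is nef, and it is big as $L'^{2}>0$. For the ``moreover'' I would take a $(-2)$-class $\Gamma$ with $\Gamma\cdot L'=0$, assume it effective (replacing $\Gamma$ by $-\Gamma$), and write $\Gamma=\sum_{\ell}n_{\ell}C_{\ell}$ as a sum of irreducible curves: nefness forces $L'\cdot C_{\ell}=0$ for all $\ell$, while bigness of $L'$ excludes $C_{\ell}^{2}\geq 0$ (Hodge index, resp.\ a big class has positive degree on a moving curve), so each $C_{\ell}$ is a $(-2)$-curve with $(2k+1)\alpha=2\beta_{1}$ in the above notation. Since $2k+1$ is odd this forces $\alpha\in\ZZ$; then $\beta_{1}^{2}\leq\sum_{i}\beta_{i}^{2}=k(k+1)\alpha^{2}+1$ forces $\alpha\leq 2$; the value $\alpha=1$ is excluded by Lemma~\ref{lem:At most 4 beta} (it would require at least eight $\beta_{j}\in\frac12\ZZ\setminus\ZZ$, contradicting $\sum_{i}\beta_{i}^{2}=k(k+1)+1$); and $\alpha=2$ gives $\sum_{i}\beta_{i}^{2}=(2k+1)^{2}=\beta_{1}^{2}$, hence $C_{\ell}=2L-(2k+1)A_{1}=A_{1}'$. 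So each $C_{\ell}\in\{A_{1}',A_{2},\dots,A_{16}\}$; as these $16$ classes are pairwise orthogonal of square $-2$, $\Gamma^{2}=-2$ pins $\Gamma$ to a single one of them (which in passing shows $A_{1}'$ is itself a $(-2)$-curve, i.e.\ Theorem~\ref{thm:The-class-is -2}).

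For b) and c) I would invoke the classification of linear systems on $K3$ surfaces (cf.\ \cite{Huyb}). Since $L'$ is nef and big, $|L'|$ has a fixed component, or a base point, only if $L'\sim aE+\G$ with $E$ an elliptic curve, $\G$ a $(-2)$-curve and $E\cdot\G=1$; but then $L'\cdot E=1$, whereas a direct computation gives $L'\cdot D\in k(k+1)\ZZ$ for every $D\in\NS(X)$, and $k(k+1)\geq 2$, so this cannot happen. Hence $|L'|$ is base-point free and defines a morphism $\f\colon X\to\PP^{N}$ with $N=h^{0}(L')-1=1+\tfrac12 L'^{2}=k^{2}+k+1$. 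Again by the classification, $\f$ is either birational onto its image or a double cover of a surface of minimal degree; the latter requires $L'^{2}=2$ (excluded, since $L'^{2}=2k(k+1)\geq 4$) or an elliptic curve $E$ with $L'\cdot E=2$, which the divisibility above rules out for $k\geq 2$ and which for $k=1$ is ruled out by a short calculation combining $E^{2}=0$ with Lemma~\ref{lem:At most 4 beta}. Therefore $\f$ is birational onto its image and contracts exactly the $(-2)$-curves orthogonal to $L'$, which by a) are precisely $A_{1}',A_{2},\dots,A_{16}$.

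The step I expect to be the main obstacle is the nefness in a), specifically making the estimate $2\beta_{1}\leq(2k+1)\alpha$ work uniformly over all $(-2)$-curves: for $\alpha\geq 2$ it drops out of the self-intersection relation and the identity $(2k+1)^{2}=4k(k+1)+1$, but the small values $\alpha\in\{\frac12,1,\frac32\}$ genuinely need the extra arithmetic input of Lemma~\ref{lem:At most 4 beta}; once a) is in place, b) and c) are routine bookkeeping plus the divisibility remark $L'\cdot\NS(X)\subseteq k(k+1)\ZZ$.
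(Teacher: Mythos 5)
Your argument is correct in substance and follows the same skeleton as the paper's proof (reduce nefness to $(-2)$-curves, expand classes in the basis with coefficients in $\frac{1}{2}\ZZ$, exploit Lemma~\ref{lem:At most 4 beta}, then invoke Reid/Saint-Donat for b) and c)), but several ingredients are genuinely different. For nefness you prove the inequality $2\beta_{1}\leq(2k+1)\alpha$ directly for every $(-2)$-curve via the identity $(2k+1)^{2}=4k(k+1)+1$, which disposes of all $\alpha\geq2$ at once and replaces the paper's case-by-case contradiction under the hypothesis $L'\Gamma<0$; in particular you never need the paper's arithmetic observation that $k^{2}+k+1$ is not a square. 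For b) and c) your divisibility remark $L'\cdot D\in k(k+1)\ZZ$ for all $D\in\NS(X)$ replaces the paper's parity argument on $a=k^{2}+k+1$ and its explicit computation excluding an elliptic curve with $L'E=2$ for $k\geq2$; this is arguably cleaner. Finally, your treatment of the ``moreover'' clause, decomposing an effective $(-2)$-class orthogonal to $L'$ into irreducible components and using the Hodge index theorem, simultaneously reproves that the class $A_{1}'$ is represented by an irreducible $(-2)$-curve, which the paper obtains separately via Grauert's contraction theorem in the proof of Theorem~\ref{thm:The-class-is -2}.

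Two points need patching. First, in the nefness step you justify the residual cases $\alpha\in\{\frac{1}{2},1,\frac{3}{2}\}$ by ``since then $\alpha\notin\ZZ$'', which is false for $\alpha=1$; fortunately Lemma~\ref{lem:At most 4 beta} is not needed there, because $\beta_{1}^{2}\leq\sum_{i}\beta_{i}^{2}=k(k+1)+1<(k+1)^{2}$ already gives $2\beta_{1}\leq 2k+1$ (the Lemma is only required later, in the ``moreover'' part, to exclude the equality case $\alpha=1$, $\beta_{1}=\frac{2k+1}{2}$, which you do handle correctly). Second, in c) the Saint-Donat classification of hyperelliptic systems also contains the case $L'\equiv 2C$ with $C$ of genus $2$ (the $2{:}1$ map onto the Veronese surface), which your dichotomy omits; it is excluded here because it forces $L'^{2}=8$, i.e. $k(k+1)=4$, which has no integer solution --- the paper records this case explicitly. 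With these two trivial repairs your proof goes through.
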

\begin{proof}
\textbf{Proof of a).} We already know that $\ensuremath{L'^{2}=2k(k+1)>0}$.
By the Riemann-Roch Theorem either $L'$ or $-L'$ is effective. Since $LL'>0$,
we see that $L'$ is effective. On a K3 surface, the $(-2)$-curves are the only
irreducible  curves with negative self-intersection, thus $L'$ is nef if and only
if $L'\Gamma\geq0$ for each irreducible $(-2)$-curve $\Gamma$.
Let 
\[
\Gamma=\alpha L-\sum_{i=1}^{16}\beta_{i}A_{i},\qquad\alpha,\beta_{i}\in\frac{1}{2}\ZZ
\]
be the class of $\Gamma$ in $\NS(X)$. Since $\Gamma$ represents
an irreducible curve we have $\alpha\geq0$. Moreover if $\Gamma=A_{i}$
then the condition $L'\Gamma\geq0$ is trivially verified so that we can assume $\Gamma A_{i}\geq0$,
which gives $\beta_{i}\geq0$. From the condition $\Gamma^{2}=-2$,
we get 
\begin{equation}
k(k+1)\alpha^{2}-\sum_{i}\beta_{i}^{2}=-1\label{eq:carre}
\end{equation}
Assume that the $(-2)$-curve $\Gamma$ satisfies $L'\Gamma<0$. We
have 
\[
0>L'\Gamma=\left((2k+1)L-2k(k+1)A_{1}\right)\Gamma=2\alpha k(k+1)(2k+1)-4k(k+1)\beta_{1},
\]
thus 
\[
\beta_{1}>\frac{(2k+1)}{2}\alpha. \label{eq:BETA}
\]
Combining with equation \eqref{eq:carre} we get 
\[
-1=k(k+1)\alpha^{2}-\sum_{i}\beta_{i}^{2}<-\frac{1}{4}\alpha^{2}-\sum_{i=2}^{15}\beta_{i}^{2}.
\]
which is 
\begin{equation}
\frac{1}{4}\alpha^{2}+\sum_{i=2}^{15}\beta_{i}^{2}<1\label{eq:somme limite}
\end{equation}
thus $\alpha\in\{0,1/2,1,3/2\}$. \\
If $\alpha=0$, by \eqref{eq:carre} either exactly one of the $\beta_{i}=1$ (but             
this is not possible since it would give $\Gamma=-A_{i}$) or exactly
$4$ of the $\beta_{i}'s$ are equal to $\frac{1}{2}$ and the others
are $0$ but such a class is not contained in $\NS(X)$ by Lemma \ref{lem:At most 4 beta}.\\
If $\alpha=\frac{1}{2}$, then  from inequality \eqref{eq:somme limite},
$\beta_{i}\in\{0,\frac{1}{2}\}$ for $i\geq2$ and at most $3$ of
these $\beta_{i}$'s equal $\frac{1}{2}$.  By Lemma \ref{lem:At most 4 beta}
at least $4$ of the $\beta_{i}$ are in $\frac{1}{2}\ZZ\setminus\ZZ$,
thus $3$ of the $\beta_{i},\,i\geq2$ equals $\frac{1}{2}$ and the
others are $0$. Then from equation \eqref{eq:carre}, we get:
\[
\beta_{1}^{2}=\frac{k^{2}+k+1}{4}.
\]
Suppose that there exists $n\in \mathbb{N}$ such that $k^{2}+k+1=n^2$. 
Then $n> k$, but since $n^2\geq (k+1)^2>k^{2}+k+1$, we get a contradiction. 
Hence $\forall k\in\NN^{*}$, the integer $k^{2}+k+1$
is never a square and therefore the case $\alpha=\frac{1}{2}$ is impossible. \\
If $\alpha=1$, at most $2$ of the $\beta_{i}$'s with $i>1$ are
equal $\frac{1}{2}$ and the others are $0$, by applying Lemma \ref{lem:At most 4 beta}
we get $\beta_{i}=0$ for $i>1$ and $\beta_{1}\in\NN$. Then equation
\eqref{eq:carre} implies 
\[
\beta_{1}^{2}=k^{2}+k+1,
\]
which we know has no integral solutions for $k>0$.\\
If $\alpha=\frac{3}{2}$, at most $1$ of the $\beta_{i}$'s with
$i>1$ is $\frac{1}{2}$, this is also impossible by Lemma \ref{lem:At most 4 beta},
therefore such $\Gamma$ does not exist and this concludes the proof
that $L'$ is big and nef for all $k\geq1$.\\
Assume that the $(-2)$-curve $\Gamma$ satisfies $L'\Gamma=0$ and is not $A_j$ for $j\geq 2$. Then one has $\beta_{1}=\frac{(2k+1)}{2}\alpha $, and one computes that either $\a =2$, $\b_1=2k+1$ and $\Gamma=A_1 '$, or $\a=1, \b_1=\frac{(2k+1)}{2}\alpha$ and (up to re-ordering) $\b_2=b_3=b_4=1/2$. Since $\a$ is an integer the second case is impossible by Lemma \ref{lem:At most 4 beta}.

\textbf{Proof of b).} By \cite[Section 3.8]{reid} either $|L'|$
has no fixed part or $L'=aE+\Gamma$, where $|E|$ is a free pencil,
and $\Gamma$ a $(-2)$-curve with $E\Gamma=1$. In that case, write
$\Gamma=\alpha L-\sum\beta_{i}A_{i}$. Then 
\[
2k(k+1)=L'^{2}=2a-2
\]
gives $a=k^{2}+k+1$. In particular, $a$ is odd. But 
\[
a-2=L'\Gamma=2k(k+1)(2k+1)\alpha-4k(k+1)\beta_{1}
\]
and since $\alpha,\beta_{1}\in\frac{1}{2}\ZZ,$ one gets that $a$
is even, which yields a contradiction. Therefore $|L'|$ has no base
components. By \cite[Corollary 3.2]{SD}, it then has no base
points.

\textbf{Proof of c).} The linear system $|L'|$ is big and
nef without base points. We have to show that the resulting morphism has degree one, i.e.
that $|L'|$ is not hyperelliptic (see \cite[Section 4]{SD}). By
loc. cit., $|L'|$ is hyperelliptic if there exists a genus $2$ curve
$C$ such that $L'=2C$ or there exists an elliptic curve $E$ such
that $L'E=2$.\\
 In the first case $L'^{2}=8$, but since $L'^{2}=2k(k+1)$, that
cannot happen. Assume now 
\[
E=\alpha L-\sum\beta_{i}A_{i},
\]
for $E$ with $EL'=2$, we get 
\[
2=\left(\alpha L-\sum\beta_{i}A_{i}\right) \left((2k+1)L-2k(k+1)A_{1}\right)=k(k+1)\left(2(2k+1)\alpha-2\beta_{1}\right).
\]
Since $\alpha,\beta_{1}\in\frac{1}{2}\ZZ$, $2(2k+1)\alpha-2\beta_{1}$
is an integer, thus we get $k=1$ and $6\alpha-2\beta_{1}=1$. Since
$E^{2}=0$, one obtain 
\[
2\alpha^{2}=\sum\beta_{i}^{2},
\]
using $\beta_{1}=3\alpha-\frac{1}{2}$, one reaches a contradiction.
\\
Therefore $|L'|$ defines a birational map $X\to\PP^{N}$ onto its
image, contracting the $(-2)$-curves $\Gamma$ such that $L'\Gamma=0$,
moreover $N=h^{0}(L')-1=\frac{L'^{2}}{2}+1=k^{2}+k+1$.
\end{proof}
We can now prove Theorem \ref{thm:The-class-is -2}:
\begin{proof}
We proved that the only $(-2)$-classes that are contracted by $L'$ are $A_1'$, $ A_2,$ $\dots,A_{16}$. We know moreover that $A_1'A_j=A_i A_j=0$ for $2 \leq i \neq j\leq 16$. 
Since one has $L'A_{1}'=0$ the base point free linear system $|L'|$ contracts the connected components of $A_{1}'$ to some points. 
Therefore by the Grauert contraction Theorem (see \cite[Chapter III, Theorem 2.1]{BPVdV}), 
the support of $A_1'$ is the union of irreducible curves $(C_i)_{i\in \{1,\dots,m\}}$ (for $m\in \mathbb{N},\,m\neq 0$)
 such that the intersection matrix $(C_i C_j)$ is negative definite. \\
Since $X$ is a K3 surface, the curves $C_i$ are $(-2)$-curves. Since $L'$ only contracts the (-2)-classes 
$A_1'$, $ A_2,$ $\dots,A_{16}$ that are disjoint, we get that $m=1$ and  we conclude that $A_{1}'$ is the class of a $(-2)$-curve $C_1$.

\end{proof}

\subsection{A projective model of the surface $\protect\Km(B)$}

Let us describe a natural map from $\Km(B)$ to $\PP^{k+1}$, which
is birational for $k>1$:
\begin{thm} \label{thm7}
The class $D=L-kA_{1}$ is big and nef with 
\[
(L-kA_{1})^{2}=2k
\]
and for $k\geq2$ it defines a birational map 
\[
\phi:\Km(B)\to\PP^{k+1}
\]
onto its image  $X$ such that $X$ (of degree $2k$) has $15$ ordinary double
points and moreover the curves $A_{1}'$ and $A_{1}$ are sent to
two rational curves of degree $2k$ such that $A_{1}A_{1}'=2(2k+1)$.
\end{thm}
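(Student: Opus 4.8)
The plan is to argue exactly as in the proof of Proposition \ref{prop:Suppose-.-Thena)}. First I would record the elementary intersection numbers coming from $L^{2}=2k(k+1)$, $LA_{i}=0$, $A_{i}^{2}=-2$ and $A_{i}A_{j}=0$ for $i\neq j$: one gets $D^{2}=(L-kA_{1})^{2}=2k(k+1)-2k^{2}=2k$, and $DA_{1}=2k$, $DA_{j}=0$ for $j\geq 2$; moreover, with $A_{1}'=2L-(2k+1)A_{1}$, also $DA_{1}'=2k$ and $A_{1}A_{1}'=2(2k+1)$ (the last identity already appears in Theorem \ref{thm:The-class-is -2}). To show $D$ is nef it suffices, as $X$ is a K3 surface, to check $DC\geq 0$ for every irreducible curve $C$. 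For $C=A_{i}$ this is immediate from the numbers above; otherwise write $C=\alpha L-\sum\beta_{i}A_{i}$ with $\alpha,\beta_{i}\in\frac{1}{2}\ZZ$, and use that $L$ is nef to get $CL=2k(k+1)\alpha\geq 0$, hence $\alpha\geq 0$, and that $C$ irreducible with $C\neq A_{i}$ gives $CA_{i}=2\beta_{i}\geq 0$, hence $\beta_{i}\geq 0$; then $DC=2k(k+1)\alpha+2k\beta_{1}\geq 0$. Since $D^{2}=2k>0$, $D$ is big and nef, and Riemann--Roch together with the vanishing $h^{1}(D)=h^{2}(D)=0$ gives $h^{0}(D)=\frac{D^{2}}{2}+2=k+2$, so $|D|$ maps to $\PP^{k+1}$.

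Next I would show $|D|$ has no base components. By \cite[Section 3.8]{reid}, if $|D|$ had a fixed component then $D=aE+\Gamma$ with $|E|$ a free pencil and $\Gamma$ a $(-2)$-curve with $E\Gamma=1$; from $D^{2}=2a-2=2k$ we get $a=k+1$, hence $D\Gamma=a-2=k-1$. On the other hand, writing $\Gamma=\alpha L-\sum\beta_{i}A_{i}$ with $\alpha,\beta_{i}\in\frac{1}{2}\ZZ$, one computes $D\Gamma=2k(k+1)\alpha+2k\beta_{1}$, which is an integer multiple of $k$; since $k-1$ is not a multiple of $k$ for $k\geq 2$, this is a contradiction. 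Hence $|D|$ has no fixed component, and then no base points by \cite[Corollary 3.2]{SD}, so $|D|$ defines a morphism $\phi\colon\Km(B)\to\PP^{k+1}$.

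Then I would prove that $\phi$ is birational onto its image, i.e. that $|D|$ is not hyperelliptic in the sense of \cite[Section 4]{SD}, which amounts to excluding (i) a genus $2$ curve $C$ with $D=2C$, and (ii) an elliptic curve $E$ with $DE=2$. For (i) one would need $\tfrac12 D=\tfrac12 L-\tfrac{k}{2}A_{1}\in\NS(X)$, a class with $\alpha=\tfrac12$ but at most one nonzero $\beta_{j}$, hence far fewer than four half-integral $\beta_{j}$'s, which is impossible by Lemma \ref{lem:At most 4 beta} (numerically this case can only occur when $k=4$). For (ii), writing $E=\alpha L-\sum\beta_{i}A_{i}$ with $\alpha\geq 0$ and $\beta_{i}\geq 0$ (irreducibility, since $E\neq A_{i}$), the equation $DE=2k\bigl((k+1)\alpha+\beta_{1}\bigr)=2$ forces $k\mid 2$, hence $k=2$ and $(\alpha,\beta_{1})=(0,\tfrac12)$; but then $E^{2}=-2\sum\beta_{i}^{2}\leq -2\beta_{1}^{2}<0$, contradicting $E^{2}=0$. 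Hence $\phi$ is birational onto a surface $X\subset\PP^{k+1}$ of degree $D^{2}=2k$.

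Finally, $\phi$ contracts exactly the $(-2)$-curves $\Gamma$ with $D\Gamma=0$ and is an isomorphism elsewhere. Among the $A_{i}$ these are precisely $A_{2},\dots,A_{16}$ (recall $DA_{1}=2k\neq 0$); and if $\Gamma=\alpha L-\sum\beta_{i}A_{i}$ were an irreducible $(-2)$-curve different from all $A_{i}$ with $D\Gamma=0$, then $2k(k+1)\alpha+2k\beta_{1}=0$ with $\alpha,\beta_{i}\geq 0$ would force $\alpha=\beta_{1}=0$, so that $\Gamma=-\sum_{j\geq 2}\beta_{j}A_{j}$ is anti-effective, impossible. Thus $\phi$ contracts exactly the $15$ pairwise disjoint $(-2)$-curves $A_{2},\dots,A_{16}$, each to an ordinary double point, so $X$ has $15$ ordinary double points. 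Since $DA_{1}=DA_{1}'=2k\neq 0$ and $A_{1},A_{1}'\cong\PP^{1}$ (the latter by Theorem \ref{thm:The-class-is -2}), their images are rational curves of degree $2k$ in $\PP^{k+1}$, and $A_{1}A_{1}'=2(2k+1)$ by the computation above. I expect the main obstacle to be the non-hyperellipticity step: both degenerate cases come close to occurring (the case $D=2C$ is numerically possible when $k=4$, and the case $DE=2$ when $k=2$), so this step genuinely needs the structural description of $\NS(X)$ in Lemma \ref{lem:At most 4 beta} together with the sign constraints from irreducibility, just as in part (c) of Proposition \ref{prop:Suppose-.-Thena)}; the base-point-freeness step is the other place where a numerical coincidence must be ruled out, but there the divisibility argument is immediate.
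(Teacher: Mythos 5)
Your overall skeleton follows the paper's, and one of your steps is actually an improvement: for base-point-freeness, the observation that $D\Gamma=(L-kA_{1})\Gamma$ is always an integer multiple of $k$ (since $L\Gamma=k(k+1)\cdot2\alpha$ and $kA_{1}\Gamma=k\cdot2\beta_{1}$ with $2\alpha,2\beta_{1}\in\ZZ$), while $D\Gamma=a-2=k-1$ is not divisible by $k$ for $k\geq2$, is shorter than the paper's case analysis and is correct as a divisibility statement regardless of signs. However, the proof contains a systematic sign error that guts the three hardest steps. For $D=L-kA_{1}$ and $C=\alpha L-\sum\beta_{i}A_{i}$ one has
\[
DC=2k(k+1)\alpha-2k\beta_{1},
\]
not $2k(k+1)\alpha+2k\beta_{1}$: the cross term is $(-kA_{1})(-\beta_{1}A_{1})=k\beta_{1}A_{1}^{2}=-2k\beta_{1}$. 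With the correct sign, $\alpha\geq0$ and $\beta_{1}\geq0$ only give a difference of two nonnegative numbers, so nefness does not follow from positivity of coefficients. One must instead assume $DC<0$, deduce $\beta_{1}>(k+1)\alpha$, feed this into $C^{2}=-2$ (i.e.\ $k(k+1)\alpha^{2}-\sum\beta_{i}^{2}=-1$) to obtain $(k+1)\alpha^{2}+\sum_{i\geq2}\beta_{i}^{2}<1$, and then eliminate the cases $\alpha=0$ and $\alpha=\tfrac12$ (the latter only arising for $k\leq2$) using Lemma \ref{lem:At most 4 beta}. That analysis is the substantive content of the nefness claim and is entirely absent from your argument.

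The same sign error trivializes two more steps. In the hyperelliptic case (ii) the correct equation is $k\bigl((k+1)\alpha-\beta_{1}\bigr)=1$; this still forces $k=2$, but now $\beta_{1}=3\alpha-\tfrac12$ with $\alpha$ not forced to vanish. The genuine case to exclude is $\alpha=\tfrac12$, $\beta_{1}=1$, which requires combining $E^{2}=0$ with $\beta_{1}=3\alpha-\tfrac12$ to get $3\alpha^{2}-3\alpha+\tfrac14+\sum_{i\geq2}\beta_{i}^{2}=0$, forcing $\alpha=\tfrac12$ and $\sum_{i\geq2}\beta_{i}^{2}=\tfrac12$, which is then killed by Lemma \ref{lem:At most 4 beta}; your case $(\alpha,\beta_{1})=(0,\tfrac12)$ is not the one that matters. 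Likewise, for the contracted curves, $D\Gamma=0$ gives $\beta_{1}=(k+1)\alpha$, not $\alpha=\beta_{1}=0$; one must again use $\Gamma^{2}=-2$ to reach $(k+1)\alpha^{2}+\sum_{i>1}\beta_{i}^{2}=1$ and rule out its solutions via Lemma \ref{lem:At most 4 beta}. So while you correctly identified where the difficulties should lie, the steps you dismiss as immediate are exactly where the paper has to work, and as written your argument does not prove nefness, non-hyperellipticity, or the count of contracted curves.
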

\begin{rem} 
We have 
\[
A_{1}'+A_{1}=2(L-kA_{1})
\]
so that $A_{1}'+A_{1}$ is cut out by a quadric of $\PP^{k+1}$ and
is $2$-divisible. \\
\end{rem}
\begin{proof}
We proceed as in Proposition \ref{prop:Suppose-.-Thena)}.\\
\textbf{Let us show that $D$ is nef and big.} We have to prove that
$D\Gamma\geq0$ for each irreducible $(-2)$-curve $\Gamma$. As above,
let 
\[
\Gamma=\alpha L-\sum\beta_{i}A_{i},\qquad\alpha,\beta_{i}\in\frac{1}{2}\ZZ,
\]
be such that $\Gamma D<0$. Then 
\[
\Gamma D=2\alpha k(k+1)-2k\beta_{1}<0,
\]
implies $\beta_{1}>(k+1)\alpha.$\\
Combining with the equation \eqref{eq:carre}, we get
\[
1>(k+1)\alpha^{2}+\sum_{i\geq2}\beta_{i}^{2},
\]
thus $\alpha<1$. As in Proposition \ref{prop:Suppose-.-Thena)},
the case $\alpha=0$ is impossible. If $\alpha=\frac{1}{2}$, then
$k\in\{1,2\}$, but as above, Lemma \ref{lem:At most 4 beta} implies
that this is not possible. Thus $D$ is nef and big.\\
Let us now suppose $k>1$. \textbf{Let us show that $|D|$ has no
base components}. Suppose that there is a base component. Then $D=aE+\Gamma$,
where $a\in\NN$, $|E|$ is a free pencil, $\Gamma$ is a $(-2)$-curve
and $E\Gamma=1$. One has 
\[
2k=D^{2}=2a-2,
\]
thus $a=k+1$, so that
\[
L-kA_{1}=(k+1)E+\Gamma.
\]
Suppose that $\Gamma=A_{1}$, then $2k=A_{1}D=k-1$ and $k=-1$, which
is impossible. If $\Gamma=A_{i},$ $i\geq2$, then $0=DA_{i}=k-1,$
thus $k=1$, but we assumed that $k>1$.\\
Thus we can assume that $\Gamma$ is not one of the $A_{i}$ and write
$\Gamma=\alpha L-\sum\beta_{i}A_{i}$ with $\alpha,\beta_{i}\geq0$.
One has 
\begin{equation}
2k=DA_{1}=(k+1)EA_{1}+2\beta_{1},\label{eq:2k}
\end{equation}
moreover 
\begin{equation}
2k(k+1)=(L-kA_{1})L=(k+1)EL+2k(k+1)\alpha.\label{eq:2k(k+1)}
\end{equation}
Since $EA_{1}\geq0$ we obtain from equation \eqref{eq:2k} that either
$\beta_{1}=k$ (and $EA_{1}=0$) or $\beta_{1}=\frac{k-1}{2}$ and
$EA_{1}=1$, in that second case since 
\[
E(L-kA_{1})=E((k+1)E+\Gamma)=1
\]
one obtains $EL=k+1.$\\
Since $EL\geq0$, we obtain from equation \eqref{eq:2k(k+1)} that
$\alpha\in\{0,\frac{1}{2},1\}$, but as in Proposition \ref{prop:Suppose-.-Thena)},
$\alpha=0$ is not possible. Moreover if $\alpha=1$, $EL=0$, but
this contradicts the Hodge Index Theorem since $E^{2}=0$ and $L^{2}>0$,
therefore $\alpha=\frac{\ensuremath{1}}{2}$. If $\beta_{1}=k$, from
$\Gamma^{2}=-2$, one gets 
\[
\frac{k(k+1)}{4}-k^{2}-\sum_{i\geq2}\beta_{i}^{2}=-1
\]
which is 
\[
\sum_{i\geq2}\beta_{i}^{2}=\frac{1}{4}(-3k^{2}+k+4).
\]
 But for $k>1$, $-3k^{2}+k+4<0$ and we obtain a contradiction. If
now $\beta_{1}=\frac{k-1}{2}$, then $EL=k+1$, but equation \eqref{eq:2k(k+1)}
gives $EL=k$, contradiction. Therefore $|D|$ has no base component.\\
\textbf{Let us show that $|D|$ defines a birational map.} We have
to show that $|D|$ is not hyperelliptic. Suppose that $D=2C$ where
$C$ is a genus $2$ curve. Then $D^{2}=8$; since $D^{2}=2k$, we
get $k=4$. One has $D=L-4A_{1}$ and the class of $C$ is $\frac{1}{2}L-2A_{1}$.
Then $\frac{1}{2}L\in\NS(X)$, which contradicts the fact that $L$
 generates the orthogonal complement of $\NS(\Km(B)),$ and so $L$ is primitive.
Suppose now that there exists an elliptic curve $E$ such that $DE=2$.
Let 
\[
E=\alpha L-\sum\beta_{i}A_{i},
\]
with $\alpha\in\frac{1}{2}\ZZ$. Since $D=L-kA_{1},$ one has 
\[
DE=2k(k+1)\alpha-2k\beta_{1},
\]
therefore $k(k+1)\alpha-k\beta_{1}=1$. If $\alpha\in\ZZ,$ then if
$\beta_{1}\in\ZZ$, one gets $k=1$, if $\beta_{1}=\frac{b}{2}$ with
$b$ odd, then 
\[
k(2(k+1)\alpha-b)=2
\]
and $k=2$ (we supposed $k>1$), $6\alpha-b=2,$ which is impossible
since $b$ is odd. If $\alpha=\frac{a}{2}$ with $a\in\ZZ$ odd ,
then $k((k+1)a-2\beta_{1})=2$. Then since $2\beta_{1}\in\ZZ$ and
$k>1$, one has $k=2$ and $3a-2\beta_{1}=1$, thus $\beta_{1}=\frac{3a-1}{2}=3\alpha-\frac{1}{2}\in\ZZ$.
We have moreover (since $k=2$):
\[
0=E^{2}=6\alpha^{2}-\sum\beta_{i}^{2}
\]
thus 
\[
9\alpha^{2}-3\alpha+\frac{1}{4}+\sum_{i\geq2}\beta_{i}^{2}=6\alpha^{2},
\]
and $3\alpha^{2}-3\alpha+\frac{1}{4}\leq0$, the only possibility
is $\alpha=\frac{1}{2}$, but then $\sum_{i\geq2}\beta_{i}^{2}=\frac{1}{2}$,
which is impossible since, by Lemma \ref{lem:At most 4 beta}, there is no class with $\beta_{i}=\frac{1}{2}$
for only $2$ indices $i$. Therefore when $k>1$, $|D|$ defines
a birational map to $\PP^{N}$, with $N=\frac{D^{2}}{2}+1=k+1$. That
maps contracts the curves $\Gamma$ with $\Gamma D=0$, ie $A_{2},\dots,A_{16}$.

One has
\[
A_{1}(L-kA_{1})=2k=A_{1}'(L-kA_{1}),
\]
thus the curves $A_{1},A_{1}'$ in $\PP^{k+1}$ have degree $2k$.
Moreover $A_{1}A_{1}'=2(2k+1)$.\\
Let us prove that the 15 $(-2)$-curves $A_{i},\,i>1$ are the only
ones contracted i.e. they are the only solutions of the equation $\Gamma D=0$,
($D=L-kA_{1}$). Suppose $\Gamma\neq A_{i}$, $\Gamma=\alpha L-\sum\beta_{i}A_{i}$.
One has $\Gamma D=0$ if and only if
\[
\alpha(k+1)=\beta_{1},
\]
and $\alpha^{2}k(k+1)-\sum\beta_{i}^{2}=-1$, which gives 
\[
(k+1)\alpha^{2}+\sum_{i>1}\beta_{i}^{2}=1,
\]
which has no solutions by Lemma \ref{lem:At most 4 beta}. 
\end{proof}
\begin{rem}
\label{rem:To-the-pair}To the pair $(L,A_{1})$ one can associate
the pair $(L',A_{1}')$, with 
\[
L'=(2k+1)L-2k(k+1)A_{1},\,\,A_{1}'=2L-(2k+1)A_{1}
\]
with the same numerical properties 
\[
L^{2}=L'^{2}=2k,\,LA_{1}=0=L'A_{1}',\,LA_{1}'=4k(k+1)=L'A_{1}.
\]
The polarization $L'$ comes from a polarization $M'$ on the Abelian
surface $B'$ associated to the Nikulin configuration $A_{1}',A_{2},\dots,A_{16}$.
We will see that for $k=1$ the mapping $\Psi:(L,A_{1})\to(L',A_{1}')$
is an involution of $\NS(X)$ which comes from an involution of $X$,
and the Abelian surfaces $B$, $B'$ are isomorphic. \\
One can repeat the construction with $(L',A_{2})$ instead of $L,A_{1}$
etc... Let us define the maps $\Psi_{i},\,\Psi_{j}$, $\{i,j\}=\{1,2\}$
by $\Psi_{i}(L)=(2k+1)L-2k(k+1)A_{i}$, $\Psi_{i}(A_{i})=2L-(2k+1)A_{i}$,
$\Psi_{i}(A_{j})=A_{j}$. It is easy to check that $\Psi_{1}\circ\Psi_{2}$
has infinite order, and we therefore obtain in that way an infinite
number of Nikulin configurations. For any $k\in \mathbb{N},\,k\neq0$, we will see that the
map $\Psi_{i}\circ\Psi_{j}$ for $i\neq j$ is in fact the restriction
of the action of an automorphism of $X$ on $\NS(X)$. 
\end{rem}

\subsection{The first cases $k=1,2,3,4$}

In this subsection, we give a more detailed description of our construction
when $k$ is small. One has \\
\begin{tabular}{|c|c|c|c|c|}
\hline 
$k$ & $1$ & $2$ & $3$ & $4$\tabularnewline
\hline 
$A_{1}A_{1}'$ & $6$ & $10$ & $14$ & $18$\tabularnewline
\hline 
$L^{2}$ & $4$ & $12$ & $24$ & $40$\tabularnewline
\hline 
\end{tabular}\\
and the morphism $\phi$ associated to the linear system $|L-kA_{1}|$
is from $\Km(B)$ to $\PP^{k+1}$, with $k+1=2,3,4,5$ (which produce
the most famous geometric examples of K3 surfaces).

The case $k=1$ has been discussed in the Introduction. 

For $k=2$, the result was already observed in \cite{RRS}. The image
of $\phi$ is a $15$-nodal quartic $Q=Q_{4}$ in $\PP^{3}$, the
curves $A_{1},A_{1}'$ are sent to two degree $4$ rational curves
(denoted by the same letters) meeting in $10$ points. As we already
observed, the divisor $A_{1}+A_{1}'$ is a {\it 2-divisible class}.
The double cover $Y\to Q$ branched over $A_{1}+A_{1}'$ has $40$
ordinary double points coming from the $15$ singular points on $Q$
and from the $10$ intersection points of $A_{1}$ and $A_{1}'$.
This surface $Y$ is described in \cite{RRS}. It is a general type
surface, a complete intersection in $\PP^{4}$ of a quadric and the
Igusa quartic. It is the canonical image of its minimal resolution.
The double cover $S$ of $Y$ branched over the $40$ nodes is a so-called
Schoen surface. It is a surface with $p_{g}(S)=p_{g}(Y)=5$, thus
the canonical image of $S$ is $Y$ and the degree of the canonical
map of the Schoen surface is $2$.

For $k=3$, one get a model $Q_{6}$ of $X$ in $\PP^{4}$ which is
the complete intersection of a quadric and a cubic. In a similar way
as before, $Q_{6}$ has $15$ ordinary double points and $A_{1}$
and $A_{1}'$ are sent by $|L-3A_{1}|$ to two rational curves of
degree $6$ with intersection number $14$. 

For $k=4$, one get a degree $8$ model $Q_{8}$ of $X$ in $\PP^{5}$
which is the complete intersection of $3$ quadrics. That model has
$15$ ordinary double points and the curves $A_{1}$ and $A_{1}'$
are sent by $|L-4A_{1}|$ to two rational curves of degree $8$ with
intersection number $18$.  

\section{Nikulin configurations and automorphisms\label{sec:Nikulin-configurations-and}}

\subsection{Construction of an infinite order automorphism}

Let us denote by $K_{abcd}$ with $a,b,c,d\in\{0,1\}$ the $16$ $(-2)$-curves
on the K3 surface $X=\Km(A)$, and as before let $L$ be the polarization
coming from the polarization of $A$. \\
Let $K$ be the lattice generated by the following $16$ vectors $v_{1},\dots,v_{16}$:
\[
\begin{array}{c}
\frac{1}{2}\sum_{p\in A[2]}K_{p},\,\frac{1}{2}\sum_{W_{1}}K_{p},\,\frac{1}{2}\sum_{W_{2}}K_{p},\,\frac{1}{2}\sum_{W_{3}}K_{p},\,\frac{1}{2}\sum_{W_{4}}K_{p},\,K_{0000},\hfill\\
K_{1000},\,K_{0100},\,K_{0010},\,K_{0001},\,K_{0011},\,K_{0101},\,K_{1001},\,K_{0110},\,K_{1010},\,K_{1100}
\end{array}
\]
where $W_{i}=\{(a_{1},a_{2},a_{3},a_{4})\in(\ZZ/2\ZZ)^{4}\,|\,a_{i}=0\}$.
By results of Nikulin, \cite{Nikulin}, the lattice $K$ is the minimal
primitive sub-lattice of $H^{2}(X,\ZZ)$ containing the $(-2)$-curves
$K_{abcd}$. The discriminant group $K^{\vee}/K$ is isomorphic to
$(\ZZ_{2})^{6}$ and the discriminant form of $K$ is isometric to
the discriminant form of $U(2)^{\oplus3}$. 
\begin{lem}
(See \cite[Remark 2.3]{GS}) The Néron-Severi group $\NS(X)$ is generated
by $K$ and $v_{17}:=\frac{1}{2}(L+\omega_{4d})$, where $L$ is the
positive generator of $K^{\perp}$ with $L^{2}=4d$ (here $d=\frac{k(k+1)}{2}$),
and if $L^{2}=0$ mod $8$,
\[
\omega_{4d}=K_{0000}+K_{1000}+K_{0100}+K_{1100},
\]
if $L^{2}=4$ mod $8$,
\[
\omega_{4d}=K_{0001}+K_{0010}+K_{0011}+K_{1000}+K_{0100}+K_{1100}.
\]
\end{lem}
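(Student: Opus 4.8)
The plan is to pin down $\NS(X)$ as an explicit index-$2$ overlattice of $\ZZ L\oplus K$ by a discriminant-form argument, and then to check that $v_{17}=\tfrac12(L+\omega_{4d})$ generates the glue. First I would compute the transcendental lattice. Since $X=\Km(B)$, one has $T(X)\cong T(B)(2)$ (\cite{Nikulin},\,\cite{Morrison}); and since $B$ is generic with $\NS(B)=\ZZ M$, $M^{2}=k(k+1)=2d$, the lattice $T(B)$ is the orthogonal complement of a primitive vector of square $2d$ in the unimodular lattice $U^{\oplus 3}$, hence $T(B)\cong U^{\oplus 2}\oplus\langle -2d\rangle$ and $T(X)\cong U(2)^{\oplus 2}\oplus\langle -4d\rangle$. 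Thus $|\Disc T(X)|=4d\cdot 2^{4}=2^{6}d$. As $\NS(X)=T(X)^{\perp}$ inside the unimodular lattice $H^{2}(X,\ZZ)$, we get $|\Disc\NS(X)|=|\Disc T(X)|=2^{6}d$, while $|\Disc(\ZZ L\oplus K)|=L^{2}\cdot 2^{6}=2^{8}d$; hence $[\NS(X):\ZZ L\oplus K]=2$.

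Next I would use that $K$ is saturated in $\NS(X)$. The order-$2$ glue group $G=\NS(X)/(\ZZ L\oplus K)\subset(\ZZ L\oplus K)^{\vee}/(\ZZ L\oplus K)$ must inject into $(\ZZ L)^{\vee}/\ZZ L\cong\ZZ/4d$ under the projection (otherwise $G$ would meet $K^{\vee}/K$ nontrivially, contradicting saturation), so its image is the $2$-torsion $\langle\tfrac12 L\rangle$; hence $\NS(X)=(\ZZ L\oplus K)+\ZZ w$ with $w=\tfrac12 L+u$, $u\in K^{\vee}$. Evenness of $\NS(X)$ forces $w^{2}=\tfrac14 L^{2}+u^{2}=d+u^{2}\in 2\ZZ$, i.e. $q_{K}(\bar u)\equiv d\pmod 2$ (note $q_{K}$, the discriminant form of $U(2)^{\oplus 3}$, is $\{0,1\}$-valued). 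To see $v_{17}$ is of this shape I would invoke the code description of the Kummer lattice: with $N=\bigoplus_{p}\ZZ K_{p}$ one has $K/N=RM(1,4)$ (generated by the all-ones word and the $\chi_{W_{i}}$) and $K^{\vee}/N=RM(2,4)$, so $\tfrac12\sum_{p\in S}K_{p}\in K^{\vee}$ iff $|S\cap c|$ is even for every $c\in RM(1,4)$, and lies in $K$ iff $\chi_{S}\in RM(1,4)$. For $S=\mathrm{supp}(\omega_{4d})$ a direct check in either case gives that $|S|$ and all $|S\cap W_{i}|$ are even, so $\tfrac12\omega_{4d}\in K^{\vee}$; that $|S|\in\{4,6\}$ is not a weight of $RM(1,4)$ (whose nonzero weights are $8$ and $16$) shows $\overline{\tfrac12\omega_{4d}}\neq0$ in $K^{\vee}/K$; and $(\tfrac12\omega_{4d})^{2}=-\tfrac12|S|$ equals $-2$ if $d$ is even ($|S|=4$, i.e. $L^{2}\equiv0\bmod8$) and $-3$ if $d$ is odd ($|S|=6$, i.e. $L^{2}\equiv4\bmod8$), so $q_{K}(\overline{\tfrac12\omega_{4d}})\equiv d\pmod 2$. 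Hence $v_{17}=\tfrac12(L+\omega_{4d})$ is an admissible glue vector, and $\Lambda:=(\ZZ L\oplus K)+\ZZ v_{17}$ is an even overlattice of index $2$ in which $K$ is saturated.

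It remains to identify $\Lambda$ with $\NS(X)$, and this is the only genuinely delicate point. By Witt's theorem the orthogonal group $O(q_{K})\cong O^{+}_{6}(\FF_{2})$ acts transitively on the nonzero vectors of $K^{\vee}/K$ of a given $q_{K}$-value, while $O(K)\twoheadrightarrow O(q_{K})$ (a property of the Kummer lattice, \cite{Nikulin}), with $O(K)$ realized by relabelings of the sixteen curves $K_{p}$; hence after such a relabeling the geometric glue vector $w$ of $\NS(X)$ may be taken equal to $v_{17}$, giving $\NS(X)=K+\ZZ v_{17}$ with $\omega_{4d}$ as stated. (For a prescribed labeling, the explicit $\omega_{4d}$ making this identity hold is exactly what is recorded in \cite[Remark 2.3]{GS}.) The index computation and the binary-code verifications are routine; the substance lies in the transcendental-lattice computation pinning the index at $2$, together with this final transitivity/identification of the precise glue vector.
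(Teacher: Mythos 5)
The paper does not actually prove this lemma: it is quoted verbatim from \cite[Remark 2.3]{GS}, so your self-contained argument is necessarily a different route, and it is essentially the correct one (it is, in substance, the overlattice/glue computation that Garbagnati--Sarti carry out). Your index computation is right: $T(X)\cong T(B)(2)\cong U(2)^{\oplus2}\oplus\la-4d\ra$ gives $|\Disc\NS(X)|=2^{6}d$ against $|\Disc(\ZZ L\oplus K)|=2^{8}d$, forcing index $2$; saturation of $K$ forces the glue to project isomorphically to the $2$-torsion of $(\ZZ L)^{\vee}/\ZZ L$; and your $RM(1,4)/RM(2,4)$ verification that $\tfrac12\omega_{4d}\in K^{\vee}\setminus\tfrac12(K+N)$ with $q_{K}(\overline{\tfrac12\omega_{4d}})\equiv d\pmod 2$ checks out in both parity cases. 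Three small points deserve a word. First, you need $\bar u\neq 0$ in $K^{\vee}/K$ before invoking transitivity; this is immediate because $L$ is by hypothesis the positive generator of $K^{\perp}$ in $\NS(X)$, so $\tfrac12L\notin\NS(X)$. Second, your parenthetical ``$O(K)$ realized by relabelings'' is loose: the relabelings form $AGL(4,\FF_{2})$, whose image in $O(q_{K})\cong O_{6}^{+}(\FF_{2})\cong S_{8}$ is only $\Omega_{6}^{+}(\FF_{2})\cong A_{8}$; fortunately $A_{8}$ is still transitive on the $35$ singular and the $28$ nonsingular nonzero vectors, so the conclusion survives, but this needs to be said (or one should appeal directly to Nikulin's surjectivity of $O(K)\to O(q_{K})$ and accept an abstract rather than relabeling isometry). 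Third, as you yourself acknowledge, this method pins down $\omega_{4d}$ only up to a relabeling of the sixteen curves; since the labeling $K_{abcd}$ is itself a convention, that is exactly the content of the statement, but it means the \emph{particular} words displayed in the lemma are not derivable without fixing the convention of \cite{GS} --- which is presumably why the paper simply cites that reference. What your approach buys is a conceptual explanation of \emph{why} there are exactly two shapes of $\omega_{4d}$, governed by $d\bmod 2$ through the evenness constraint $q_{K}(\bar u)\equiv d\pmod 2$, something the bare citation hides.
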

One has moreover
\begin{lem}
\label{lem:The-discriminant-group}(\cite[Remark 2.11]{GS}) The discriminant
group of $\NS(X)$ is isomorphic to $(\ZZ/2\ZZ)^{4}\times\ZZ/4d\ZZ$.
Suppose that $d=4$ mod $8$. Then $\NS(X)^{\vee}/\NS(X)$ is generated
by 
\[
\begin{array}{c}
w_{1}=\frac{1}{2}(v_{6}+v_{8}+v_{10}+v_{12}),\,\,\,w_{2}=\frac{1}{2}(v_{12}+v_{13}+v_{14}+v_{15}),\hfill\\
w_{3}=\frac{1}{2}(v_{11}+v_{13}+v_{14}+v_{16}),\,\,\,w_{4}=\frac{1}{2}(v_{9}+v_{10}+v_{12}+v_{13}),\hfill\\
w_{5}=\frac{1}{2}(v_{6}+v_{12}+v_{13})+\frac{1}{4d}(v_{7}+v_{8}+v_{9}+v_{10}+(1+2d)v_{11}+v_{16}-2v_{17})
\end{array}
\]
Suppose that $d=0$ mod $8$. Then $\NS(X)^{\vee}/\NS(X)$ is generated
by 
\[
\begin{array}{c}
w_{1}=\frac{1}{2}(v_{6}+v_{12}+v_{14}+v_{16}),\,\,\,w_{2}=\frac{1}{2}(v_{6}+v_{13}+v_{15}+v_{16}),\hfill\\
w_{3}=\frac{1}{2}(v_{6}+v_{8}+v_{10}+v_{12}),\,\,\,w_{4}=\frac{1}{2}(v_{6}+v_{8}+v_{9}+v_{13}),\hfill\\
w_{5}=\frac{1}{2}(v_{11}+v_{12}+v_{13})+\frac{1}{4d}((1+2d)v_{6}+v_{7}+v_{8}+v_{16}-2v_{17})
\end{array}
\]
In both cases, the discriminant form of $\NS(X)$ is isometric to
the discriminant form of $U(2)^{\oplus3}\oplus\la4d\ra$ and the transcendent
lattice $T_{X}=\NS(X)^{\perp}$ is isomorphic to $U(2)^{\oplus3}\oplus\la-4d\ra$.
\end{lem}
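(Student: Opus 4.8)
The plan is to read everything off from the description of $\NS(X)$ recorded in the previous lemma, together with Nikulin's calculus of discriminant forms of overlattices \cite{Nikulin}. By that lemma, $N:=\ZZ L\oplus K$ is a sublattice of $\NS(X)$ of index $2$, the overlattice being generated by $N$ and $v_{17}=\tfrac12(L+\omega_{4d})$. Since $L\perp K$ and $v_{17}\in H^{2}(X,\ZZ)$, the class $\tfrac12\omega_{4d}$ lies in $K^{\vee}$, and as $2v_{17}=L+\omega_{4d}\in N$ the image $\bar v_{17}=\big(\tfrac12\bar L,\tfrac12\overline{\omega_{4d}}\,\big)$ is an element of order $2$ of the discriminant group $A_{N}=A_{\ZZ L}\oplus A_{K}\cong\ZZ/4d\ZZ\oplus u(2)^{\oplus 3}$, where we use $q_{K}\cong u(2)^{\oplus 3}$ as recalled above. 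The first step is to verify that $\bar v_{17}$ is isotropic for $q_{N}$. Here one uses that, by the preceding lemma, $\omega_{4d}$ is a sum of $r$ pairwise disjoint $(-2)$-curves with $r=4$ when $L^{2}\equiv 0\pmod 8$ (equivalently $d$ even) and $r=6$ when $L^{2}\equiv 4\pmod 8$, so that $q_{K}\big(\tfrac12\overline{\omega_{4d}}\,\big)\equiv\tfrac14\omega_{4d}^{2}=-\tfrac r2\pmod{2\ZZ}$; combined with $q_{\ZZ L}\big(\tfrac12\bar L\big)\equiv\tfrac14 L^{2}=d\pmod{2\ZZ}$ this gives $q_{N}(\bar v_{17})\equiv d-\tfrac r2\pmod{2\ZZ}$, which vanishes in the ranges $d\equiv 0,4\pmod 8$ (where $d$ is even and $r=4$).

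Once isotropy is established, the overlattice formula of \cite{Nikulin} gives $A_{\NS(X)}\cong\bar v_{17}^{\perp}/\langle\bar v_{17}\rangle$ with the induced quadratic form; in particular $\#A_{\NS(X)}=\#A_{N}/4=2^{4}\cdot 4d$. The substance of the argument is then the explicit determination, inside $\ZZ/4d\ZZ\oplus(\ZZ/2\ZZ)^{6}$, of $\bar v_{17}^{\perp}$ and of a transversal of $\langle\bar v_{17}\rangle$ in it, carried out separately in the two cases $d\equiv 0$ and $d\equiv 4\pmod 8$: using the explicit $\omega_{4d}$ together with the explicit generators $v_{6},\dots,v_{16}$ of $K$ (hence the $u(2)^{\oplus 3}$-structure of $A_{K}$), one produces the five classes $w_{1},\dots,w_{5}$ of the statement, checks that they generate $A_{\NS(X)}$, and evaluates the values $q(w_{i})$ and the pairings $b(w_{i},w_{j})$. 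These data show that the group is $(\ZZ/2\ZZ)^{4}\times\ZZ/4d\ZZ$ and that the induced form splits orthogonally into copies of $u(2)$ and a generator of the $\ZZ/4d\ZZ$-part of square $\pm\tfrac1{4d}$, i.e. it agrees with the discriminant form recorded in the statement (using $-u(2)\cong u(2)$). This routine but delicate book-keeping — and in particular keeping the two congruence classes of $d$ apart — is the main obstacle.

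Finally, $T_{X}=\NS(X)^{\perp}$ is a primitive sublattice of the even unimodular lattice $H^{2}(X,\ZZ)\simeq U^{\oplus 3}\oplus E_{8}(-1)^{\oplus 2}$, so it has signature $(2,3)$ and discriminant form $-q_{\NS(X)}$, which we have just determined. One then identifies $T_{X}$ with the lattice of the statement: either directly, through the isometry $T_{X}\simeq T_{B}(2)$ valid for Kummer surfaces \cite{Morrison} together with the transcendental lattice $T_{B}\simeq U^{\oplus 2}\oplus\langle -2d\rangle$ of a generic polarized abelian surface (and a check that $T_{B}(2)$ carries the discriminant form $-q_{\NS(X)}$), or else by appealing to Nikulin's existence-and-uniqueness theorem for even indefinite lattices in terms of signature and discriminant form \cite{Nikulin}.
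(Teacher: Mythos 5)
Your argument is sound and reaches the same conclusion, but by a genuinely different route. The paper's proof is a direct computation: it inverts the full $17\times 17$ Gram matrix $(v_iv_j)$ to get a basis of $\NS(X)^{\vee}$, reads off the generators $w_1,\dots,w_5$ of the discriminant group from the columns, and then tabulates the Gram matrix $(w_iw_j)$ to identify the form. You instead exploit the overlattice structure $\ZZ L\oplus K\subset\NS(X)$ of index $2$ and apply Nikulin's gluing calculus: since $q_{\ZZ L}\cong\langle\tfrac{1}{4d}\rangle$ and $q_K\cong u(2)^{\oplus 3}$ are already known, the discriminant form of $\NS(X)$ is $\bar v_{17}^{\perp}/\langle\bar v_{17}\rangle$ for the isotropic class $\bar v_{17}$, and your isotropy check and order count $\#A_{\NS(X)}=4d\cdot 2^{6}/4$ are correct. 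Your route is more structural and avoids the $17\times 17$ inversion, at the cost of having to locate $\bar v_{17}$ in $A_{\ZZ L}\oplus A_K$ and compute $\bar v_{17}^{\perp}/\langle\bar v_{17}\rangle$ explicitly; both you and the paper defer that final book-keeping (the explicit $w_i$ and the values $q(w_i)$, $b(w_i,w_j)$) to an unrecorded computation, so you are not below the paper's own standard of detail there. The identification of $T_X$ via unimodularity of $H^2(X,\ZZ)$ and $U(-2)\simeq U(2)$ matches the paper.

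One point you should have caught: the exponent $3$ in the statement cannot be right. The discriminant form of $U(2)^{\oplus 3}\oplus\langle 4d\rangle$ lives on $(\ZZ/2\ZZ)^{6}\times\ZZ/4d\ZZ$, which contradicts the group $(\ZZ/2\ZZ)^{4}\times\ZZ/4d\ZZ$ asserted in the same lemma (and confirmed by your order count $2^{4}\cdot 4d$), and $U(2)^{\oplus 3}\oplus\langle -4d\rangle$ has rank $7$ while $T_X$ has rank $5$ and signature $(2,3)$, as you yourself note. The correct exponent is $2$, consistent with the $5\times 5$ matrix of $(w_iw_j)$ displayed in the paper's proof and with the paper's later use of $L_2=U(2)\oplus U(2)\oplus\langle-4d\rangle$; your alternative derivation $T_X\simeq T_B(2)$ with $T_B\simeq U^{\oplus 2}\oplus\langle -2d\rangle$ in fact produces exactly $U(2)^{\oplus 2}\oplus\langle -4d\rangle$, so you had the correct answer in hand but did not flag its disagreement with the printed statement.
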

\begin{proof}
The columns of the inverse of the intersection matrix $(v_{i}v_{j})_{1\leq i,j\leq17}$
is a base of $\NS(X)^{\vee}$ in the base $v_{1},\dots,v_{17}$. From
that data we obtain the generators $w_{1},\dots,w_{5}$ of $\NS(X)^{\vee}/\NS(X)$.
The matrix $(w_{i}w_{j})_{1\leq i,j\leq5}$ is
\[
\left(\begin{array}{ccccc}
0 & \frac{1}{2} & 0 & 0 & 0\\
\frac{1}{2} & 0 & 0 & 0 & 0\\
0 & 0 & 0 & \frac{1}{2} & 0\\
0 & 0 & \frac{1}{2} & 0 & 0\\
0 & 0 & 0 & 0 & \frac{1}{4d}
\end{array}\right)\in M_{5}(\QQ/\ZZ),
\]
one has moreover $w_{i}^{2}=0$ mod $2\ZZ$ for $1\leq i\leq4$ and
$w_{5}^{2}=\frac{1}{4d}$ mod $2\ZZ$. Thus the discriminant form
\[
q:\NS(X)^{\vee}/\NS(X)\to\QQ/2\ZZ
\]
is isometric to the discriminant form of $U(2)^{\oplus3}\oplus\la4d\ra.$
Since $H^{2}(X,\ZZ)$ is unimodular, and $U(-2)\simeq U(2)$, we obtain
$T_{X}$ (for more details see e.g. \cite[Chap. 14, Proposition 0.2]{Huyb}).
\end{proof}
In Section \ref{sec1:Two-Nikulin-configurations}, we associated to
$L$ and to $A_{j}$ the divisors 
\[
L_{j}=(2k+1)L-2k(k+1)A_{j},\,\,A_{j}'=2L-(2k+1)A_{1}.
\]
The vector space endomorphism 
\[
\theta_{j}:\NS(X)\otimes\QQ\to\NS(X)\otimes\QQ
\]
defined by $\theta_{j}(A_{i})=A_{i}$ for $i\neq j$ and 
\[
\theta_{j}(A_{j})=A_{j}',\;\theta_{j}(L)=L_{j}
\]
is an involution, and we will see that it is an isometry (cf. Lemma
\ref{lem:The-morphisms isometries}). Let us define 
\[
\Phi_{1}=\theta_{2}\theta_{1}.
\]
The endomorphism $\Phi_{1}$ has infinite order, its characteristic
polynomial $\det(T\text{I}_{\text{d}}-\Phi_{1})$ is the product of
$(T-1)^{15}$ and the Salem polynomial 
\[
T^{2}+(2-4k^{2})T+1.
\]
The aim of this section is to prove the following result:
\begin{thm}
\label{thm:There-exists-an-invol}The automorphism $\Phi_{1}$ extends
to an effective Hodge isometry $\Phi$ of $H^{2}(X,\ZZ)$ and there
exists an automorphism $\iota$ of $X$ which acts on $H^{2}(X,\ZZ)$
by $\iota^{*}=\Phi$. 
\end{thm}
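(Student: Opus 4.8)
The strategy has two parts. First, I would check that $\Phi_1$ (which a priori is only defined on $\NS(X)\otimes\QQ$) is actually an integral isometry of $\NS(X)$, and that it preserves the discriminant data in a way that allows it to be glued with the identity on the transcendental lattice $T_X$. Concretely: since $\theta_j$ fixes all $A_i$ with $i\neq j$ and acts on the rank-2 lattice $\la L, A_j\ra$ by $L\mapsto(2k+1)L-2k(k+1)A_j$, $A_j\mapsto 2L-(2k+1)A_j$, one verifies by direct computation that $\theta_j$ preserves the intersection form (the $2\times 2$ matrix of the restriction to $\la L,A_j\ra$ is preserved — this is Lemma \ref{lem:The-morphisms isometries} referenced in the text). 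One must then check that $\theta_j$ maps $\NS(X)$ into itself, using the explicit description of $\NS(X)$ as an overlattice of $\ZZ L\oplus K$ from Lemma \ref{lem:At most 4 beta} / the generators $v_1,\dots,v_{17}$; the point is that the coefficients $2k+1$, $2k(k+1)$, $2$ are integers and the half-integral classes in $\NS(X)$ are permuted correctly. Hence $\Phi_1=\theta_2\theta_1\in O(\NS(X))$.

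**Second, extend $\Phi_1$ to $H^2(X,\ZZ)$ and apply Torelli.** I would set $\Phi = \Phi_1 \oplus \mathrm{id}_{T_X}$ on $\NS(X)\oplus T_X$. To see this extends to an isometry of the unimodular lattice $H^2(X,\ZZ)$, one uses the standard gluing criterion (Nikulin): an isometry of $\NS(X)$ and an isometry of $T_X$ glue to an isometry of $H^2(X,\ZZ)$ iff they act compatibly on the discriminant groups under the canonical isomorphism $\NS(X)^\vee/\NS(X)\cong T_X^\vee/T_X$ (anti-isometry of discriminant forms). Since we take the identity on $T_X$, it suffices that $\Phi_1$ act trivially on the discriminant group $\NS(X)^\vee/\NS(X)$; this I would check using the explicit generators $w_1,\dots,w_5$ from Lemma \ref{lem:The-discriminant-group} — again a finite computation, noting that $\Phi_1$ fixes $15$ of the $A_i$ and the relevant combinations land back in $\NS(X)$. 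Then $\Phi$ is a Hodge isometry (it is the identity on $T_X\supset H^{2,0}$). To get an \emph{automorphism} of $X$ by the Torelli theorem, I must verify $\Phi$ is \emph{effective}: it maps some ample class to an ample class, equivalently it preserves a chamber of the positive cone modulo the Weyl group generated by $(-2)$-curves. Here is where Theorem \ref{thm:main1} and Proposition \ref{prop:Suppose-.-Thena)} pay off: $\theta_j$ sends the nef-and-big class $L$ to $L_j$, which by Proposition \ref{prop:Suppose-.-Thena)}(a) is again nef and big, and it sends the $(-2)$-curves $A_1',A_2,\dots,A_{16}$ (the classes orthogonal to $L_j$) to $(-2)$-curves; composing, $\Phi=\theta_2\theta_1$ sends an ample class into the positive cone and sends effective $(-2)$-classes to effective $(-2)$-classes, so after possibly composing with a product of reflections in $(-2)$-curves (which are realized by automorphisms only up to sign/Weyl action — but in fact one argues $\Phi$ already preserves the ample cone, or adjusts by $\pm\mathrm{id}$ which is harmless since $-\mathrm{id}$ is not effective, so no adjustment is needed) one concludes $\Phi$ is effective. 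Finally the strong Torelli theorem gives a unique $\iota\in\aut(X)$ with $\iota^* = \Phi$.

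**The main obstacle is the effectivity step.** Showing $\Phi_1\in O(\NS(X))$ and that it glues to $H^2(X,\ZZ)$ is routine (finite) lattice bookkeeping with the generators already written down in the excerpt. The substantive point is to verify that $\Phi$ (not merely $\Phi_1$) maps the ample cone to itself — i.e. that no reflection correction is needed — and for this the natural route is: $\theta_1$ corresponds geometrically to passing through the birational model of Proposition \ref{prop:Suppose-.-Thena)}(c) where $L'=L_1$ is the polarization pulled back from $\PP^{k^2+k+1}$ and $A_1'$ plays the role that $A_1$ played for $L$, so $\theta_1$ is a \emph{Hodge isometry realized by an abstract isomorphism of the two polarized data} $(X,L,\{A_i\})\cong(X,L_1,\{A_1',A_{i\ge2}\})$; since both data arise from a Kummer surface with its marked configuration, this isometry is effective, and then $\Phi=\theta_2\theta_1$ is a composition of effective Hodge isometries hence effective. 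One should be a little careful that $\theta_j$ itself need not be induced by an automorphism (indeed Theorem \ref{thm:Main 3} says it is not, for $k\ge 2$), but it \emph{is} an effective isometry in the sense of preserving the positive cone and sending effective classes to effective classes, and effectivity in that sense is closed under composition — that is all Torelli needs for the product $\theta_2\theta_1$, whereas what fails for a single $\theta_j$ is only that it doesn't fix the period/doesn't come from a biregular map because it's an isometry between two \emph{different} markings rather than a self-map.
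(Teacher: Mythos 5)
Your proposal follows essentially the same route as the paper: verify that $\Phi_1=\theta_2\theta_1$ is an integral isometry of $\NS(X)$, glue it with the identity on $T_X$ by checking triviality of the induced action on the discriminant group via the generators $w_1,\dots,w_5$ (the paper's computation $\theta_j(w_5)=(1-2k^2)w_5$ with $(1-2k^2)^2\equiv 1 \bmod 4d$ is exactly the ``finite computation'' you defer), and establish effectivity by observing that each $\theta_j$ carries an ample class for one Kummer configuration to an ample class for the other, so the composition preserves the ample cone; then apply the strong Torelli theorem. The paper implements the effectivity step concretely by applying \cite[Proposition 4.3]{GS} to the divisor $mL-\frac{1}{2}\sum A_i$ and its image, but this is the same idea as your argument, so there is nothing substantive to add.
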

Let us start by the following Lemma: 
\begin{lem}
\label{lem:The-morphisms isometries}The morphisms $\t_{1},\,\t_{2},\,\Phi_{1}$
preserve $\NS(X)$ and are isometries of $\NS(X)$.
\end{lem}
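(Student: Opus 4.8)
The statement is essentially bookkeeping about the lattice $\NS(X)$ described in Lemma~\ref{lem:The-discriminant-group}, so the plan is to reduce everything to a computation with the explicit generators $v_1,\dots,v_{17}$. First I would observe that $\theta_1,\theta_2$ are a priori only defined as $\QQ$-linear automorphisms of $\NS(X)\otimes\QQ$, so the first task is to show they actually preserve the integral lattice $\NS(X)$ and not just $\ZZ L\oplus K$. On $\ZZ L \oplus K$ this is clear from the formulas $\theta_j(A_j)=2L-(2k+1)A_j$, $\theta_j(L)=(2k+1)L-2k(k+1)A_j$, $\theta_j(A_i)=A_i$ ($i\neq j$), all of which have integer coefficients. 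So I need to check that $\theta_j$ sends the one extra generator $v_{17}=\frac12(L+\omega_{4d})$ into $\NS(X)$. Since $\omega_{4d}$ is an explicit sum of four curves $K_{abcd}$, I compute $\theta_j(v_{17}) = \frac12(\theta_j(L) + \theta_j(\omega_{4d}))$ and must verify it lies in $\NS(X)$; here the key point is that $\theta_j(L)-L = 2kL - 2k(k+1)A_j$ is $2$-divisible in $K\oplus\ZZ L$, so $\theta_j(v_{17}) - v_{17} = kL - k(k+1)A_j + \frac12(\theta_j(\omega_{4d})-\omega_{4d})$, and $\theta_j(\omega_{4d})-\omega_{4d}$ is either $0$ (if $A_j$ is not among the four curves in $\omega_{4d}$) or of the form $A_j' - A_j = 2L-(2k+2)A_j$, which is again even; either way the whole difference is in $K\oplus\ZZ L \subset \NS(X)$. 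Hence $\theta_j(\NS(X))\subseteq\NS(X)$, and since $\theta_j$ is an involution on $\NS(X)\otimes\QQ$ the reverse inclusion follows, so $\theta_j$ is a $\ZZ$-linear automorphism of $\NS(X)$; consequently $\Phi_1 = \theta_2\theta_1$ is too.

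Next I would check the isometry property. Since $\NS(X)\otimes\QQ$ is spanned by $L$ and the $A_i$, and these have the well-known intersection numbers $L^2 = 2k(k+1)$, $A_i^2=-2$, $LA_i=0$, $A_iA_j=0$ ($i\neq j$), it suffices to verify that $\theta_j$ preserves all pairwise products among $\{L\}\cup\{A_i\}$. For indices away from $j$ nothing changes. The remaining checks are the three identities
\[
(\theta_j L)^2 = L^2,\qquad (\theta_j A_j)^2 = A_j^2,\qquad (\theta_j L)(\theta_j A_j) = L A_j = 0,
\]
together with $(\theta_j L)(\theta_j A_i) = 0$ and $(\theta_j A_j)(\theta_j A_i)=0$ for $i\neq j$. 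The first, $(2k+1)^2 L^2 - 2\cdot(2k+1)\cdot 2k(k+1)\cdot 0\cdot \text{(cross)} + \dots$, is exactly the computation $L'^2 = (2k+1)^2 2k(k+1) - 8k^2(k+1)^2 = 2k(k+1)$ already carried out before Proposition~\ref{prop:Suppose-.-Thena)}; the second is $(2L-(2k+1)A_j)^2 = 8k(k+1) - 2(2k+1)^2 = -2$, also already recorded; and $L'A_1'=0$ was noted there as well. The mixed products with $A_i$, $i\neq j$, vanish because $LA_i = A_jA_i = 0$. So $\theta_j$ is an isometry of $\NS(X)$, and therefore so is the composition $\Phi_1 = \theta_2\theta_1$.

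**Expected main obstacle.** The only genuinely non-routine point is the integrality check for $v_{17}$: one must make sure that after applying $\theta_j$ the half-integral correction term $\frac12\omega_{4d}$ still pairs up correctly, i.e. that $\theta_j(\omega_{4d}) - \omega_{4d}$ lands in $2\,\NS(X)$ (indeed in $2(K\oplus\ZZ L)$). This needs a small case analysis according to whether the fixed index $j$ corresponds to one of the four curves appearing in $\omega_{4d}$ and according to the two cases $L^2\equiv 0$ or $4 \pmod 8$ for the shape of $\omega_{4d}$; but in every case the difference is either $0$ or equals $A_j'-A_j = 2(L-(k+1)A_j)$, manifestly even. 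Everything else is a direct substitution into the known intersection form, reusing the numerical identities established in Section~\ref{sec1:Two-Nikulin-configurations}. Once $\theta_1$ and $\theta_2$ are shown to be isometries of $\NS(X)$, the claim for $\Phi_1$ is immediate since isometries form a group.
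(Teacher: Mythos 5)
Your proof takes essentially the same route as the paper's, which simply asserts that $\theta_j$ preserves the lattice generated by $K$, $L$ and $v_{17}$ and then verifies that all pairwise intersection numbers among $L$ and the $A_i$ are preserved; you fill in the integrality check in exactly the intended way. One small gloss: preservation of $\ZZ L\oplus K$ is not literally immediate from ``integer coefficients,'' since the Kummer lattice $K$ itself contains half-sums such as $\frac12\sum_{p}K_{p}$, but these are handled by precisely the observation you already make for $v_{17}$, namely that $A_j'-A_j=2\bigl(L-(k+1)A_j\bigr)$ is $2$-divisible in $\ZZ L\oplus\bigoplus\ZZ A_i$.
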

\begin{proof}
It is simple to check that $\theta_{j}$ preserves the lattice generated
by $K,L$ and $v_{17}=\frac{1}{2}(L+\omega_{4d})$. Since for all
$1\leq i,j\leq16$ one has $\theta_{j}(A_{i})\theta_{j}(A_{k})=A_{i}A_{k}$,
$\theta_{j}(L)\theta_{j}(A_{i})=LA_{i}=0$, $\theta_{j}(L)^{2}=L^{2}$,
$\theta_{j}$ is an isometry of $\NS(X)$, hence so is $\Phi_{1}=\theta_{2}\theta_{1}$. 
\end{proof}
Let $T_{X}=\NS(X)^{\perp}$. We define $\Phi_{2}:T_{X}\to T_{X}$
as the identity. The map $(\Phi_{1},\Phi_{2})$ is an isometry of
$\NS(X)\oplus T_{X}$.
\begin{lem}
\label{lem:The-map-extends}The morphism $(\Phi_{1},\Phi_{2})$ extends
to an isometry $\Phi$ of $H^{2}(X,\ZZ)$.
\end{lem}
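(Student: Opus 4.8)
The plan is to apply the standard gluing criterion for isometries of a unimodular lattice: given a primitive embedding $\NS(X)\oplus T_X \hookrightarrow H^2(X,\ZZ)$ with $H^2(X,\ZZ)$ unimodular, an isometry $(\Phi_1,\Phi_2)$ of $\NS(X)\oplus T_X$ extends to an isometry of $H^2(X,\ZZ)$ if and only if the induced maps on the discriminant groups agree under the canonical anti-isometry $\NS(X)^\vee/\NS(X) \xrightarrow{\sim} T_X^\vee/T_X$ (see e.g. \cite{Nikulin} or \cite[Chap.~14]{Huyb}). Since $\Phi_2 = \mathrm{id}$ on $T_X$, it acts trivially on $T_X^\vee/T_X$, so what must be checked is precisely that $\Phi_1$ acts as the identity on the discriminant group $\NS(X)^\vee/\NS(X)$.

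So the first step is to compute the action of $\theta_1$ and $\theta_2$ — hence of $\Phi_1 = \theta_2\theta_1$ — on $\NS(X)^\vee/\NS(X)$ using the explicit generators $w_1,\dots,w_5$ from Lemma \ref{lem:The-discriminant-group}. I would work in the basis $v_1,\dots,v_{17}$, in which $L = v_{17}\cdot 2 - \omega_{4d}$ and the $A_i$ are (up to the identification $A_i = K_{abcd}$) among the $v_j$, $j\le 16$. The key point is that $\theta_j$ fixes $L$ and $A_i$ for $i\neq j$, and sends $A_j \mapsto 2L - (2k+1)A_j$, $L \mapsto (2k+1)L - 2k(k+1)A_j$. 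Modulo $\NS(X)$ (i.e. computing in $\NS(X)^\vee/\NS(X)$), the images of $L$ and $A_j$ change only by elements whose pairing with everything in $\NS(X)$ is integral, so one expects $\theta_j \equiv \mathrm{id}$ on the discriminant group; concretely, $\theta_j(x) - x$ lies in $\NS(X)$ for every $x \in \NS(X)$ because the coefficients $2k$, $2k+1$, $2k(k+1)$ are integers and $L, A_j \in \NS(X)$. Then, since $w_1,\dots,w_5 \in \NS(X)^\vee$ are $\QQ$-linear combinations of the $v_i$ and hence of $L$ and the $A_i$, one checks directly that $\theta_j(w_\ell) - w_\ell \in \NS(X)$ for each $\ell$, so $\bar\theta_j = \mathrm{id}$ on $\NS(X)^\vee/\NS(X)$, and therefore $\bar\Phi_1 = \mathrm{id}$ as well.

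With $\bar\Phi_1 = \mathrm{id}$ on $\NS(X)^\vee/\NS(X)$ and $\bar\Phi_2 = \mathrm{id}$ on $T_X^\vee/T_X$, the two induced maps trivially commute with the gluing anti-isometry, so Nikulin's criterion gives an isometry $\Phi$ of $H^2(X,\ZZ)$ restricting to $(\Phi_1,\Phi_2)$ on $\NS(X)\oplus T_X$. This is the desired extension.

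The main obstacle is the bookkeeping in the second step: one has to be careful that the $A_i$ do not literally equal the $v_j$ but are related to the $K_{abcd}$ by a fixed labelling, and that the generator $v_{17} = \frac12(L+\omega_{4d})$ mixes $L$ with a half-sum of the $A_i$'s — so $\theta_j$ acting on $v_{17}$ must still be shown to land back in $\NS(X)^\vee$ with the same class mod $\NS(X)$. The cleanest way around this is the coordinate-free observation above: for any $x \in \NS(X)^\vee$ and any $y \in \NS(X)$, $(\theta_j(x) - x)\cdot y = x\cdot(\theta_j(y) - y) \in \ZZ$ because $\theta_j - \mathrm{id}$ maps $\NS(X)$ into $\ZZ L + \ZZ A_j \subseteq \NS(X)$ (as $\theta_j$ is an integral isometry once restricted to the sublattice $\ZZ L \oplus \ZZ A_j \oplus (\text{rest})$, already used in Lemma \ref{lem:The-morphisms isometries}); hence $\theta_j(x) - x \in \NS(X)^\vee$ pairs integrally with $\NS(X)$, i.e. lies in $\NS(X)$. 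This makes $\bar\theta_j = \mathrm{id}$ immediate without expanding the $w_\ell$, and the rest of the proof is the invocation of the gluing lemma.
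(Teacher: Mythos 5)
Your overall framework is the right one and matches the paper's: reduce to Nikulin's gluing criterion for the primitive orthogonal pair $\NS(X)\oplus T_X$ inside the unimodular lattice $H^{2}(X,\ZZ)$, and since $\Phi_{2}=\mathrm{id}$ the whole content is the action of $\Phi_{1}$ on the discriminant group $\NS(X)^{\vee}/\NS(X)$. But the key computation is wrong. Your ``coordinate-free observation'' shows correctly that $(\theta_{j}(x)-x)\cdot y=x\cdot(\theta_{j}(y)-y)\in\ZZ$ for all $y\in\NS(X)$, hence that $\theta_{j}(x)-x\in\NS(X)^{\vee}$; but you then conclude ``pairs integrally with $\NS(X)$, i.e.\ lies in $\NS(X)$,'' which is a non sequitur: pairing integrally with $\NS(X)$ is the \emph{definition} of lying in $\NS(X)^{\vee}$, and since $\NS(X)$ is far from unimodular (its discriminant group is $(\ZZ/2\ZZ)^{4}\times\ZZ/4d\ZZ$), this does not put $\theta_{j}(x)-x$ in $\NS(X)$. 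Likewise, the fact that $\theta_{j}$ preserves $\NS(X)$ says nothing about its action on $\NS(X)^{\vee}/\NS(X)$.

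The conclusion $\bar\theta_{j}=\mathrm{id}$ is in fact \emph{false}: the paper computes $\theta_{j}(w_{5})=(1-2k^{2})w_{5}$ on the order-$4d$ generator $w_{5}$, and $1-2k^{2}\not\equiv 1 \pmod{4d}$ for $k\geq 1$. This is not a minor point --- if each $\bar\theta_{j}$ were trivial, each $\theta_{j}$ would individually extend to an effective Hodge isometry of $H^{2}(X,\ZZ)$ and hence (by Torelli) come from an automorphism of $X$, contradicting Theorem \ref{thm:Main 3}; the paper's Remark \ref{rem15:Because-of-the} explicitly records that $\theta_{j}$ does \emph{not} extend unless $k=1$. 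What saves the lemma is that only the composite $\Phi_{1}=\theta_{2}\theta_{1}$ needs to act trivially, and it does because $(1-2k^{2})^{2}=1+2k(k+1)\cdot 2k(k-1)\equiv 1\pmod{4d}$ (together with $-w_{i}=w_{i}$ on the $2$-torsion generators $w_{1},\dots,w_{4}$). So you must actually carry out the discriminant-group computation for $\theta_{j}$ on the explicit generators and then square it; the general argument you propose in its place cannot work.
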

\begin{proof}
Let $L_{1},L_{2}$ be the lattices $L_{1}=\NS(X),$ $L_{2}=T_{X}=\NS(X)^{\perp}.$
Let us denote by 
\[
q_{i}:L_{i}^{\vee}/L_{i}\to\QQ/2\ZZ
\]
the discriminant form of $L_{i}$. By Lemma \ref{lem:The-discriminant-group}
and its proof, we know the form $q_{1}$ on the base $w_{i}$. \\
One has $L_{2}=U(2)\oplus U(2)\oplus\la-4d\ra$. Let us take the base
$e_{i},\,1\leq i\leq5$ of $L_{2}$ such that the intersection matrix
of the $e_{j}$'s is 
\[
(e_{i}e_{j})_{1\leq i,j\leq5}=-\left(\begin{array}{ccccc}
0 & 2 & 0 & 0 & 0\\
2 & 0 & 0 & 0 & 0\\
0 & 0 & 0 & 2 & 0\\
0 & 0 & 2 & 0 & 0\\
0 & 0 & 0 & 0 & 4d
\end{array}\right).
\]
The elements $w_{i}'=\frac{1}{2}e_{i}$ for $1\leq i\leq4$ and $w_{5}'=\frac{1}{4d}e_{5}$
are generators of $L_{2}^{\vee}/L_{2}$. Let 
\[
\phi:L_{2}^{\vee}/L_{2}\to L_{1}^{\vee}/L_{1}
\]
be the isomorphism (called the gluing map) defined by 
\[
\phi(w_{i}')=w_{i}.
\]
One has $q_{1}(\phi(\sum a_{i}w_{i}'))=-q_{2}(\sum a_{i}w_{i}')$
i.e. 
\[
q_{2}=-\phi^{*}q_{1}.
\]
Since $L_{1},L_{2}$ are primitive sub-lattices of the even unimodular
lattice $H^{2}(X,\ZZ)$ with $L_{2}=L_{1}^{\perp}$, the lattice $H^{2}(X,\ZZ)$
is obtained by gluing $L_{1}$ with $L_{2}$ by the gluing isomorphism
$\phi$. In other words $H^{2}(X,\ZZ)$ is generated by all the lifts
in $L_{1}^{\vee}\oplus L_{2}^{\vee}$ of the elements $(w_{i},w_{i}')$,
$i=1,\dots,5$ of the discriminant group of $L_{1}\oplus L_{2}$.
\\
According to general results (see e.g. \cite[Page 5]{Mc}), the element
$(\Phi_{1},\Phi_{2})$ of the orthogonal group of $L_{1}\oplus L_{2}$
extends to $H^{2}(X,\ZZ)$ if and only if the gluing map $\phi$ satisfies
$\phi\circ\Phi_{2}=\Phi_{1}\circ\phi$. A simple computation gives
that for $1\leq i\leq4$, one has $\theta_{j}w_{i}=-w_{i}=w_{i}$
(for $j\in\{1,2\}$), thus $\Phi_{1}(w_{i})=w_{i}$. Moreover we compute
that
\[
\theta_{j}(w_{5})=(1-2k^{2})w_{5}
\]
and since $(1-2k^{2})^{2}=1$ modulo $4d=2k(k+1)$, one gets $\Phi_{1}(w_{5})=\theta_{2}\theta_{1}w_{5}=\omega_{5}$.
Since by definition $\Phi_{2}(w_{i}')=w_{i}'$ for $i=1,\dots,5$,
we obtain the desired relation $\phi\circ\Phi_{2}=\Phi_{1}\circ\phi$.
\end{proof}
\begin{rem}
\label{rem15:Because-of-the}Because of the relation $\theta_{j}(w_{5})=(1-2k^{2})w_{5}$,
$j\in\{1,2\}$ at the end of the proof of Lemma \ref{lem:The-map-extends},
it is not possible to extend the involution $\theta_{j}$ to an isometry,
unless $k=1$. In that case, using the proof of Lemma \ref{lem:The-Hodge-isometry is effective}
below, the involution $\theta_{j}$ extends to an effective Hodge
isometry (with action by multiplication by $-1$ on $T_{X}$). The
resulting non-symplectic involution is in fact known under the name
of projection involution, see e.g. \cite{Keum}. 
\end{rem}
\begin{lem}
The morphism $\Phi$ is an Hodge isometry: its $\CC$-linear extension
$\Phi_{\CC}:H^{2}(X,\CC)\to H^{2}(X,\CC)$ preserves the Hodge decomposition. 
\end{lem}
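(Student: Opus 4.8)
The plan is to reduce the Hodge statement entirely to the behaviour of $\Phi$ on the transcendental lattice $T_X = \NS(X)^\perp$, since the Hodge structure of a K3 surface is concentrated there. Concretely, the Hodge decomposition $H^2(X,\CC) = H^{2,0}\oplus H^{1,1}\oplus H^{0,2}$ has $H^{2,0}(X) = \CC\omega_X$ with $\omega_X \in T_X\otimes\CC$, and $\NS(X)\otimes\CC \subset H^{1,1}(X)$. First I would recall from Lemma~\ref{lem:The-map-extends} that $\Phi$ was constructed as the extension of the isometry $(\Phi_1,\Phi_2)$ of $\NS(X)\oplus T_X$, where $\Phi_1$ acts on $\NS(X)$ and $\Phi_2 = \mathrm{id}$ on $T_X$. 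Thus $\Phi_\CC$ preserves the orthogonal decomposition $(\NS(X)\otimes\CC)\oplus(T_X\otimes\CC)$.

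The key step is then immediate: since $\Phi_\CC$ restricts to the identity on $T_X\otimes\CC$, it fixes in particular the line $\CC\omega_X = H^{2,0}(X)$; by conjugation it also fixes $H^{0,2}(X) = \CC\bar\omega_X$. It remains to check $\Phi_\CC\bigl(H^{1,1}(X)\bigr) = H^{1,1}(X)$. For this I would write $H^{1,1}(X) = \bigl(\NS(X)\otimes\CC\bigr)\oplus H^{1,1}_{T}$, where $H^{1,1}_T := H^{1,1}(X)\cap (T_X\otimes\CC)$ is the orthogonal complement of $\CC\omega_X\oplus\CC\bar\omega_X$ inside $T_X\otimes\CC$. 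Since $\Phi_\CC$ preserves $\NS(X)\otimes\CC$ (it acts there as the $\CC$-linear extension of $\Phi_1$) and acts as the identity on $T_X\otimes\CC$, it preserves each of these two pieces, hence preserves their sum $H^{1,1}(X)$. This gives that $\Phi_\CC$ preserves all three summands of the Hodge decomposition, so $\Phi$ is a Hodge isometry.

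I do not expect a genuine obstacle here; the only point requiring care is the bookkeeping that the extension $\Phi$ of Lemma~\ref{lem:The-map-extends} really does respect the orthogonal splitting $\NS(X)\otimes\CC\,\oplus\,T_X\otimes\CC$ over $\CC$ — but this is automatic because $\Phi$ was defined by extending the block-diagonal isometry $(\Phi_1,\Phi_2)$, and $\NS(X)\otimes\QQ$ and $T_X\otimes\QQ$ are defined over $\QQ$, so tensoring with $\CC$ does not mix them. One might phrase the conclusion slightly more efficiently by invoking directly the standard criterion that an isometry of $H^2(X,\ZZ)$ which acts as the identity on $T_X$ (equivalently: fixes $\omega_X$) is automatically a Hodge isometry; the short argument above is essentially a proof of that criterion in our situation.
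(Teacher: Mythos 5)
Your proof is correct and follows exactly the paper's argument, which is the one-line observation that $\Phi$ restricts to the identity on $T_{X}\otimes\CC$, the subspace containing the period $\omega_X$; you have merely spelled out the routine bookkeeping that this forces preservation of all three Hodge summands. No issues.
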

\begin{proof}
The map $\Phi$ is the identity on the space $T_{X}\otimes\CC$ containing
the period.
\end{proof}
\begin{lem}
\label{lem:The-Hodge-isometry is effective}The Hodge isometry $\Phi$
is effective.
\end{lem}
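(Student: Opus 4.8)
The statement to prove is that the Hodge isometry $\Phi$ of $H^2(X,\ZZ)$ constructed in Lemma \ref{lem:The-map-extends} is \emph{effective}, i.e. that it maps the positive cone to itself and preserves the set of effective classes (equivalently, after composing with Weyl group elements, it sends the ample cone to itself). Once this is known, the strong Torelli theorem for K3 surfaces yields an automorphism $\iota$ of $X$ with $\iota^* = \Phi$, which is exactly Theorem \ref{thm:There-exists-an-invol}. So the whole content of this lemma is to verify the sign/chamber conditions.

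Let me think about what "effective" requires. A Hodge isometry $\Phi$ of $H^2(X,\ZZ)$ is effective (in the sense needed for the Torelli theorem, as in Beauville's or Barth–Peters–Van de Ven's formulation) if it preserves the positive cone $\mathcal{C}_X \subset \NS(X)\otimes\RR$ and maps effective classes to effective classes. Since $\Phi$ acts as the identity on $T_X$ and all of $T_X$ lies in the positive cone structure only trivially, the real work is entirely on $\NS(X)\otimes\RR$, where $\Phi$ acts via $\Phi_1 = \theta_2\theta_1$. The first thing to check is that $\Phi_1$ preserves the positive cone: I would exhibit one class that $\Phi_1$ fixes or sends to an obviously positive class with positive pairing against an ample class — for instance, apply $\Phi_1$ to $L$ (or to $L + N(A_1+\cdots+A_{16})$ for large $N$) and compute that the image has positive intersection with $L$, using $L^2 > 0$, $LA_i = 0$, $L_j^2 = L^2$, and $L_j L = (2k+1)L^2 > 0$. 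This pins down the correct connected component.

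**The main step.** The crux is to show that after preserving the positive cone, $\Phi$ maps effective $(-2)$-classes to effective $(-2)$-classes, or equivalently that some Weyl-group translate of $\Phi$ fixes the ample cone. Here I would use the standard fact: a Hodge isometry preserving the positive cone is effective if and only if it maps the ample cone into the positive cone union — more practically — if and only if there is an ample class $H$ with $\Phi(H)$ again in the closure of the ample cone, i.e. $\Phi(H)\cdot\Gamma \geq 0$ for every $(-2)$-curve $\Gamma$. The difficulty is that there are infinitely many $(-2)$-curves on $X$ (indeed $\Phi_1$ has infinite order, so its orbit of $A_1$ produces infinitely many of them). The way around this is to recall that $X = \Km(B)$ carries the natural contraction morphism given by $|L'|$ (Proposition \ref{prop:Suppose-.-Thena)} and Theorem \ref{thm7}) and that $\theta_1$, respectively $\theta_2$, is "geometrically realized" up to the action of $\NS$: more precisely, I expect one argues that the class $L_i' $ is big and nef (this is exactly Proposition \ref{prop:Suppose-.-Thena)}(a) for each index), so that $\Phi(L)=\theta_2\theta_1(L)$ is a nef class, and then that the chambers match up. Concretely: since $L'$ and its $\theta$-translates are nef, and since the ample cone of $X$ is cut out by the $(-2)$-curves, one can show $\Phi$ and the identity land in nef chambers that are identified by a composition of reflections, hence $\Phi$ composed with that element of the Weyl group preserves the ample cone.

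**The expected obstacle.** The genuinely delicate point is controlling the infinitely many $(-2)$-curves. I anticipate the clean route is: use the nef and big class $L' = L_1'$ from Proposition \ref{prop:Suppose-.-Thena)}, note $\theta_1(L)=L'$ and $\theta_1(A_1)=A_1'$ with $\theta_1$ fixing $A_j$, $j\ge 2$; deduce that $\theta_1$ sends the nef class $L$ to the nef class $L'$ and permutes a set of $(-2)$-curves adapted to these polarizations; then the standard argument (e.g. as in the treatment of Jacobian Kummer automorphisms) shows $\theta_1$ maps the fundamental chamber of one marking to that of another, hence its composite $\Phi_1 = \theta_2\theta_1$ preserves a fundamental domain for the Weyl group action up to a reflection, and therefore some $w\circ\Phi$ with $w$ in the Weyl group fixes the ample cone. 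Since $w$ is realized by a composition of reflections in $(-2)$-curves (which act trivially on $T_X$, hence extend as Hodge isometries realized by $\pm$ identity-type maps, actually realized geometrically on $X$), effectiveness of $\Phi$ follows. The hard part will be making the "chambers match up" step rigorous in the presence of the infinite-order behavior — that is where I would spend the most care, likely by invoking that $L'$ is big and nef together with the explicit description of which $(-2)$-classes pair to zero with it (the second sentence of Proposition \ref{prop:Suppose-.-Thena)}(a)).
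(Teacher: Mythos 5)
You have correctly identified what must be shown --- by \cite[Proposition 3.11]{BPVdV} it suffices to exhibit a single ample class whose image under $\Phi$ is again ample --- but your proposal never actually produces such a class, and the route you sketch around the infinitely many $(-2)$-curves is both left unexecuted and flawed at its final step. The paper's argument is much more direct and avoids all chamber analysis: by \cite[Proposition 4.3]{GS} the class $D=mL-\frac{1}{2}\sum_{i\geq1}A_{i}$ ($m\geq2$) is ample, and one applies $\Phi=\theta_{2}\theta_{1}$ not to $D$ but to $\theta_{1}(D)$. Since $\theta_{1}$ is an involution, $\Phi(\theta_{1}(D))=\theta_{2}(D)$, so it is enough to know that \emph{both} $\theta_{1}(D)$ and $\theta_{2}(D)$ are ample. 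But $\theta_{i}(D)=mL_{i}-\frac{1}{2}\bigl(A_{i}'+\sum_{j\neq i}A_{j}\bigr)$ is exactly the standard ample class of \cite[Proposition 4.3]{GS} attached to the \emph{new} Kummer structure: by Theorem \ref{thm:The-class-is -2} the classes $A_{i}',\{A_{j}\}_{j\neq i}$ form a Nikulin configuration, $X=\Km(B')$ for the corresponding Abelian surface $B'$, and $L_{i}$ comes from the generator of $\NS(B')$, so the cited proposition applies verbatim. This is the key idea missing from your write-up.

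Two concrete problems with your sketch. First, the ``chambers match up'' step is precisely the content of the lemma, and you acknowledge you do not know how to carry it out; note also that knowing $\Phi(L)=\theta_{2}\theta_{1}(L)$ is big and nef would not suffice, since $L$ and its images lie on walls of the nef cone (they are orthogonal to sixteen $(-2)$-curves) and so do not determine the chamber --- one genuinely needs an interior (ample) class. Second, your closing claim --- that if $w\circ\Phi$ preserves the ample cone for some Weyl group element $w$, then $\Phi$ is effective because $w$ is ``realized geometrically'' --- is false: the reflection in a $(-2)$-curve $\Gamma$ sends the effective class $\Gamma$ to $-\Gamma$ and is never induced by an automorphism of $X$. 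If only $w\circ\Phi$ were effective for a nontrivial $w$, the Torelli theorem would produce an automorphism acting as $w\circ\Phi$ rather than as $\Phi$, and Theorem \ref{thm:There-exists-an-invol} (hence Theorem \ref{thm:Main-2}) would not follow.
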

\begin{proof}
Since $X$ is projective by \cite[Proposition 3.11]{BPVdV}, it is
enough to prove that the image by $\Phi$ of one ample class is an
ample class. Let $m\geq2$ be an integer. By \cite[Proposition 4.3]{GS},
the divisor $D=mL-\frac{1}{2}\sum_{i\geq1}A_{i}$ is ample. The image
by $\theta_{1}$ of $D$ is 
\[
\theta_{1}(D)=mL_{1}-\frac{1}{2}\left(A_{1}'+\sum_{i\geq2}A_{i}\right)
\]
where by Section \ref{sec1:Two-Nikulin-configurations} we have that
$A_{1}'$ is a $(-2)$-curve, which is disjoint from the $A_{j},\,j\geq2$,
and these $16$ $(-2)$-curves have intersection $0$ with $L_{1}=\theta_{1}(L)$.
There exists an Abelian surface $B'$ such that $X=\Km(B')$ and these
$16$ $(-2)$-curves are resolution of the $16$ singularities in
$B'/[-1]$. Moreover $L_{1}$ comes from a polarization $M'$ on $B'$,
which clearly generates $\NS(B')$. Thus again by \cite[Proposition 4.3]{GS},
$\theta_{1}(D)$ is ample. \\
The analogous proof with $(\theta_{2},\,A_{2})$ instead of $(\t_{1},\,A_{1})$
gives us that $\theta_{2}(D)$ is also ample. Since $\theta_{i},\,i=1,2$
are involutions and $\Phi=\theta_{2}\theta_{1}$, we conclude that
\[
\Phi(\theta_{1}(D))=\theta_{2}(D)
\]
is ample, and thus $\Phi$ is effective.
\end{proof}

We can now apply the Torelli Theorem for K3 surfaces (see \cite[Chap. VIII, Theorem 11.1]{BPVdV}):
since $\Phi$ is an effective Hodge isometry there exists an automorphism
$\iota:X\to X$ such that $\iota^{*}=\Phi$. This finishes the proof
of Theorem \ref{thm:There-exists-an-invol}. \qed
\begin{rem}
The Lefschetz formula for the fixed locus $X^{\iota}$ of $\iota$
on $X$ gives 
\[
\chi(X^{\iota})=\sum_{i=0}^{4}(-1)^{i}tr(\Phi|H^{i}(X,\RR))=1+(4k^{2}+18)+1=20+4k^{2},
\]
(here $\iota^{*}=\Phi$). If $k=1$ then $\chi(X^{\iota})=24$ and
we can easily see that $X^{\iota}$ contains two rational curves.
Indeed in this case as remarked before (Remark \ref{rem15:Because-of-the})
$\theta_{i}$, $i=1,2$ can be extended to a non-symplectic involution
(still denoted $\t_{i}$) of the whole lattice $H^{2}(X,\ZZ)$. The
fixed locus of each $\theta_{i}$, $i=1,2$ are the curves pull-back
on $X$ of the six lines in the branching locus of the double cover
of $\PP^{2}$ (the $\theta_{i}$, $i=1,2$ are the covering involutions).
These curves are different except for the pull-backs $\ell_{1}$ and
$\ell_{2}$ of two lines, which are the lines passing through the
point of the branching curve corresponding to $A_{2}$ if we consider
the double cover determined by the involution $\theta_{1}$, respectively
through the point corresponding to $A_{1}$ if we consider $\theta_{2}$.
So the infinite order automorphism $\iota$ corresponding to $\Phi=\theta_{2}\theta_{1}$
fixes the two rational curves $\ell_{1}$ and $\ell_{2}$ on $X$.
By using results of Nikulin on non-symplectic involutions \cite{NikiPezzo}
the invariant sublattices $H^{2}(X,\ZZ)$ for the action of $\theta_{i}$,
$i=1,2$ are both isometric to $U\oplus E_{8}(-1)\oplus\langle-2\rangle^{\oplus6}$. 
\end{rem}

\subsection{\label{subsec:Some-remarks-on Auto}Action of the automorphism group
on Nikulin configurations}

The aim of this sub-section is to prove the following result
\begin{thm}
\label{thm:no automorphisms}Suppose that $k\geq2$. There is no automorphism
$f$ of $X$ sending the configuration $\mathcal{C}=\sum_{i=1}^{16}A_{i}$
to the configuration $\mathcal{C}'=A_{1}'+\sum_{i=2}^{16}A_{i}$. 
\end{thm}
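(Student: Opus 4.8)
The plan is to argue by contradiction, exploiting the fact that an automorphism $f$ of $X$ acts isometrically on $\NS(X)$ and on the discriminant group, and that the polarizations $L$ and $L_1'$ are canonically attached (as orthogonal complements) to the two Nikulin configurations. First I would observe that if $f(\mathcal C)=\mathcal C'$, then since $L$ is the positive generator of $\mathcal C^\perp$ inside $\NS(X)$ (up to sign, and fixed by choosing the ample cone) and $L_1'$ is the positive generator of $(\mathcal C')^\perp$, necessarily $f^*(L_1')=\pm L$, hence $f^*(L_1')=L$ because $f$ preserves the ample cone; likewise $f^*$ permutes $\{A_1',A_2,\dots,A_{16}\}$ among the $\{A_1,\dots,A_{16}\}$. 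Thus $f^*$ sends the sublattice $\la L_1'\ra\oplus K'$ isometrically onto $\la L\ra\oplus K$, where $K'$ is the Kummer lattice spanned by $A_1',A_2,\dots,A_{16}$ and $K$ the one spanned by $A_1,\dots,A_{16}$; both saturate to the same $\NS(X)$, so $f^*$ is an isometry of $\NS(X)$ carrying one ``Kummer decomposition'' to the other. The key point is that such an $f^*$ would have to be realized, on $\NS(X)$, by an element of the form $g\circ\theta_1$ (composition of $\theta_1$ with a permutation-type isometry $g$ fixing $L$ and permuting the $A_i$), since $\theta_1$ is precisely the rational isometry implementing $(L,A_1)\mapsto(L_1',A_1')$.

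The heart of the argument is then to show this is incompatible with $f^*$ being an \emph{integral} isometry of $\NS(X)$ that extends to a Hodge isometry of $H^2(X,\ZZ)$. Here I would use the discriminant-form computation already carried out in the proof of Lemma \ref{lem:The-map-extends}: one has $\theta_j(w_5)=(1-2k^2)w_5$ on the discriminant group $\NS(X)^\vee/\NS(X)\cong(\ZZ/2\ZZ)^4\times\ZZ/4d\ZZ$, and $(1-2k^2)\not\equiv\pm1\pmod{4d}$ when $k\geq2$ (whereas for $k=1$, $1-2k^2=-1$). More precisely, an automorphism $f$ of $X$ acts on $T_X$ and hence on the discriminant group $A_{T_X}$ via an isometry whose restriction to the $\ZZ/4d\ZZ$-part is multiplication by $\pm1$ (because $T_X=U(2)^{\oplus2}\oplus\la-4d\ra$ has only $\pm1$ acting on the order-$4d$ summand of its discriminant — the $\la-4d\ra$ generator must go to $\pm$ itself up to the $2$-torsion, and any Hodge isometry fixes the period line hence acts on $T_X$ with finite order compatible with this). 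Glueing forces the action on the $\ZZ/4d\ZZ$-part of $\NS(X)^\vee/\NS(X)$ to be $\pm1$ as well; but $f^*=g\circ\theta_1$ on $\NS(X)$ with $g$ permuting the $A_i$ and fixing $L$ acts on $w_5$ by $(1-2k^2)$ times a sign, and for $k\geq2$ this is neither $+1$ nor $-1$ modulo $4d$. This contradiction rules out the existence of $f$.

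In carrying this out I would first pin down carefully that the ``permutation part'' $g$ acts trivially on the relevant discriminant class $w_5$ up to sign: the isometries of $\NS(X)$ that fix $L$ and permute the $A_i$ come from permutations of the $16$ nodes preserving the Kummer lattice, i.e. from the group $2^4\rtimes\psl_2(\FF_2)$ (affine group over $\FF_2$) acting on $A[2]$, and one checks these act on $w_1,\dots,w_4$ by permutation/addition within the $(\ZZ/2\ZZ)^4$-part and fix the $\ZZ/4d\ZZ$-part generated (modulo $2$-torsion) by $w_5$ up to a sign — this is where I would invoke Lemma \ref{lem:At most 4 beta} and the explicit generators $v_1,\dots,v_{17}$ to control how $w_5$ depends on $A_1$ versus the other $A_i$. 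The main obstacle I anticipate is exactly this bookkeeping: showing that \emph{no} relabeling of the sixteen curves can absorb the anomalous factor $1-2k^2$, equivalently that the class $A_1$ (the one being replaced) is distinguished in the discriminant group in a way that a permutation of nodes cannot undo. Once that is in hand, the numerical fact that $k^2+k+1$ is never a square (already used to prove Theorem \ref{thm:The-class-is -2}) and the congruence $(1-2k^2)^2\equiv1\pmod{4d}$ but $1-2k^2\not\equiv\pm1\pmod{4d}$ for $k\geq2$ close the argument.
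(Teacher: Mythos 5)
Your strategy --- write $f^{*}=g\circ\theta_{1}$ with $g$ an integral isometry of $\NS(X)$ fixing $L$ and permuting the $A_{i}$, and derive a contradiction from the induced action on the discriminant group --- is genuinely different from the paper's proof, which first shows (via a Pell--Fermat analysis) that $f$ must swap $A_{1}$ and $A_{1}'$, then that $f$ has finite $2$-power order because it preserves the ample class $D+f^{*}D$, and finally rules out the resulting involutions by showing they would have to be symplectic yet have trace $>6$ on $H^{2}(X,\ZZ)$. Your reduction $f^{*}=g\circ\theta_{1}$ is correct, and the fact you need --- that $f^{*}$ acts as $\pm\mathrm{id}$ on $\NS(X)^{\vee}/\NS(X)$ --- is true, but not for the reason you give: it does not follow from the shape of $T_{X}$ alone (any unit $u$ with $u^{2}\equiv1$ can act on the discriminant of $\la-4d\ra$). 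The correct input is that $\mathrm{O}_{\mathrm{Hdg}}(T_{X})=\{\pm\mathrm{id}\}$ because $\rk T_{X}=5$ is odd: the Hodge isometry group is cyclic of order $m$ with $\varphi(m)\mid 5$, hence $m\leq2$, and an involution of $T_{X}$ with no invariant vectors is $-\mathrm{id}$. Granting this, since $g$ fixes $L$ the homomorphism $v\mapsto\la v,L\ra$ shows $g(w_{5})\equiv w_{5}$ modulo the full $2$-torsion subgroup $H$ of the discriminant group, so $f^{*}(w_{5})\equiv(1-2k^{2})w_{5}\pmod{H}$, and comparing with $\pm\mathrm{id}$ gives $1-2k^{2}\equiv\pm1\pmod{2d}$ with $2d=k(k+1)$. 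This fails for all $k\geq3$, so in that range your argument closes and is in fact shorter than the paper's.

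The genuine gap is at $k=2$, which is part of the theorem's range. The subgroup $H$ contains $2d\,w_{5}$, so the congruence you control is only modulo $2d$, not modulo $4d$ as you assert, unless you can exclude $g(w_{5})=(1+2d)w_{5}+h'$ with $h'\in\la w_{1},\dots,w_{4}\ra$; the discriminant form does not exclude it, since $q(2dw_{5})+2b(w_{5},2dw_{5})\equiv d+1\equiv0\bmod 2$ for $k=2$. And modulo $2d=6$ one has $1-2k^{2}=-7\equiv-1$, while $(1-2k^{2})(1+2d)=-49\equiv-1\pmod{12}$, so $f^{*}=-\mathrm{id}$ on the discriminant group is perfectly consistent with $f^{*}=g\theta_{1}$: no contradiction arises. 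This is not an artifact of your bookkeeping --- the paper itself needs a lengthy separate analysis for $k=2$, using the $15$-nodal quartic model and the classification of non-symplectic involutions via $2$-elementary lattices. To complete your proof you would have to either rule out the existence of an integral isometry $g$ with $g(w_{5})=(1+2d)w_{5}+h'$ (which amounts to delicate bookkeeping with the generators $v_{1},\dots,v_{17}$ and is not obviously true), or supplement the discriminant argument at $k=2$ with something like the paper's symplectic/non-symplectic trace dichotomy.
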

Suppose that such an automorphism $f$ exists. The group of translations
by the $2$-torsion points on $B$ acts on $X=\Km(B)$ and that action
is transitive on the set of curves $A_{1},\dots,A_{16}$. Thus up
to changing $f$ by $f\circ t$ (where $t$ is such a translation),
one can suppose that the image of $A_{1}$ is $A_{1}'$. Then the
automorphism $f$ induces a permutation of the curves $A_{2},\dots,A_{16}$.
The $(-2)$-curve $A_{1}''=f^{2}(A_{1})=f(A_{1}')$ is orthogonal
to the $15$ curves $A_{i},\,i>1$ and therefore its class is in the
group generated by $L$ and $A_{1}$. By the description of $\NS(X)$,
the $(-2)$-class $A_{1}''=aA_{1}+bL$ has coefficients $a,b\in\ZZ$
. Moreover $a,b$ satisfy the Pell-Fermat equation
\begin{equation}
a^{2}-k(k+1)b^{2}=1.\label{eq:Pell-Fermat-1}
\end{equation}
Let us prove:
\begin{lem}
Let $C=aA_{1}+bL$ be an effective $(-2)$-class. Then there exists
$u,v\in\NN$ such that $aA_{1}+bL=uA_{1}+vA_{1}'$, in particular
the only $(-2)$-curves in the lattice generated by $L$ and $A_{1}$
are $A_{1}$ and $A_{1}'$.
\end{lem}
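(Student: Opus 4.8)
The plan is to work inside the rank-two sublattice $\Lambda:=\ZZ A_{1}\oplus\ZZ L$ of $\NS(X)$ (note that $a,b\in\ZZ$ automatically, by Lemma \ref{lem:At most 4 beta}, since the coefficients of $A_{2},\dots,A_{16}$ vanish), whose Gram matrix in the basis $(A_{1},L)$ is $\mathrm{diag}(-2,\,2k(k+1))$. Thus $C=aA_{1}+bL$ has $C^{2}=-2$ exactly when $a^{2}-k(k+1)b^{2}=1$, i.e.\ when $(a,b)$ solves the Pell--Fermat equation \eqref{eq:Pell-Fermat-1}. First I would reduce to the case $b\ge 0$: since $L$ is nef, an effective $C$ satisfies $0\le C\cdot L=2k(k+1)b$. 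If $b=0$ then $a^{2}=1$, and intersecting with the ample class $D=2L-\tfrac12\sum_{i}A_{i}$ (ample by \cite[Proposition 4.3]{GS}), for which $C\cdot D=a$, effectiveness of $C$ forces $a=1$, so $C=A_{1}=1\cdot A_{1}+0\cdot A_{1}'$.

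The main step is to show $b$ is even. I would verify that $(2k+1,2)$ is the fundamental solution of $x^{2}-k(k+1)y^{2}=1$: it is visibly a solution, and there is none with $y=1$, because $k^{2}+k+1$ is never a perfect square (established in the proof of Proposition \ref{prop:Suppose-.-Thena)}). Consequently every positive solution arises as $a_{n}+b_{n}\sqrt{k(k+1)}=(2k+1+2\sqrt{k(k+1)})^{n}$, so $b_{0}=0$ and $b_{n+1}=(2k+1)b_{n}+2a_{n}$; by induction all $b_{n}$ are even, and since $|b|=b_{n}$ for some $n$, $b$ is even. Writing then $b=2v$ with $v\in\NN$ and putting $u:=a+(2k+1)v\in\ZZ$, substituting $A_{1}'=2L-(2k+1)A_{1}$ gives $uA_{1}+vA_{1}'=aA_{1}+bL=C$.

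To finish the first assertion it remains to see $u\ge 0$, which is clear if $a\ge 0$ since $v\ge 0$. If $a<0$ then necessarily $b\ge 2$, and combining $a^{2}=k(k+1)b^{2}+1$ with $(2k+1)^{2}=4k(k+1)+1$ yields $\bigl(\tfrac{(2k+1)b}{2}\bigr)^{2}=k(k+1)b^{2}+\tfrac{b^{2}}{4}\ge a^{2}$, hence $\tfrac{(2k+1)b}{2}\ge -a$ and $u\ge 0$ (equality, i.e.\ $C=A_{1}'$, occurring for the fundamental solution $b=2$). For the "in particular", let $C$ be a $(-2)$-curve with class in $\Lambda$; being effective, $C=uA_{1}+vA_{1}'$ with $u,v\in\NN$ not both zero. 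If $u=0$ then $C=vA_{1}'$ and $C^{2}=-2$ forces $v=1$, $C=A_{1}'$; symmetrically $v=0$ gives $C=A_{1}$; and $u,v\ge 1$ is impossible, since then $C-A_{1}=(u-1)A_{1}+vA_{1}'$ would be a nonzero effective divisor, contradicting the irreducibility of the prime divisor $C$.

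I expect the only genuinely delicate point to be the parity statement $b\in 2\ZZ$: it hinges on recognizing $(2k+1,2)$ as the fundamental solution of the Pell--Fermat equation, which is precisely where the earlier fact that $k^{2}+k+1$ is never a square enters. Everything else is routine manipulation of intersection numbers and inequalities.
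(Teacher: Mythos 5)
Your proof is correct and follows the same basic strategy as the paper: work in the rank-two lattice $\ZZ A_{1}\oplus\ZZ L$, identify effective $(-2)$-classes with solutions of the Pell--Fermat equation $a^{2}-k(k+1)b^{2}=1$, recognize $(2k+1,2)$ as the fundamental solution, and convert back to the basis $(A_{1},A_{1}')$. Two points where you diverge are worth noting, both in your favour. First, you pin down the fundamental solution by observing that $y=1$ is impossible because $k^{2}+k+1$ is never a square (a fact already established in the proof of Proposition \ref{prop:Suppose-.-Thena)}), rather than by appealing to continued-fraction convergents as the paper does; this is cleaner and self-contained. Second, and more substantively, the paper reduces at once to $a<0$, $b>0$ by asserting that an effective $(-2)$-class other than $A_{1}$ satisfies $CA_{1}>0$; that assertion is valid when $C$ is irreducible (which is all that is needed for the application to $A_{1}''=f(A_{1}')$), but it fails for general effective classes --- for instance $(2k+1)A_{1}+2L=(4k+2)A_{1}+A_{1}'$ is an effective $(-2)$-class with positive $A_{1}$-coefficient. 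Your route, which first establishes that $b$ is even via the recursion $b_{n+1}=(2k+1)b_{n}+2a_{n}$ and then checks $u=a+(2k+1)b/2\geq 0$ through the inequality $\bigl((2k+1)b/2\bigr)^{2}\geq a^{2}$ for $b\geq2$, covers all effective classes and hence proves the lemma exactly as stated. Finally, your justification of the ``in particular'' clause is more explicit than the paper's (which leaves it implicit); it does rely on the standard fact that the class of a $(-2)$-curve on a K3 surface has a unique effective representative, so that $C-A_{1}$ effective and nonzero really does force $A_{1}$ to be a component of $C$ --- you may want to say this in one line, but the argument is correct.
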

\begin{proof}
If $(a,b)$ is a solution of equation \eqref{eq:Pell-Fermat-1}, then
so are $(\pm a,\pm b)$. We say that a solution is positive if $a\geq0$
and $b\geq0$. Let us identify $\ZZ^{2}$ with $A=\ZZ[\sqrt{N}]$
by sending $(a,b)$ to $a+b\sqrt{N}$, where $N=k(k+1)$. The solutions
of \eqref{eq:Pell-Fermat-1} are units of the ring $A$. According to
the Chakravala method solving equation \eqref{eq:Pell-Fermat-1},
there exists a solution $\a+\b\sqrt{N}$ (called fundamental) with
$\a,\b\in\NN^{*}$ such that the positive solutions are the elements
of the form
\[
a_{m}+b_{m}\sqrt{N}=(\a+\b\sqrt{N})^{m},\,m\in\NN.
\]
The first term of the sequence of convergents of the regular continued
fraction for $\sqrt{N}$ is 
\[
\frac{2k+1}{2},
\]
and since $(2k+1,2)$ is a solution of \eqref{eq:Pell-Fermat-1},
the fundamental solution is $(\a,\b)=(2k+1,2)$. \\
An effective $(-2)$-class $C=aA_{1}+bL$ either equals $A_{1}$ or
satisfies $CL>0$ and $CA_{1}>0$, therefore $b>0$ and $a<0$. Thus
if $C\neq A_{1}$, there exists $m$ such that $C=-a_{m}A_{1}+b_{m}L$.
Since $A_{1}'=2L-(2k+1)A_{1}$, one obtains
\[
-a_{m}A_{1}+b_{m}L=\frac{b_{m}}{2}A_{1}'+((2k+1)\frac{b_{m}}{2}-a_{m})A_{1}
\]
and the Lemma is proved if the coefficients $u_{m}=\frac{b_{m}}{2}$
and $v_{m}=(2k+1)\frac{b_{m}}{2}-a_{m}$ are both positive and in
$\ZZ$. Using the relation
\[
a_{m+1}+b_{m+1}\sqrt{N}=(2k+1+2\sqrt{N})(a_{m}+b_{m}\sqrt{N}),
\]
that follows from an easy induction.
\end{proof}
Therefore we conclude that $A_{1}''=A_{1}$ i.e. $f$ permutes $A_{1}$
and $A_{1}'$. Let us finish the proof of Theorem \ref{thm:no automorphisms}:
\begin{proof}
The class $f^{*}L$ is orthogonal to $A_{1}',A_{2},\dots,A_{16}$,
thus this is a multiple of the class $L'=(2k+1)L-2k(k+1)A_{1}$ which
has the same property. Since both classes have the same self-intersection
and are effective, one gets $f^{*}L=L'$; by the same reasoning, since
$f^{*}A_{1}'=A_{1}$, one gets $f^{*}L'=L$.  By \cite[Proposition 4.3]{GS},
the divisor 
\[
D=2L-\frac{1}{2}\sum_{i\geq1}A_{i}
\]
is ample, thus $f^{*}D=2L'-\frac{1}{2}(A_{1}'+\sum_{i\geq2}A_{i})$
is also ample and so is $D+f^{*}D$. Moreover $D+f^{*}D$ is preserved
by $f$, thus by \cite[Proposition 5.3.3]{Huyb}, the automorphism
$f$ has finite order. Up to taking a power of it, one can suppose
that $f$ has order $2^{m}$ for some $m\in\NN^{*}$. Suppose $m=1$,
ie $f$ is an involution. Then 
\[
\frac{1}{2}(A_{1}+A_{1}')=L-kA_{1}
\]
is fixed, there are curves $A_{i},\,i>1$ such that $f(A_{i})=A_{i}$
(say $s$ of such curves; necessarily $s$ is odd) and $f$ permutes
the remaining curves $A_{j}$ by pairs (there are $t=\frac{1}{2}(15-s)$
such pairs). Let $\G$ be the lattice generated by the classes $A_{i}$
fixed by $f$, by $A_{j}+f(A_{j})$ if $f(A_{j})\neq A_{j}$ and by
$L-kA_{1}$. It is a finite index sub-lattice of $\NS(X)^{f}$, the
fix sub-lattice of the Néron-Severi group. The discriminant group
of $\G$ is 
\[
\ZZ/2k\ZZ\times(\ZZ/2\ZZ)^{s}\times(\ZZ/4\ZZ)^{t}.
\]
Since in $\NS(X)$ there is at most a coefficient $\frac{1}{2}$ on
$L$, the discriminant of $\NS(X)^{f}$ contains $\ZZ/k\ZZ$. If $f$
was non-symplectic, then $\mathcal M=\NS(X)^{f}$ would be a $2$-elementary
lattice (see \cite{AST}; it means that the discriminant group $\mathcal{M}^{*}/\mathcal{M}\simeq(\ZZ/2\ZZ)^{a}$
for some integer positive $a$). But for $k>2$ this is impossible,
therefore $f$ has to be symplectic. \\
For $k=2$, we use the model $Y\hookrightarrow\PP^{3}$ of degree
$4$ with $15$ nodes of $X$ determined by the divisor $L-2A_{1}$.
Since $f$ preserves $L-kA_{1}$, the involution on $X$ induces an
involution (still denoted $f$) on $\PP^{3}=|L-kA_{1}|$ preserving
$Y$. Up to conjugation, $f$ is $x\to(-x_{1}:x_{2}:x_{3}:x_{4})$
or $x\to(-x_{1}:-x_{2}:x_{3}:x_{4})$. \\
Suppose that $f$ is $f:x\to(-x_{1}:x_{2}:x_{3}:x_{4})$. The hyperplane
$x_{1}=0$ cuts the quartic $Y$ into a quartic plane curve $C_{0}\hookrightarrow Y$.
The surface $Y$ is a double cover of $\PP(2,1,1,1)$ branched over
$C_{0}\hookrightarrow\PP(2,1,1,1)$. The quartic $C_{0}$ is irreducible
and reduced, since otherwise $X$ would have Picard number $>17$.
The singularities on $C_{0}$ are at most nodes and the corresponding
nodes on $Y$ are fixed by $f$. Let us recall that the number $s$
of fixed nodes is odd. \\
Suppose that $C_{0}$ contains $3$ nodes. Its pull back $C_{0}'$
on $X$ is a smooth rational curve. The rank of the sub-lattice $\NS(X)^{f}$
is $1+s+t=10$. By \cite[Figure 1]{AST}, the genus of the fixed curve
$C_{0}'$ must be strictly positive, which is a contradiction. \\
Suppose that $C_{0}$ contains $2$ nodes, then the isolated fixed
point $(1:0:0:0)$ is also a node; the rank of $\NS(X)^{f}$ is still
$10$. One has 
\[
[\NS(X)^{f}:\Gamma]^{2}=\frac{\det\G}{\det\NS(X)^{f}}=\frac{2^{2+1+2t}}{2^{a}}=2^{17-a},
\]
thus $a$ is odd. However by \cite[Figure 1]{AST}, when $\NS(X)^{f}$
has rank $10$, the integer $a$ is always even, this is a contradiction.\\
Suppose that $C_{0}$ contains $1$ node. Its pull back on $X$ is
a smooth genus $2$ curve. One has $\rk\NS(X)^{f}=9$. By \cite[Figure 1]{AST},
since the fixed curve has genus $2$, one has $a=9$, therefore 
\[
[\NS(X)^{f}:\Gamma]^{2}=2^{17-a}=2^{8},
\]
and there are at most $4$-classes which are $2$-divisible in the
discriminant group 
\[
\ZZ/4\ZZ\times\ZZ/2\ZZ\times(\ZZ/4\ZZ)^{7}
\]
of $\G$. But then the discriminant group of $\NS(X)^{f}$ would contain
a sub-group $\ZZ/4\ZZ$, which is a contradiction. \\
Suppose that $f$ is $f:x\to(-x_{1}:-x_{2}:x_{3}:x_{4})$ (observe
that we can not exclude immediately this case since $Y$ is singular.
If $Y$ would be smooth then such an $f$ would correspond to a symplectic
automorphism). The line $x_{1}=x_{2}=0$ or $x_{3}=x_{4}=0$ cannot
be included in $Y,$ otherwise $Y$ would be singular along that line
(this is seen using the equation of $Y$). The number of fixed nodes
being odd, there are $1$ or $3$ fixed nodes of $Y$ on these two
lines (the intersection number of each lines with $Y$ being $4$).
\\
Suppose that one node is fixed. The corresponding $(-2)$-curve on
$X$ must be stable, moreover $\rk\NS(X)^{f}=9$. But by \cite[Figure 1]{AST},
there is no non-symplectic involution on a K3 such that $\rk\NS(X)^{f}=9$
and the fix-locus is a $(-2)$-curve or is empty. By the same reasoning,
one can discard the case of $3$ stable rational curves. \\
We therefore proved that for any $k>1$, $f$ must be symplectic.

A symplectic automorphism acts trivially on the transcendental lattice
$T_{X}$, which in our situation has rank $5$. Therefore the trace
of $f$ on $H^{2}(X,\ZZ)$ equals $6+s>6$. But the trace of a symplectic
involution equals $6$ (see e.g. \cite[Section 1.2]{SvG}). This is
a contradiction, thus $f$ cannot have order $2$ and $m$ is larger
than $1$.

The automorphism $g=f^{2^{m-1}}$ has order $2$ and $g(A_{1})=A_{1},\,g(A_{1}')=A_{1}'$,
thus $g(L)=L$. There are curves $A_{i},\,i>1$ such that $f(A_{i})=A_{i}$
(say $s$ of such, $s$ is odd since $A_{1}$ is fixed) and the remaining
curves $A_{j}$ are permuted $2$ by $2$ (there are $t=\frac{1}{2}(15-s)$
such pairs). Let similarly as above $\G'$ be the sub-lattice generated
by $L,A_{1}$ and the fix classes $A_{i}$, $A_{j}+g(A_{j})$. It
is a finite index sub-lattice of $\NS(X)^{g}$ and its discriminant
group is 
\[
\ZZ/2k(k+1)\ZZ\times(\ZZ/2\ZZ)^{s+1}\times(\ZZ/4\ZZ)^{t}.
\]
By the same reasoning as before, the automorphism $g$ must be symplectic
as soon as $k>1$. But the trace of $g$ is $8+s>6$, thus $g$ cannot
be symplectic either. Therefore we conclude that such an automorphism
$f$ does not exist. 
\end{proof}

\subsection{Consequences on the Kummer structures on $X$\label{subsec:Nikulin-structures-and Fermat}}

A Kummer structure on a K3 surface $X$ is an isomorphism class of
Abelian surfaces $B$ such that $X\simeq\Km(B)$.  The following
Proposition is stated in \cite{HLOY}; we give here a proof for completeness:
\begin{prop}
\label{prop:The-Kummer-structures}The Kummer structures on $X$ are
in one-to-one correspondence with the orbits of Nikulin configurations
under the automorphism group $\aut(X)$ of $X$.
\end{prop}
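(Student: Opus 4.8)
The plan is to set up the bijection in both directions and check they are mutually inverse. In one direction, if $B$ is an Abelian surface with $X\simeq\Km(B)$, fix such an isomorphism $\psi\colon\Km(B)\xrightarrow{\sim}X$; the sixteen exceptional $(-2)$-curves of $\Km(B)\to B/[-1]$ map under $\psi$ to a Nikulin configuration $\mathcal C_B$ on $X$. I would first check this is well defined up to the action of $\aut(X)$: a different choice of isomorphism $\psi'$ differs from $\psi$ by an automorphism of $X$ on the target (and by an automorphism of $\Km(B)$ on the source, which however only permutes the sixteen curves among themselves, hence does not change the configuration as an unordered set). So the orbit of $\mathcal C_B$ under $\aut(X)$ depends only on $B$. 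Moreover if $B\simeq B_1$ then clearly $\mathcal C_B$ and $\mathcal C_{B_1}$ lie in the same orbit, so we get a well-defined map from Kummer structures to orbits of Nikulin configurations.

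In the other direction, given a Nikulin configuration $\mathcal C=A_1+\dots+A_{16}$ on $X$, Nikulin's theorem (recalled at the start of the paper) produces a double cover $\widetilde B\to X$ branched over $\sum A_i$ whose minimal model $B=B_{\mathcal C}$ is an Abelian surface with $\Km(B)\simeq X$; I would record that the isomorphism class of $B_{\mathcal C}$ depends only on $\mathcal C$ (it is built functorially from the branch data), and that replacing $\mathcal C$ by $f(\mathcal C)$ for $f\in\aut(X)$ yields an isomorphic Abelian surface, since $f$ carries the double-cover data for $\mathcal C$ to that for $f(\mathcal C)$. This gives a well-defined map from orbits of Nikulin configurations to Kummer structures.

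Finally I would verify the two maps are inverse to each other. Starting from $B$, forming $\mathcal C_B$, and then running Nikulin's construction returns the double cover of $X$ branched along the image of the exceptional locus, whose minimal model is canonically $B$ again — this is essentially the content of Nikulin's theorem, that the Abelian surface reconstructed from the natural configuration on $\Km(B)$ is $B$ itself. Conversely, starting from $\mathcal C$, the natural configuration on $\Km(B_{\mathcal C})$ pulls back, via the isomorphism $\Km(B_{\mathcal C})\simeq X$ furnished by Nikulin's construction, to a configuration in the $\aut(X)$-orbit of $\mathcal C$ (again because the sixteen exceptional curves of $\widetilde B\to B_{\mathcal C}$ are by construction the curves lying over $A_1,\dots,A_{16}$). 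The main subtlety — the step I expect to need the most care — is the well-definedness up to $\aut(X)$ at the level of unordered configurations: one must argue that any ambiguity in the chosen isomorphisms is absorbed either into $\aut(X)$ or into the subgroup of $\aut(\Km(B))$ fixing the configuration setwise, and in particular that $\mathrm{Aut}(B)$ acting on $\Km(B)$ only permutes the sixteen curves, so no new orbit is produced. Everything else is bookkeeping with Nikulin's theorem.
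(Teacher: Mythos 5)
Your proposal is correct and follows essentially the same route as the paper: both directions rest on Nikulin's theorem that the double cover branched over a given configuration is unique up to isomorphism, transported by automorphisms of $X$ in one direction and by isomorphisms of Abelian surfaces (inducing isomorphisms of the blown-up surfaces and hence of the Kummer surfaces) in the other. The paper's proof is a more condensed version of the same bookkeeping.
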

\begin{proof}
Let $\mathcal{C}$ be a Nikulin configuration on the K3 surface $X$. By \cite[Theorem 1]{Nikulin} of Nikulin, there exists a unique (up to isomorphism) double cover $\tilde B \to X$ branched over $\mathcal{C}$. Moreover the minimal model $B$ of $\tilde B$ is an Abelian surface, and $X$ is the Kummer surface associated to $B$, $\mathcal C$ being the union of the exceptional curves of the resolution $X=\Km(B)\to B/[-1]$.

Let $\mu:X\to X$ be an automorphism sending a Nikulin configuration
$\mathcal{C}$ to $\mathcal{C}'$. 
Let $B$, $B'$ be the abelian
surfaces such that $\mathcal{C}$ (resp. $\mathcal{C}'$) is the configuration
associated to $\Km(B)=X$ (resp. $\Km(B')=X$). \\
Let $\tilde{B}\to B$ and $\tilde{B}'\to B'$ be the blow-up at the sixteen $2$-torsion
points of $B$ (resp. $B'$). Consider the natural map $\tilde{B}\to X\stackrel{\mu}{\to}X$:
it is a double cover of $X$ branched over $\mathcal{C}'$ and ramified
over the exceptional locus of $\tilde{B}\to B$, thus by the results of Nikulin we just recalled,
 $\tilde{B}$ is isomorphic to $\tilde{B}'$ and $B\simeq B'$.\\
Reciprocally, suppose that there is an isomorphism $\phi:B\to B'$.
It induces an isomorphism $\tilde{\phi}:\tilde{B}\to\tilde{B}'$ that
induces an isomorphism $X=\Km(B)\to\Km(B')=X$ which sends the Nikulin
configuration $\mathcal{C}$ corresponding to $B$ to the Kummer structure
$\mathcal{C}'$ corresponding to $B'$. 
\end{proof}
According to \cite{HLOY}, the number of Kummer structures is finite.
If $X=\Km(B)$ and $B^{*}$ is the dual of $B$, by result of Gritsenko
and Hulek \cite{GH} one has also $X\simeq\Km(B^{*})$, thus if $B$
is not principally polarized, the number of Kummer structures is
at least $2$. \\
When $\NS(B)=\ZZ M$, by results of Orlov \cite{Orlov} on derived
categories, the number of Kummer structures equals $2^{s}$ where
$s$ is the number of prime divisor of $\frac{1}{2}M^{2}$. In our
situation one has $M^{2}=k(k+1)$. By subsection \ref{subsec:Some-remarks-on Auto}
as soon as $k>2$, there is no automorphism sending the configuration
$\mathcal{C}=\sum_{i=1}^{16}A_{i}$ to $\mathcal{C}'=A_{1}'+\sum_{i=2}^{16}A_{i}$,
thus
\begin{cor}
Suppose $k\geq2$. The two Nikulin configurations $\mathcal{C}=\sum_{i=1}^{16}A_{i}$
and $\mathcal{C}'=A_{1}'+\sum_{i=2}^{16}A_{i}$ represent two distinct
Kummer structures on $X$.
\end{cor}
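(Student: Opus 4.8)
The plan is to deduce the statement directly from the two ingredients already in place: the dictionary between Kummer structures and orbits of Nikulin configurations (Proposition \ref{prop:The-Kummer-structures}), and the non-existence, for $k\geq 2$, of an automorphism of $X$ carrying $\mathcal{C}$ to $\mathcal{C}'$ (Theorem \ref{thm:no automorphisms}).

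First I would fix the Abelian surfaces $B$ and $B'$ attached to $\mathcal{C}$ and $\mathcal{C}'$, so that $X=\Km(B)=\Km(B')$ and $\mathcal{C}$ (resp.\ $\mathcal{C}'$) is the union of the sixteen exceptional curves of $X\to B/[-1]$ (resp.\ $X\to B'/[-1]$). By the construction of Section \ref{sec1:Two-Nikulin-configurations}, $\mathcal{C}$ is a Nikulin configuration, and by Theorem \ref{thm:The-class-is -2} so is $\mathcal{C}'$; hence each of them genuinely determines a Kummer structure on $X$, and it makes sense to ask whether these two structures coincide.

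Next I would argue by contradiction. Suppose $\mathcal{C}$ and $\mathcal{C}'$ represent the same Kummer structure, i.e.\ $B\simeq B'$ as Abelian surfaces. By the ``reciprocally'' direction of Proposition \ref{prop:The-Kummer-structures}, an isomorphism $B\to B'$ lifts to an isomorphism $\tilde B\to\tilde B'$ of the blow-ups at the $2$-torsion points and thus induces an automorphism $f$ of $X$ sending $\mathcal{C}$ onto $\mathcal{C}'$; equivalently, $B\simeq B'$ forces $\mathcal{C}$ and $\mathcal{C}'$ to lie in the same $\aut(X)$-orbit. But this is exactly what Theorem \ref{thm:no automorphisms} excludes for $k\geq 2$: there is no automorphism of $X$ carrying $\mathcal{C}=\sum_{i=1}^{16}A_{i}$ to $\mathcal{C}'=A_{1}'+\sum_{i=2}^{16}A_{i}$. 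This contradiction gives $B\not\simeq B'$, so the two configurations yield two distinct Kummer structures on $X$.

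At this point there is essentially no obstacle remaining: all the real work is already contained in Theorem \ref{thm:no automorphisms}, whose proof is the lattice-theoretic computation and case analysis of Subsection \ref{subsec:Some-remarks-on Auto}; the corollary is merely a translation of that non-existence statement through Proposition \ref{prop:The-Kummer-structures}. The single point I would take care to verify is that the bijection of Proposition \ref{prop:The-Kummer-structures} is set up so that ``$B$ and $B'$ give the same Kummer structure'' really does mean ``the two configurations lie in a common $\aut(X)$-orbit'' --- which is precisely the content of the two implications proved there. One may optionally add the remark that, by Orlov's theorem, the total number of Kummer structures on $X$ is $2^{s}$ with $s$ the number of prime divisors of $\frac{1}{2}M^{2}=\frac{1}{2}k(k+1)$, so for $k\geq 2$ this number is at least $2$, in agreement with the two structures exhibited here.
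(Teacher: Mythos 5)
Your proposal is correct and follows exactly the paper's own route: the corollary is obtained by combining Proposition \ref{prop:The-Kummer-structures} (Kummer structures correspond to $\aut(X)$-orbits of Nikulin configurations) with Theorem \ref{thm:no automorphisms} (no automorphism carries $\mathcal{C}$ to $\mathcal{C}'$ for $k\geq2$). Your extra care about which direction of the bijection is being used is fine but not a new idea; the argument matches the paper.
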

\begin{rem}
When $k=2$ then $\frac{k(k+1)}{2}=3$ is divisible by one prime,
thus the configurations $\mathcal{C}$ and $\mathcal{C}'$ are the
two representatives of the set of Kummer structures on $X=\Km(B)$.
Observe that $X$ is also isomorphic to $\Km(B^{*})$, where $B^{*}$
is the dual of $B$. Since $B$ is not isomorphic to $B^{*}$, the
double cover of $X$ branched over $\mathcal{C}'$ is (the blow-up
of) $B^{*}$.
\end{rem}

\section{bi-double covers associated to Nikulin configurations\label{sec:bi-double-covers-associated}}

\subsection{A hyperelliptic curve with genus $\leq2k$ and a point of multiplicity
$2(2k+1)$ on the Abelian surface $B$\label{subsec:An-hyperelliptic-curve}}

We keep the notations as above: $(B,M)$ is a polarized Abelian variety
with $M^{2}=k(k+1)$ and $\Pic(B)=\ZZ M$. The associated K3 surface
$X=\Km(B)$ contains the $17$ smooth rational curves 
\[
A_{1},A_{1}',\,A_{2},\dots,A_{16}
\]
such that $A_{1},\dots,A_{16}$ are the $16$ disjoint $(-2)$-curves
arising from the Kummer structure, $A_{1}'$ is a $(-2)$-curve such
that $A_{1}',\,A_{2},\dots,A_{16}$ is a Nikulin configuration and
\[
A_{1}A_{1}'=4k+2.
\]
Let $\pi:\tilde{B}\to B$ be the blow-up of $B$ at the $16$ points
of $2$-torsion, so that there is a natural double cover $\tilde{B}\to X=\Km(B)$
branched over the $16$ exceptional divisors. 

Let $\tilde{\G}$ be the pull-back of $A_{1}'$ on $\tilde{B}$ and
let $\G$ be the image of $\tilde{\G}$ on $B$. We denote by $E\hookrightarrow\tilde{B}$
the $(-1)$-curve above $A_{1}$. Let us prove the following result
\begin{prop}
\label{prop:The-curve-is hyperelliptic}The curve $\G\hookrightarrow B$
is hyperelliptic, it has geometric genus $\leq2k$ and has a unique
singularity, which is a point of multiplicity $2(2k+1)$. The curve
$\Gamma$ is in the linear system $|4M|$, in particular $\G^{2}=16k(k+1)$.
\end{prop}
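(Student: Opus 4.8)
The plan is to transport the whole configuration to the blow-up $\tilde{B}$ and then descend along $\pi\colon\tilde{B}\to B$. First I would pin down the class of $\tilde{\Gamma}$. Write $f\colon\tilde{B}\to X$ for the natural double cover, ramified over the sixteen $(-1)$-curves $E=E_{1},\dots,E_{16}$ of $\pi$ with $f(E_{i})=A_{i}$, so that $f^{*}A_{i}=2E_{i}$. Since $\NS(\tilde{B})=\ZZ\pi^{*}M\oplus\bigoplus_{i}\ZZ E_{i}$, while $f^{*}L$ is orthogonal to every $E_{i}$ and $(f^{*}L)^{2}=2L^{2}=4k(k+1)=(2\pi^{*}M)^{2}$, positivity forces $f^{*}L=2\pi^{*}M$. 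Hence
\[
\tilde{\Gamma}=f^{*}A_{1}'=f^{*}\bigl(2L-(2k+1)A_{1}\bigr)=4\pi^{*}M-2(2k+1)E_{1},
\]
a genuine linear equivalence since $\NS(X)=\Pic(X)$ on a K3 surface. From this I read off $\tilde{\Gamma}^{2}=16M^{2}-4(2k+1)^{2}=-4$ and, using $K_{\tilde{B}}=\sum_{i}E_{i}$, $\tilde{\Gamma}\cdot K_{\tilde{B}}=2(2k+1)$, so the adjunction formula gives $p_{a}(\tilde{\Gamma})=2k$; one also gets $\tilde{\Gamma}\cdot E_{i}=0$ for $i\geq2$ and $\tilde{\Gamma}\cdot E_{1}=2(2k+1)$. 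Applying $\pi_{*}$ (which kills the $E_{i}$ and satisfies $\pi_{*}\pi^{*}M\sim M$) then shows $\Gamma\in|4M|$ and therefore $\Gamma^{2}=16k(k+1)$.

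Next I would show that $\tilde{\Gamma}$ is exactly the strict transform of $\Gamma$ under $\pi$, which is what legitimizes the multiplicity statement and the ``unique singularity'' claim. The divisor $\tilde{\Gamma}=f^{*}A_{1}'$ is reduced (as $A_{1}'\neq A_{1}$ is irreducible and not a branch component) and irreducible (a double cover of $A_{1}'\cong\PP^{1}$ branched over the non-empty set $A_{1}\cap A_{1}'$), and no $E_{i}$ is a component of it: for $i\geq2$ because $\tilde{\Gamma}\cdot E_{i}=0$, and for $i=1$ because $f(E_{1})=A_{1}\neq A_{1}'$. Hence $\tilde{\Gamma}$ is the strict transform of $\Gamma$, so $\mathrm{mult}_{t_{1}}\Gamma=\tilde{\Gamma}\cdot E_{1}=2(2k+1)$, where $t_{1}=\pi(E_{1})$ is the $2$-torsion point below $E_{1}$, and $\Gamma$ passes through none of the other fifteen $2$-torsion points. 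For smoothness elsewhere: since $A_{1}'$ meets the branch divisor $\sum A_{i}$ only along $A_{1}$ and $f^{-1}(A_{1})=E_{1}$ set-theoretically, the open set $\tilde{\Gamma}\setminus E_{1}$ maps by $f$, \'etale, onto the smooth curve $A_{1}'\setminus A_{1}$, hence is smooth, and $\pi$ is an isomorphism there. So $\Gamma$ is smooth away from $t_{1}$, and $t_{1}$ is its unique singular point, of multiplicity $2(2k+1)$.

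Finally, $\pi$ restricts to a birational morphism $\tilde{\Gamma}\to\Gamma$ (an isomorphism outside $t_{1}$), so the two curves have a common normalization and $p_{g}(\Gamma)=p_{g}(\tilde{\Gamma})\leq p_{a}(\tilde{\Gamma})=2k$; composing that normalization with the degree-$2$ map $\tilde{\Gamma}\to A_{1}'\cong\PP^{1}$ exhibits a $g^{1}_{2}$, so $\Gamma$ is hyperelliptic. The step I expect to require the most care is the bookkeeping of the second paragraph --- verifying that $\tilde{\Gamma}$ absorbs no exceptional curve and genuinely is the strict transform of $\Gamma$. I would also stress that this route gives only $p_{g}(\Gamma)\leq2k$; equality (equivalently, an ordinary singular point on $\Gamma$) amounts to transversality of $A_{1}\cap A_{1}'$, precisely the delicate question taken up separately in this subsection, which I would not try to decide here.
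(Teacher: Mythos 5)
Your proof is correct, and it reaches the genus bound by a genuinely different route than the paper. You compute the class $\tilde{\G}=f^{*}A_{1}'=4\pi^{*}M-2(2k+1)E_{1}$ (after pinning down $f^{*}L=2\pi^{*}M$), get $p_{a}(\tilde{\G})=2k$ by adjunction on $\tilde{B}$, and conclude $p_{g}(\G)=p_{g}(\tilde{\G})\leq p_{a}(\tilde{\G})=2k$; the multiplicity statement then falls out of $\tilde{\G}\cdot E_{1}=2(2k+1)$ once you have checked that $\tilde{\G}$ is the strict transform of $\G$. The paper instead works locally: it first observes that the singularities of $A_{1}+A_{1}'$ are of type $\qa_{2m-1}$, records how each such point behaves under the double cover (becoming an $\qa_{m-1}$ point of $\tilde{\G}$ and contributing to the ramification of $\tilde{\G}\to A_{1}'$ only when $m$ is odd), and then applies Riemann--Hurwitz, $2g(\G)-2=-4+\sum_{m\,\mathrm{odd}}\a_{m}\leq-4+(4k+2)$. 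Your global argument is shorter and sidesteps the classification of the singularities of a union of two smooth curves entirely; the paper's local bookkeeping buys the finer relation between the genus defect of $\G$ and the tangency types of $A_{1}\cap A_{1}'$, which is what feeds the later discussion of transversality and of the possible singularity configurations $\sum m\a_{m}=4k+2$ in Section \ref{sec:bi-double-covers-associated}. Two small points worth tightening: your parenthetical irreducibility claim for $\tilde{\G}$ (``branched over a non-empty set'') is a bit quick, since a branch divisor all of whose points have even multiplicity could a priori admit a square root and split the cover; the cleanest fix is to note that a component of $f^{*}A_{1}'$ mapping isomorphically to $A_{1}'\simeq\PP^{1}$ would be a rational curve on $\tilde{B}$ not among the $E_{i}$, which is impossible on a blown-up abelian surface. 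Also, your computation is in $\NS$, so strictly it gives $\G\equiv 4M$ rather than $\G\in|4M|$; the paper is equally loose on this point, and it is immaterial for $\G^{2}=16k(k+1)$.
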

\begin{proof}
The singularities
on a curve that is the union of two smooth curves on a smooth surface are of type  
\[
\qa_{2m-1},\,m\geq1,
\]
where an equation of an $\qa_{2m-1}$ singularity is $\{x^{2m}-y^{2}=0\}$.
This is well-known by experts but we couldn't find a reference and we therefore sketch a proof. At a singularity $p$, there are local parameters
 $x,y$ such that $C_1$ is given by $y=0$. By the implicit function theorem, 
we reduce to the case where the curve $C_2$ has equation $y=x^m$ for some $m>0$. 
 Then the singularity has equation $\{y(y-x^m)=0\}$, which after a variable change becomes $\{x^{2m}-y^{2}=0\}$.\\
Let us denote by $\a_{m}$ the number of $\qa_{2m-1}$ singularities
on the union $A_{1}+A_{1}'$. Since a $\qa_{2m-1}$ singularity contributes
to $m$ in the intersection of $A_{1}$ and $A_{1}'$, one has 
\[
\sum_{m\geq0}m\a_{m}=4k+2.
\]
By \cite[Table 1, Page 109]{BPVdV}, the curve $\tilde{\G}\hookrightarrow\tilde{B}$
has a singularity $\qa_{m-1}$ above a singularity $\qa_{2m-1}$ of
$A_{1}+A_{1}'$ (by abuse of language a $\qa_{0}$-singularity means
a smooth point). Let $\Gamma'$ be de normalization of $\tilde{\G}$;
a $\qa_{2m-1}$-singularity contributes in the ramification locus
of the double cover $\G'\to A_{1}$ (induced by $\tilde{\G}\to A_{1}$)
by $1$ if $m$ is odd and $0$ if $m$ is even. Therefore the geometric
genus of $\G$ is 
\[
2g(\G)-2=2\cdot(-2)+\sum_{m\,odd}\a_{m}\leq4k+2,
\]
which gives $g(\G)\leq2k$. The singularities of $\tilde{\G}$ are
at its intersection with $E$, and since 
\[
\tilde{\G}E=\frac{1}{2}\pi_{1}^{*}A_{1}\pi_{1}^{*}A_{1}'=A_{1}A_{1}',
\]
we obtain $\tilde{\G}E=4k+2$. Since $E$ is contracted by the map
$\tilde{B}\to B$, the curve $\G$ (image of $\tilde{\G}$) has a
unique singular point of multiplicity $4k+2$. \\
Since $A_{1}'=2L-(2k+1)A_{1}$, its pull back on $\tilde{B}_{1}$
is $4\tilde{M}-2(2k+1)\tilde{\G}$ and its image $\Gamma$ has class
$4M$, thus $\G^{2}=16k(k+1)$. 
\end{proof}
\begin{rem}
Let us choose the point of multiplicity $2(2k+1)$ of $\G$ as the
origin $0$ of the group $B$. By construction the curve $\G$ does
not contain any non-trivial $2$-torsion point of $B_{1}$.
\end{rem}

\subsubsection*{The problem of the intersection of $A_{1}$ and $A_{1}'$}

It is a difficult question to understand how the curves $A_{1}$ and
$A_{1}'$ intersect on the Kummer surface $X=\Km(B)$. For $k=1$
and $2$ we know that these curves intersects transversally in $4k+2$
points, and thus $g(\Gamma)=2k$. For $k=1$, it follows from the
geometric description of the Jacobian Kummer surface as a double cover
of the plane branched over $6$ line. For $k=2$ it is a by-product
of \cite{RRS}. 

In \cite[Section 5, pp. 54--56]{BOPY} Bryan, Oberdieck, Pandharipande
and Yin, quoting results of Graber, discuss on a related problem which
is about hyperelliptic curves on Abelian surfaces. Let $f:C\to B$
be a degree $1$ morphism from a hyperelliptic curve $C$ to an Abelian
surface $B$ with image $\bar{C}$, such that the polarization $[\bar{C}]$
is generic. Let $\iota:C\to C$ be the hyperelliptic involution. 
\begin{conjecture}
\label{conj:(see-)}(see \cite{BOPY}) Suppose $B$ generic among
polarized Abelian surfaces. The differential of $f$ is injective
at the Weierstrass points of $C$, and no non-Weierstrass points $p$
is such that $f(p)=f(\iota(p))$. 
\end{conjecture}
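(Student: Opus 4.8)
The plan, in the spirit of the circle of ideas of Graber quoted in \cite{BOPY}, is to reformulate the conjecture as a statement about a rational curve on $\Km(B)$ and then to carry out a dimension count in the space of hyperelliptic curves on abelian surfaces.

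\emph{Step 1: anti-equivariance and reduction.} Since $f$ has degree $1$ and $[\bar C]$ is a polarization, $\bar C$ generates $B$, so $f$ factors through the Albanese as $C\to J(C)\xrightarrow{\psi}B$ with $\psi$ a surjective homomorphism followed by a translation. Taking a Weierstrass base point, the hyperelliptic involution acts on $J(C)$ as $[-1]$, hence $f\circ\iota=[-1]\circ f+c$ for some $c\in B$. Consequently every Weierstrass point $w$ satisfies $2f(w)=c$, and a non-Weierstrass point $p$ has $f(p)=f(\iota(p))$ exactly when $2f(p)=c$; both assertions of the conjecture therefore concern the fibre $f^{-1}(S_c)$ over the $B[2]$-torsor $S_c=\{b\in B\mid 2b=c\}$, which always contains the $2g+2$ Weierstrass points. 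Setting $\phi=[2]\circ f-c\colon C\to B$ one has $\phi\circ\iota=[-1]\circ\phi$, so $\phi$ descends to $\PP^{1}=C/\iota\to B/[-1]$ and, lifting through the resolution, to a rational curve $\bar\phi\colon\PP^{1}\to\Km(B)$ whose preimage of the $(-2)$-curve $E$ over the node $[0]$ contains the $2g+2$ branch points of $C\to\PP^{1}$. A local computation at a branch point then shows that the conjecture is equivalent to the assertion that $\bar\phi$ is an immersion at each point of $\bar\phi^{-1}(E)$ and that $\bar\phi^{-1}(E)$ contains no further point.

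\emph{Step 2: the dimension count.} Fix the class $\beta=[\bar C]$ and let $\mathcal H_{g,\beta}$ be the moduli space of triples $(B,C,f)$ with $C$ hyperelliptic of genus $g$ and $[\bar C]=\beta$; by Step 1 such a triple amounts to the rational curve $\bar\phi$ together with the marking of $\bar\phi^{-1}(E)$ by $B[2]$, so $\dim\mathcal H_{g,\beta}$ is computable (it underlies the hyperelliptic curve invariants of abelian surfaces discussed in \cite{BOPY}), and the forgetful map from $\mathcal H_{g,\beta}$ to the moduli $\mathcal A$ of polarized abelian surfaces is dominant, its generic fibre over $B$ being the locus of genus-$g$ hyperelliptic curves in $|\beta|$. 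Inside $\mathcal H_{g,\beta}$ let $Z_{1}$ be the locus where $\bar\phi$ fails to be an immersion at some point of $\bar\phi^{-1}(E)$, and $Z_{2}$ the locus where $\bar\phi^{-1}(E)$ is strictly larger than the set of branch points; both are closed, and, away from lower strata, each is cut out by the vanishing of a section of a line bundle on the universal $\PP^{1}$, hence has codimension $\le1$. The heart of the argument is to prove that the codimension is exactly $1$; granting this, a generic polarized abelian surface, and then a generic curve of class $\beta$ on it, produces an $f$ for which the conjecture holds.

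\emph{Step 3: positivity of the codimension.} For $Z_{2}$ one uses that a rational curve on a surface meets a fixed point only when forced to: it suffices to exhibit a single triple $(B,C,f)$ with $\bar\phi^{-1}(E)$ equal to the set of branch points, and in small genus the curves $\Gamma_{i}$ of Proposition \ref{prop:The-curve-is hyperelliptic}, whose only singularity is the point of multiplicity $2(2k+1)=2g+2$ at which all Weierstrass points collide, provide such seeds, a deformation then propagating the conclusion to neighbouring classes. For $Z_{1}$ one must construct a first-order deformation of $(C,f)$ staying in the hyperelliptic locus (that is, respecting $f\circ\iota=[-1]\circ f+c$) which makes the differential of $f$ nonzero at a chosen Weierstrass point; since $f^{*}H^{0}(\Omega_{B})$ is a two-dimensional subspace of $H^{0}(\omega_{C})\cong H^{0}(\PP^{1},\OO(g-1))$, this is the freedom to choose the two corresponding polynomials of degree $\le g-1$ so as to have no common zero at a prescribed branch point, which is available for $g\ge2$.

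\emph{The main obstacle} is precisely the deformation-theoretic input of Step 3. Unlike for curves on a fixed surface, one cannot move $\bar C$ freely in $|\beta|$: the hyperelliptic condition cuts the deformations down to those compatible with the anti-equivariance, and one needs the surjectivity, after passing to the $\iota$-eigenspaces, of suitable restriction maps from global sections of twists of $\omega_{C}$ (equivalently from $H^{1}$ of the normal sheaf of $f$) onto the local jet spaces at the $2g+2$ distinguished points. Controlling this interplay between the hyperelliptic structure and the forced locus $f^{-1}(S_c)$ is what makes the statement delicate --- and is why in full generality it is at present only a conjecture.
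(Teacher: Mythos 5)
The statement you have been asked to prove is Conjecture \ref{conj:(see-)}, and the paper does \emph{not} prove it: it is quoted as an open conjecture from Bryan--Oberdieck--Pandharipande--Yin (who attribute the surrounding ideas to Graber), and everywhere else in the paper it is used only as a hypothesis --- e.g.\ the bidouble cover $V$ is smooth \emph{if} the conjecture holds, equivalently if $A_{1}$ and $A_{1}'$ meet transversally, and the authors explicitly treat the non-transverse case separately. So there is no proof in the paper to compare yours against, and your own final sentence concedes that the statement ``is at present only a conjecture.'' What you have produced is a strategy sketch, not a proof, and it should not be presented as one.

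As for the sketch itself: Step 1 (anti-equivariance $f\circ\iota=[-1]\circ f+c$, reduction to an immersion statement for a rational curve on $\Km(B)$ meeting the exceptional $(-2)$-curve only in the branch points) is correct and is exactly the translation the paper makes when it says the conjecture means $A_{1}$ and $A_{1}'$ meet transversally. The genuine gap is Step 3, in two places. For $Z_{2}$ your proposed ``seed'' is circular: the curves $\Gamma$ of Proposition \ref{prop:The-curve-is hyperelliptic} are precisely the curves whose local structure at the multiplicity-$(4k+2)$ point is governed by the conjecture; the paper only establishes the multiplicity, and whether the $4k+2$ branches there are pairwise transverse (equivalently, whether $\bar\phi^{-1}(E)$ is reduced and equal to the branch divisor) is the open question, so you cannot use these curves to exhibit a point outside $Z_{2}$. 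For $Z_{1}$ you conflate ``the two degree-$(g-1)$ polynomials \emph{could} be chosen with no common zero at a prescribed branch point'' with ``the two polynomials \emph{determined by} $f$ have no common zero there'': the pair $(P_{1},P_{2})$ is fixed by $f$ up to the $\mathrm{GL}_{2}$-action on $H^{0}(\Omega_{B})$, which does not move their common zero locus, so what is needed is a surjectivity statement for the restriction of first-order deformations of $(C,f)$ \emph{inside the hyperelliptic, anti-equivariant locus} onto the relevant jet spaces at the Weierstrass points --- and that is exactly the deformation-theoretic input you acknowledge you do not have. Until that is supplied, nothing is proved.
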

In our situation, that Conjecture means that the rational curves $A_{1}$
and $A_{1}'$ meet transversally. Indeed if they meet at a point tangentially
with order $m\geq2$, then the curve above $A_{1}'$ has a $\qa_{m-1}$
singularity. If $m$ is even, there is no branch points above that
singular point, and thus there are points $p,\iota(p)$ (with $p$
non-Weierstrass) which are mapped to the same point by $f$. If $m$
is odd and $>1$, then the curve $C$ above $A_{1}'$ has a singularity
$\qa_{m-1}$ of type ``cusp'', the differential of its normalization
is $0$. 

Construction of (nodal or smooth) rational curves on K3 surfaces is
an important problem, see e.g. \cite[Chapter 13]{Huyb} for a discussion.
The existence of two smooth rational curves $C_{1},C_{2}$ intersecting
transversely and such that $C_{1}+C_{2}$ is a multiple $nH$ of a
polarization $H$ is also a key point for obtaining the existence
of an integer $n$ such that there exists an integral rational curve
in $|nH|$, see \cite[Chapter 13, Theorem 1.1]{Huyb} and its proof.

\subsection{Invariants of the bidouble covers associated to the special configuration\label{subsec:Invariants-of-the}}

Let us define 
\[
D_{1}=A_{1}',\,D_{2}=A_{1},\,D_{3}=\sum_{i=2}^{16}A_{j}.
\]
By Nikulin results, the divisors $\sum_{i=2}^{16}A_{j}+A_{1}$ and
$\sum_{i=2}^{16}A_{j}+A_{1}'$ are $2$-divisible and therefore there
exists $L_{1},L_{2},L_{3}$ such that 
\[
2L_{i}=D_{j}+D_{k}
\]
for $\{i,j,k\}=\{1,2,3\}$. Each $L_{i}$ defines a double cover 
\[
\pi_{i}:\tilde{B_{i}}\to X
\]
branched over $D_{j}+D_{k}$ (here $\tilde{B}_{1}=\tilde{B}$). For
$i=1,2$, above the $16$ $(-2)$-curves of the branch locus of $\pi_{i}:\tilde{B_{i}}\to X$
there are $16$ $(-1)$-curves. Let $\tilde{B}_{i}\to B_{i}$ be the
contraction map, so that the surface $B_{i}$ ($i=1,2$) is an Abelian
surface. \\
The divisors $D_{i},L_{i},\,i\in\{1,2,3\}$ are the data of a bi-double
cover 
\[
\pi:V\to X
\]
which is a $(\ZZ/2\ZZ)^{2}$-Galois cover of $X$ branched over the
curves $A_{1}',\,A_{i},\,i\geq1$. By classical formulas, the surface
$V$ has invariants
\[
\begin{array}{c}
\chi(O_{V})=4\cdot2+\frac{1}{2}\sum L_{i}^{2}=k\\
K_{V}^{2}=(\sum L_{i})^{2}=8k-30.\quad
\end{array}
\]
The surface $V$ contains $30$ $(-1)$-curves, which are above the
$15$ curves $A_{i},\,i>1$. The surface $V$ is smooth if and only
if the intersection of $A_{1}$ and $A_{1}'$ is transverse, i.e.
if Conjecture \ref{conj:(see-)} holds. Let us suppose that this is
indeed the case, then one has moreover the formula 
\[
p_{g}(V)=p_{g}(X)+\sum h^{0}(X,L_{i}).
\]
The space $H^{0}(X,L_{i})$ is $0$ for $i=1,2$ because the double
covers branched over $D_{2}+D_{3}$ or $D_{1}+D_{3}$ are Abelian
surfaces $B_{i}$ ($i=1,2$) and $1=p_{g}(B_{i})=p_{g}(X)+h^{0}(X,L_{i})\geq1$.
It remains to compute $h^{0}(X,L_{3})$. The divisor $L_{3}=A_{1}+A_{1}'$
is big and nef (see section \ref{sec1:Two-Nikulin-configurations}).
By Riemann-Roch, one has
\[
\chi(L_{3})=\frac{1}{2}L_{3}^{2}+2=k+2.
\]
By Serre duality and Mumford vanishing Theorem, $h^{1}(L_{3})=h^{1}(L_{3}^{-1})=0$.
Moreover $h^{2}(L_{3})=h^{0}(-L_{3})=0$, thus $h^{0}(L_{3})=k+2$
and therefore $p_{g}(V)=k+3$. Let $V\to Z$ be the blow-down map
of the $30$ $(-1)$-curves on $V$ which are above the $15$ $(-2)$-curves
$A_{i},\,i>1$ in $X$. We thus obtain:
\begin{prop}
Suppose that $A_{1}$ and $A_{1}'$ intersect transversally. The surface
$Z$ has general type and its invariants are 
\[
\chi=k,\,K_{Z}^{2}=8k,\,p_{g}(Z)=k+3,\text{ and }q=4.
\]
\end{prop}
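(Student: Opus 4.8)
The plan is to read off the invariants of $Z$ from those of the bi-double cover $V$, which were computed above, and then to rule out Kodaira dimension $\leq 1$ using the Enriques--Kodaira classification.

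First I would recall that, under the transversality hypothesis, $V$ is smooth and the preceding discussion gives $\chi(\OO_V)=k$, $K_V^2=8k-30$ and $p_g(V)=k+3$. The morphism $V\to Z$ contracts precisely the $30$ disjoint $(-1)$-curves lying over the fifteen curves $A_i$, $i>1$: each such $A_i$ is rational and disjoint from the other two branch divisors $D_1=A_1'$ and $D_2=A_1$, so over it the cover $V\to X$ has two rational components, each mapping isomorphically onto $A_i$ and with self-intersection $-1$. Since $\chi(\OO)$, $p_g$ and $q$ are unchanged under blow-down while $K^2$ increases by $1$ at each contracted $(-1)$-curve, I obtain
\[
\chi(\OO_Z)=\chi(\OO_V)=k,\qquad p_g(Z)=p_g(V)=k+3,\qquad K_Z^2=K_V^2+30=8k,
\]
and hence $q(Z)=1-\chi(\OO_Z)+p_g(Z)=1-k+(k+3)=4$ (as a cross-check, $q(Z)=q(V)=h^1(\OO_X)+\sum_i h^1(L_i^{-1})=0+2+2+0=4$, the two contributions $2$ coming from $q(B_1)=q(B_2)=2$ since $B_1,B_2$ are Abelian surfaces).

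To see that $Z$ is of general type I would argue from the classification of surfaces, using $K_Z^2=8k>0$ and $q(Z)=4>0$. If $Z$ had Kodaira dimension $-\infty$ it would be rational or ruled over a curve of genus $b\geq1$; the rational case is excluded because then $q(Z)=0$, and in the ruled case $b=q(Z)=4$, so $Z$ dominates a relatively minimal ruled surface whose canonical class has square $8(1-b)=-24$, whence $K_Z^2\leq -24<0$, a contradiction. If $Z$ had Kodaira dimension $0$ or $1$, its minimal model would satisfy $K^2=0$, so again $K_Z^2\leq0$, contradicting $K_Z^2=8k>0$. Therefore $Z$ has Kodaira dimension $2$.

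The computation is mostly bookkeeping once the invariants of $V$ are in hand; the steps that require a little care are the identification of the $30$ contracted $(-1)$-curves (needed to legitimate $V\to Z$ and the equality $K_Z^2=K_V^2+30$), which uses that the $A_i$, $i>1$, are rational and disjoint from $D_1$ and $D_2$, and the final appeal to the classification to exclude Kodaira dimension $\leq1$. I expect this last point to be the main (though routine) obstacle.
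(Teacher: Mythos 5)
Your proposal is correct and follows essentially the same route as the paper: read off $\chi$, $K^2$, $p_g$ from the bidouble-cover computation for $V$, identify the $30$ disjoint $(-1)$-curves over the $A_i$, $i>1$, and contract them to get $Z$, with $q=1-\chi+p_g=4$. The only addition is your explicit classification argument for general type (using $K_Z^2>0$ and $q=4$ to exclude $\kappa\leq 1$), which the paper leaves implicit (it instead points to the generically finite rational map $Z\dashrightarrow B_1$ onto an Abelian surface); your argument is correct and fills that small gap.
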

The surface $Z$ is minimal as we see by using the rational map of
$Z$ onto the Abelian surface $B_{1}$.
\begin{rem}
The surface $Z$ satisfies 
\[
c_{1}^{2}=2c_{2}=8k.
\]
Among surfaces with $c_{1}^{2}=2c_{2}$ there are surfaces whose universal
covers is the bi-disk $\HH\times\HH$. For $k=1$, it turns out that
$Z$ is the product of two genus $2$ curves, thus its universal cover
is $\HH\times\HH$. For $k=2$, we obtain the so-called Schoen surfaces,
whose universal cover is not $\HH\times\HH$ (see \cite{CMR}, \cite{RRS}). 
\end{rem}
Let $(W,\omega)$ be a smooth projective algebraic variety of dimension
$2n$ over $\CC$ equipped with a holomorphic $(2,0)$-form of maximal
rank $2n$. Let us recall that a $n$ dimensional subvariety $Z\subset W$
is called Lagrangian if the restriction of $\omega$ to $Z$ is trivial.
We remark that
\begin{prop}
The surface $Z$ is a Lagrangian surface in $B_{1}\times B_{2}$.
\end{prop}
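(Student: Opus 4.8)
The plan is to exhibit a holomorphic $2$-form on $B_1\times B_2$ of maximal rank whose restriction to $Z$ vanishes, which is the defining property of a Lagrangian surface in a $4$-dimensional holomorphic symplectic setting. First I would note that $Z$ carries two independent fibration-free maps to Abelian surfaces: by construction of the bi-double cover, the double cover $\tilde B_2\to X$ branched over $D_1+D_3=A_1'+\sum_{i\geq 2}A_i$ has minimal model the Abelian surface $B_2$, and likewise the double cover $\tilde B_1\to X$ branched over $D_2+D_3=A_1+\sum_{i\geq 2}A_i$ has minimal model $B_1$. The composed maps $V\to \tilde B_i\to B_i$ factor through the blow-down $V\to Z$ (since the $(-1)$-curves contracted by $V\to Z$ lie over the $A_i$, $i>1$, which are in the branch locus of both $\pi_1$ and $\pi_2$, hence are contracted in each $B_i$), giving morphisms $g_i:Z\to B_i$ for $i=1,2$. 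These combine into a morphism $g=(g_1,g_2):Z\to B_1\times B_2$.

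Next I would record the holomorphic symplectic form on $W:=B_1\times B_2$: since each $B_i$ is an Abelian surface, $H^0(B_i,\Omega^2_{B_i})\cong\CC$, generated by a nowhere-vanishing $2$-form $\omega_i$; then $\omega:=\mathrm{pr}_1^*\omega_1+\mathrm{pr}_2^*\omega_2$ is a holomorphic $2$-form on the fourfold $W$ of maximal rank $4$ (it is symplectic). To show $g(Z)$ is Lagrangian it suffices to check $g^*\omega=0$ in $H^0(Z,\Omega^2_Z)\cong H^0(Z,K_Z)$, i.e. that $g_1^*\omega_1 = -\,g_2^*\omega_2$ as elements of $H^0(Z,K_Z)$. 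I would argue this via the covering structure over $X$: both $g_i^*\omega_i$ pull back, on the bi-double cover $V$, from the single double-cover data over $X$, and since $p_g(X)=1$ while the relevant eigenspace pieces $H^0(X,L_i)=0$ for $i=1,2$ (as shown in the computation of $p_g(V)$ just above), the classes $g_1^*\omega_1$ and $g_2^*\omega_2$ both come from (multiples of) the same generator of the "$L_3$-part" $H^0(X,K_X\otimes L_3)$-type contribution; tracking the $(\ZZ/2\ZZ)^2$-characters shows they land in the same one-dimensional eigenspace of $H^0(V,K_V)$ and hence are proportional, and the sign works out so that $g_1^*\omega_1+g_2^*\omega_2=0$ on $Z$. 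Equivalently, and perhaps more cleanly, I would use that $g^*\omega$ is an anti-invariant (under one of the involutions) holomorphic $2$-form whose would-be space of origin on $X$ is $H^0(X,\Omega^2_X\otimes\text{(a nontrivial $2$-torsion twist)})=0$, forcing $g^*\omega=0$.

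Finally I would check that $g:Z\to W$ is generically injective (so that the image is genuinely a Lagrangian \emph{surface} and not something lower-dimensional), which follows because $g_1:Z\to B_1$ is the resolved bi-double cover restricted appropriately and already $V\to B_1$ is generically $2:1$, while the extra $g_2$-component separates the two sheets — concretely, the two involutions of the Galois group $(\ZZ/2\ZZ)^2$ are distinguished by their actions compatible with $g_1$ and $g_2$ respectively, so no nontrivial element of the Galois group fixes a general fiber of $g$. The main obstacle, and the step requiring the most care, is the vanishing $g^*\omega=0$: one must correctly match the $(\ZZ/2\ZZ)^2$-eigenspace decomposition of $H^0(V,K_V)$ with the three double covers $\pi_i$ and verify that the forms $g_i^*\omega_i$ sit in a common one-dimensional eigenspace with opposite signs, rather than in independent summands. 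Once that character bookkeeping is done, triviality of $\omega|_Z$ is immediate, and minimality of $Z$ together with the invariants already computed confirm it is a surface of general type carrying this Lagrangian structure.
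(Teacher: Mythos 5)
Your route is genuinely different from the paper's. The paper does not compute anything: it observes that the data of the two Kummer structures $\Km(B_{1})$ and $\Km(B_{2})$ on the same K3 surface, together with the identity map, is precisely the input of the Bogomolov--Tschinkel construction, and it quotes \cite[Section 3]{BT} for the conclusion that the bi-double cover sits in $B_{1}\times B_{2}$ as a Lagrangian surface. A direct verification along your lines is a legitimate alternative and would make the statement self-contained; your construction of $g=(g_{1},g_{2}):Z\to B_{1}\times B_{2}$, the reduction to showing $g_{1}^{*}\omega_{1}=-g_{2}^{*}\omega_{2}$ in $H^{0}(Z,K_{Z})$, and the check of generic injectivity are the right skeleton. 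What the paper's approach buys is brevity at the cost of outsourcing the geometry; what yours buys is an explicit identification of the symplectic form that vanishes on $Z$.

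However, the character bookkeeping --- the step you yourself flag as the delicate one --- is wrong as written, in two places. First, $g_{i}^{*}\omega_{i}$ does not come from the $L_{3}$-eigenspace $H^{0}(X,K_{X}\otimes L_{3})$: the $2$-form $\omega_{i}$ on the Abelian surface $B_{i}$ is invariant under $[-1]$ (since $(-dz_{1})\wedge(-dz_{2})=dz_{1}\wedge dz_{2}$), so its pullback to $V$ is invariant under the covering involution of $\tilde{B}_{i}\to X$ as well as under the deck group of $V\to\tilde{B}_{i}$, hence under all of $(\ZZ/2\ZZ)^{2}$. Both $g_{1}^{*}\omega_{1}$ and $g_{2}^{*}\omega_{2}$ therefore lie in the invariant summand $\pi^{*}H^{0}(X,K_{X})$, i.e.\ they are multiples of the pullback of the K3 form $\omega_{X}$; since $p_{g}(X)=1$ this is the cleanest reason they are proportional. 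Second, your ``more cleanly'' variant fails for the same reason: $g^{*}\omega=g_{1}^{*}\omega_{1}+g_{2}^{*}\omega_{2}$ is Galois-invariant, not anti-invariant, so it lives in a one-dimensional (not zero) eigenspace and its vanishing is not automatic. What closes the argument is a normalization: both $g_{i}^{*}\omega_{i}$ are nonzero because $g_{i}$ is generically finite onto $B_{i}$ and $\omega_{i}$ is nowhere vanishing, so $g_{1}^{*}\omega_{1}=c\,g_{2}^{*}\omega_{2}$ for some $c\in\CC^{*}$; replacing $\omega_{2}$ by $-c\,\omega_{2}$, the form $\mathrm{pr}_{1}^{*}\omega_{1}+\mathrm{pr}_{2}^{*}\omega_{2}$ is still of maximal rank on $B_{1}\times B_{2}$ and now restricts to zero on $Z$, which is exactly what the definition of Lagrangian requires. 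With that correction your proof goes through.
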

\begin{proof}
In \cite{BT}, Bogomolov and Tschinkel associate a Lagrangian surface
to the data of Kummer surfaces $S_{1}=\Km(A_{1}),S_{2}=\Km(A_{2})$
and a $K3$ surface $S$ such that there is a rational map $S\to S_{i},$
$i=1,2$. \\
In our situation, we take $S_{1}=S_{2}=S=\Km(B)$, we consider the
Kummer structure $\Km(B_{1})$ for $S_{1}$ and the Kummer structure
$\Km(B_{2})$ (see also Remark \ref{rem:To-the-pair}) for $S_{2}$,
and the identity map for $S\to S_{i}$. \\
According to \cite[Section 3]{BT}, the bi-double cover $Z$ is a
sub-variety of $B_{1}\times B_{2}$ which is Lagrangian. 
\end{proof}

Let us now discuss what is happens if we do not make assumption on
the transversality of the intersection of $A_{1}$ and $A_{1}'$.
Let us denote by $\mathbb{A}_{m}$ a surface singularity with germ
\[
\{x^{m+1}=y^{2}+z^{2}\}
\]
and by $\qa_{m}$ a curve singularity with germ $\{x^{m+1}=y^{2}\}$.
\\
Since $A_{1},A_{1}'$ are smooth, the singularities of $A_{1}+A_{1}'$
are of type $\qa_{2m-1}$, $m>0$. Let $s$ be a $\qa_{2m-1}$-singularity
of $A_{1}+A_{1}'$. Recall that $\tilde{B}_{1}$ is the cover of $X$
branched over $\sum_{i=1}^{16}A_{i}$. The curve singularity above
$s$ in $\pi_{1}^{*}A_{1}'\subset\tilde{B}_{1}$ is a $\frak{a}_{m-1}$
singularity (see e.g. \cite[Table 1, P. 109]{BPVdV}). \\
Thus above the singularity $s$ of type $\qa_{2m-1}$ of $A_{1}+A_{1}'$,
the surface $V$ has a singularity of type $\mathbb{A}_{m-1}$, (where
in fact a $\mathbb{A}_{0}$ (resp. $\frak{a}_{0}$) point is a smooth
point). \\
The singularities $\mathbb{A}_{m}$ are $ADE$ singularities and by
the Theorem of Brieskorn on simultaneous resolution of singularities,
they do not change the values of $K^{2}$ , $\chi$ and $p_{g}$ of
the surface $\tilde{V}$ which is the minimal resolution of $V$ (we
consider the two successive double covers $V\to\tilde{B}_{1}$ and
$\tilde{B}_{1}\to X$). \\
Thus the surface $Z$ obtained by taking the minimal desingularisation
of $V$ and the contraction of the $30$ exceptional curves has the
same invariants $\chi(Z)$, $K_{Z}^{2}$ and $p_{g}(Z)$ as if the
intersection of $A_{1}$ and $A_{1}'$ was transverse. We observe
that the image of the natural map $Z\to B_{1}\times B_{2}$ is also
a Lagrangian surface by \cite[Section 3]{BT}.

Let $\a_{m}$ be the number of $\frak{a}_{2m-1}$ singularities on
$A_{1}+A_{1}'$. Using Miyaoka's bound on the number of quotient singularities
on a surface of general type (here to be the surface $B_{3},$ the
double cover of $X$ branched over $A_{1}+A_{1}'$), one gets: 
\[
\sum(n-\frac{1}{n})\a_{n}\leq\frac{4}{3}k.
\]
For $k=1$, a configuration of $6\frak{a}_{1}$ singularities on $A_{1}+A_{1}'$
is the only possibility. For $k=2$, the possibilities are 
\[
10\frak{a}_{1},\,8\frak{a}_{1}+\frak{a}_{3},\,7\frak{a}_{1}+\frak{a}_{5},
\]
but we know from explicit computations in \cite{RRS} that for a generic
Abelian surface polarized by $M$ with $M^{2}=6$, the singularities
of $A_{1}+A_{1}'$ are $10\frak{a}_{1}$. For $k=3$ the possibilities
are 
\[
14\frak{a}_{1},\,12\frak{a}_{1}+\frak{a}_{3},\,10\frak{a}_{1}+2\frak{a}_{3},\,11\frak{a}_{1}+\frak{a}_{5},\,10\frak{a}_{1}+\frak{a}_{7}.
\]

\subsection{The $H$-constant of the curve $\protect\G$}

Let $X$ be a surface, $\mathcal{P}$ be a non-empty finite set points
on $X$ and let $\bar{X}\to X$ be the blow-up of $X$ at $\mathcal{P}$.
For a curve $C$ let $\bar{C}_{\mathcal{P}}$ be the strict transform
of $C$ on $\bar{X}$. The $H$-constant of $C$ is defined by 
\[
H(C)=\inf_{\mathcal{P}}\frac{(\bar{C}_{\mathcal{P}})^{2}}{\#\mathcal{P}}
\]
and the $H$-constant of $X$ is $H(X)=\inf_{C}H(C)$, where the infimum
is taken over reduced curves. The $H$-constants have been introduced
for studying the bounded negativity Conjecture, which predicts that
there exists a bound $b_{X}$ such that for any reduced curve $C$
on $X$, one has $C^{2}\geq b_{X}$.

Let $A$ be the generic Abelian surface polarized by $M$ with $M^{2}=k(k+1)$
and let $\Gamma$ be the curve with a unique singularity which is
of multiplicity $4k+2$ and is in the numerical equivalence class
of $4M$. One computes immediately 
\[
H(\G)=\G^{2}-(4k+2)^{2}=-4.
\]
For the moment, one do not know curves on Abelian surfaces which have
$H$-constants lower than $-4$. We use these curves in a more thorough
study of curves with low $H$-constants in \cite{RLow}.

\vspace{5mm}
\noindent Xavier Roulleau,
\\Aix-Marseille Universit\'e, CNRS, Centrale Marseille,
\\I2M UMR 7373,  
\\13453 Marseille, France
\\ \email{Xavier.Roulleau@univ-amu.fr}
\urladdr{http://www.i2m.univ-amu.fr/perso/xavier.roulleau}
\vspace{0.3cm} \\
%
\noindent 
\author{Alessandra Sarti} \\
\address{Laboratoire de Math\'ematiques et Applications, UMR CNRS 7348, \\	
		Universit\'e de Poitiers, T\'el\'eport 2,\\ Boulevard Marie et Pierre Curie, 		\\
	86962 FUTUROSCOPE CHASSENEUIL, France}\\ 
\email{sarti@math.univ-poitiers.fr} \\
\vspace{2mm}
\urladdr{http://www-math.sp2mi.univ-poitiers.fr/~sarti/}
\end{document}